\documentclass[12pt, reqno]{amsart}
\usepackage{amsthm}
\usepackage{amsmath,amssymb,amsthm,mathrsfs,amscd}
\usepackage{MnSymbol}
\usepackage{color}
\usepackage[all]{xy}
\usepackage[english]{babel}
\usepackage{graphicx}
\usepackage[latin1]{inputenc}
\usepackage[T1]{fontenc}
\usepackage[final=true]{hyperref}
\usepackage{tikz}
\usetikzlibrary{patterns}
\usetikzlibrary{arrows}
\usepackage{rotating}
\usepackage{cases}
\usepackage{xkeyval}
\usepackage{faktor}

\newtheorem{theorem}{Theorem}
\newtheorem{theorem*}{Theorem}
\newtheorem{lemma}{Lemma}
\newtheorem{proposition}{Proposition}
\newtheorem{corollary}{Corollary}

\newtheorem{remark}{Remark}
\newtheorem{conjecture}{Conjecture}
\newtheorem{definition}{Definition}

\newcommand{\ii}{{\bf i}}
\newcommand{\Hom}{Hom}
\newcommand\restr[2]{{
  \left.\kern-\nulldelimiterspace 
  #1 
  \vphantom{\big|} 
  \right|_{#2} 
  }}
  \makeatletter
\DeclareRobustCommand*{\mfaktor}[3][]
{
   { \mathpalette{\mfaktor@impl@}{{#1}{#2}{#3}} }
}
\newcommand*{\mfaktor@impl@}[2]{\mfaktor@impl#1#2}
\newcommand*{\mfaktor@impl}[4]{
   \settoheight{\faktor@zaehlerhoehe}{\ensuremath{#1#2{#3}}}%
   \settoheight{\faktor@nennerhoehe}{\ensuremath{#1#2{#4}}}%
      \raisebox{-0.00\faktor@zaehlerhoehe}{\ensuremath{#1#2{#3}}}%
      \mkern-4mu\diagdown\mkern-5mu%
      \raisebox{0.00\faktor@nennerhoehe}{\ensuremath{#1#2{#4}}}%
}
\newcommand{\sym}[2]{\{ #1 , #2\}}

\frenchspacing
\begin{document}

	\title[Redundancy \& multiplicities]{Redundancy in string cone inequalities and multiplicities in potential functions on cluster varieties}

	\author{Gleb Koshevoy}
	\address{Institute of Information Transmission Problems Russian Academy of Sciences, National Research University Higher School of Economics, Russian Federation}
	\email{koshevoyga@gmail.com}
	
	\author{Bea Schumann}
	\address{Mathematical Institute, University of Cologne}
	\email{bschuman@math.uni-koeln.de}

	\begin{abstract} 
		We study defining inequalities of string cones via a potential function on a reduced double Bruhat cell. We give a necessary criterion for the potential function to provide a minimal set of inequalities via tropicalization and conjecture an equivalence.
		\end{abstract}
	
	\keywords{Cluster Algebras; Quantum Groups; Canonical Bases; String Polytopes}
	
	\maketitle
	\section*{Introduction}
Let $\mathfrak{g}$ be simple simply laced complex Lie algebra. To every reduced expression $\ii$ of the longest element $w_0$ of the Weyl group of $\mathfrak{g}$ is associated a polyhedral cone $\mathcal{C}_{\ii}\subset \mathbb{R}^N$ called the string cone. Here $N$ is the length of $w_0$. The string cones arise in many different contexts, e.g. they are closely connected to the dual canonical basis of the universal enveloping algebra of the negative part of $\mathfrak{g}$ (\cite{BZ93}), may be seen as generalisations of Gelfand-Tsetlin cones (\cite{Lit}), play a great role in toric degenerations of flag varieties (\cite{FFL}) and are closely related to tensor product multiplicities of representations of $\mathfrak{g}$ (\cite{BZ}).

This paper deals with the problem of determining a minimal set of inequalities for $\mathcal{C}_{\ii}$, i.e. describing the facets of $\mathcal{C}_{\ii}$. The way we approach this is as follows. If $\mathbf{j}$ is another reduced word of $w_0$ there is a piecewise linear bijection $\Psi_{\mathbf{j}}^{\mathbf{i}}:\mathcal{C}_{\mathbf{i}} \rightarrow \mathcal{C}_{\mathbf{j}}.$ The cone $\mathcal{C}_{\ii}$ consists of all $t\in \mathbb{R}^N$ such that for any reduced word $\ii'$ the last coordinate of $\Psi_{\mathbf{j}}^{\mathbf{i}}(t)$ is non-negative. This fact was used in \cite{GKS20} to define a function $\varsigma_{\ii}\in \mathbb{C}[x^{\pm}_j\mid j \in \{1,2,\ldots,N\}]$ (see Definition \ref{varsigma}) such that $\mathcal{C}_{\ii}$ is given by all $t\in \mathbb{R}^N$ with $[\varsigma_{\ii}]_{trop}(t)\ge 0$. Here trop means the tropicalization as defined in Section \ref{trop}. We prove the following sufficient criterion.

\begin{theorem}[Theorem \ref{nomulti}] If the exponents of all variables in $\varsigma_{\mathbf{i}}$ have absolute value less or equal to $1$, then the set of inequalities given by $[\varsigma_{\mathbf{i}}]_{trop}$ is non-redundant.
\end{theorem}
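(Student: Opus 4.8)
The plan is to tropicalize explicitly, recast non-redundancy as a statement about extreme rays of the dual cone, and then use the hypothesis on the exponents to produce a separating certificate for each inequality. First I would expand the potential of Definition~\ref{varsigma} as a Laurent polynomial $\varsigma_{\mathbf{i}} = \sum_{m} c_m\,x^{a_m}$ with pairwise distinct exponent vectors $a_m \in \mathbb{Z}^N$ and $c_m \neq 0$, so that by Section~\ref{trop} its tropicalization is the pointwise minimum $[\varsigma_{\mathbf{i}}]_{trop}(t) = \min_m \langle a_m, t\rangle$. The description recorded before the theorem then reads $\mathcal{C}_{\mathbf{i}} = \bigcap_m \{t : \langle a_m, t\rangle \ge 0\}$. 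Since $\mathcal{C}_{\mathbf{i}}$ is a full-dimensional cone, the inequality attached to a fixed $m$ is redundant if and only if $a_m$ lies in the convex cone $\mathrm{cone}\{a_{m'} : m' \ne m\}$; by Farkas' lemma this fails --- i.e.\ the inequality is \emph{non-redundant} --- exactly when there is a certificate $t^\star \in \mathbb{R}^N$ with $\langle a_m, t^\star\rangle < 0$ and $\langle a_{m'}, t^\star\rangle \ge 0$ for all $m' \ne m$. The theorem thus reduces to producing such a $t^\star$ for every monomial, and this is where the entire difficulty lies.

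Second, I would clarify the role of the hypothesis. The bound $a_m \in \{-1,0,1\}^N$ first guarantees that distinct monomials span distinct rays, since a nonzero vector and a positive multiple of it cannot both have entries in $\{-1,0,1\}$; so no inequality is redundant for the trivial reason of repetition. More importantly, it is precisely the bound that is needed to block the decompositions that would otherwise create redundancy: an exponent of absolute value $\ge 2$ makes a vector splittable, as in $2e_j - e_k = (e_j - e_\ell) + (e_j + e_\ell - e_k)$, where the two summands again have entries in $\{-1,0,1\}$ and, when they occur among the $a_{m'}$, exhibit $a_m$ as a positive combination of others. I would therefore read off from Definition~\ref{varsigma}, using the identification of the monomials of $\varsigma_{\mathbf{i}}$ with the last-coordinate functionals of the maps $\Psi_{\mathbf{j}}^{\mathbf{i}}$, the structure of the positive and negative supports $S^+_m = \{j : a_{m,j} = 1\}$ and $S^-_m = \{j : a_{m,j} = -1\}$, and track how these supports evolve along the braid moves relating $\mathbf{i}$ to the words $\mathbf{j}$.

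The heart of the proof, and the step I expect to be the main obstacle, is the construction of $t^\star$ for a fixed $m$. The natural strategy is to charge a coordinate of $S^-_m$: choose $t^\star$ so that a single index $k \in S^-_m$ drives $\langle a_m, t^\star\rangle$ negative, while the entries of $t^\star$ on the positive supports keep every competing form $\langle a_{m'}, t^\star\rangle$ non-negative. If the negative support of each monomial were a singleton private to it, extremality would be immediate and the exponent bound unnecessary; the genuine difficulty is the monomials whose supports overlap. There one must argue that matching $a_m$ at a distinguished coordinate forces certain coefficients $\lambda_{m'} > 0$ in a hypothetical positive dependence, and that these coefficients then \emph{overshoot}, producing an entry of absolute value $> 1$ at some other coordinate and so contradicting that the combination must equal the $\pm 1$-vector $a_m$. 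I would organize this as an induction along the reduced word $\mathbf{i}$, maintaining as invariant that any positive dependence among the $a_{m'}$ reproducing $a_m$ forces such an overshoot; the delicate points will be the sign bookkeeping on overlapping supports and controlling, one braid move at a time, how the support of each monomial can contribute, which is exactly where the $\pm 1$ normalisation (and the simply-laced setting) is used.
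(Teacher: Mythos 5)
Your first paragraph sets up the right framework (tropicalization as a minimum of linear forms, non-redundancy as conic independence of the exponent vectors via Farkas), and you correctly locate where the difficulty lies. But the proof stops there: the ``heart'' is only a plan, and the plan as described would fail. The condition that all exponent vectors lie in $\{-1,0,1\}^N$ does \emph{not} by itself preclude positive dependencies among distinct vectors --- e.g.\ $(1,0)=\tfrac12(1,1)+\tfrac12(1,-1)$, where all three vectors are distinct and multiplicity-free. So your claim that the $\pm1$ bound ``is precisely the bound that is needed to block the decompositions'' cannot be established by sign bookkeeping on the string-coordinate exponents alone; some further structural input about which $\{-1,0,1\}$-vectors actually occur is indispensable, and your proposed induction on braid moves with an ``overshoot'' invariant is not carried out and gives no indication of what that input would be.

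The paper supplies exactly the two missing structural facts, and both require leaving the string coordinates. First, via the monomial torus isomorphism $\widehat{\text{CA}}_{\ii}$ of Proposition \ref{iso}, $\varsigma_{\mathbf{i},i}$ is identified with the restriction of the GHKK potential $W_i$ to an $\mathcal{X}$-cluster torus, and Proposition \ref{polynomial} (a consequence of the Fomin--Zelevinsky separation formula, positivity of $F$-polynomials and sign-coherence of $c$-vectors) shows that in these coordinates \emph{all} exponents are non-positive; combined with multiplicity-freeness, every exponent vector becomes a $\{0,-1\}$-vector, eliminating the mixed-sign phenomenon above. Second, Lemma \ref{prepare} shows every monomial of $W_i$ is divisible by the inverse of a fixed frozen variable $X_{i,m_i}^{-1}$ (and that monomials of $W_i$ and $W_j$, $j\ne i$, cannot interact). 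Evaluating a putative dependence $[\mathfrak{m}_0]_{trop}=\sum r_{\mathfrak{m}}[\mathfrak{m}]_{trop}$ at minus the corresponding basis vector forces $\sum r_{\mathfrak{m}}=1$, so the conic combination is convex; a convex combination of distinct $\{0,1\}$-vectors cannot again be a $\{0,1\}$-vector, which is the contradiction (Proposition \ref{nomult}). Without the passage to the cluster torus and the common frozen factor, your certificate $t^\star$ cannot be constructed in general, so the argument has a genuine gap rather than a fixable omission.
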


We can refine the above thanks to the observation that the Laurent-polynomial $\varsigma_{\mathbf{i}}$ naturally splits into a sum of Laurent polynomials $\varsigma_{\mathbf{i}}=\sum_{i\in I}\varsigma_{\mathbf{i},i}$ where $I$ is the index set of simple roots of $\mathfrak{g}$. 

\begin{proposition}[Proposition \ref{stringred}] The inequalities arising from $[\varsigma_{\mathbf{i},i}]_{trop}$ and $[\varsigma_{\mathbf{i},j}]_{trop}$ are independent, i.e. the set of inequalities given by $[\varsigma_{\mathbf{i}}]_{trop}$ is non-redundant if and only if the sets inequalities given by $[\varsigma_{\mathbf{i},i}]_{trop}$ are non-redundant for all $i\in I$. 
\end{proposition}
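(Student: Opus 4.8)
The plan is to translate the statement about redundancy of inequalities into a statement about conical dependencies among exponent vectors, and then to separate the blocks by a weight grading. Write $\varsigma_{\mathbf{i}} = \sum_{m} c_m x^m$ as a Laurent polynomial with $c_m > 0$, so that, via the fact that $[\varsigma_{\mathbf{i}}]_{trop}(t)$ is the minimum of the linear forms $\langle m, t\rangle$ over the monomials, each monomial $x^m$ contributes the single linear inequality $\langle m, t\rangle \ge 0$ on $t \in \mathbb{R}^N$; here $m \in \mathbb{Z}^N$ is the exponent vector. Denote by $M$ the set of all such exponent vectors, and by $M_i \subseteq M$ the exponent vectors of the monomials of $\varsigma_{\mathbf{i},i}$, so that $M = \bigcup_{i \in I} M_i$. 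By Farkas' lemma, a homogeneous inequality $\langle m, t\rangle \ge 0$ is implied by a finite family $\{\langle m', t\rangle \ge 0\}_{m' \in S}$ exactly when $m$ lies in the convex cone $\mathrm{cone}(S)$ generated by $S$. Thus "the set of inequalities given by $[\varsigma_{\mathbf{i}}]_{trop}$ is non-redundant" is equivalent to the purely geometric condition that no $m \in M$ lies in $\mathrm{cone}(M \setminus \{m\})$, and likewise for each $M_i$.

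With this reformulation the implication "$\Rightarrow$" is immediate: if some $M_i$ is redundant, say $m \in \mathrm{cone}(M_i \setminus \{m\})$ for some $m \in M_i$, then a fortiori $m \in \mathrm{cone}(M \setminus \{m\})$ since $M_i \setminus \{m\} \subseteq M \setminus \{m\}$, so $M$ is redundant. The content of the proposition is therefore the converse: any conical dependency in $M$ must in fact already be a conical dependency inside a single block $M_i$.

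To prove this localization I would use the natural weight grading on $\mathbb{C}[x_j^{\pm}]$ coming from the double Bruhat cell, under which each variable $x_k$ carries a weight $\mathrm{wt}(x_k)$ in the weight lattice $P$ and each monomial inherits a weight $\mathrm{wt}(m) \in P$. The key lemma, which is where the definition of $\varsigma_{\mathbf{i}}$ (Definition \ref{varsigma}) and of its splitting $\varsigma_{\mathbf{i}} = \sum_i \varsigma_{\mathbf{i},i}$ must be used, is that each component $\varsigma_{\mathbf{i},i}$ is \emph{homogeneous} of a fixed weight $\beta_i$, and that the weights $\{\beta_i\}_{i\in I}$ are linearly independent (indeed $\beta_i$ should be, up to normalization, the simple root $\alpha_i$, and the simple roots are linearly independent). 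In particular distinct blocks never share a monomial, so that $M = \bigsqcup_i M_i$ is a genuine partition and the inequality sets $[\varsigma_{\mathbf{i},i}]_{trop}$ are honestly disjoint.

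Granting the key lemma, the converse follows by pairing a conical dependency with the weight map. Suppose $m \in M_i$ and $m = \sum_{m' \in M \setminus \{m\}} \lambda_{m'} m'$ with all $\lambda_{m'} \ge 0$. Applying $\mathrm{wt}$ and grouping by block gives $\beta_i = \sum_{j \in I}\bigl(\sum_{m' \in M_j \setminus \{m\}} \lambda_{m'}\bigr)\beta_j$. By linear independence of the $\beta_j$ the coefficient of $\beta_j$ vanishes for $j \ne i$; since the inner sums are non-negative, this forces $\lambda_{m'} = 0$ for every $m' \notin M_i$. Hence $m \in \mathrm{cone}(M_i \setminus \{m\})$, i.e. the redundancy already occurs within block $i$, proving "$\Leftarrow$". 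I expect the main obstacle to be the key lemma itself: verifying from the explicit form of $\varsigma_{\mathbf{i},i}$ that each component is weight-homogeneous with linearly independent block weights. This is plausible because the splitting is indexed by the simple roots and each summand is expected to track a single simple-root direction of the potential, but it requires unwinding Definition \ref{varsigma} together with the torus action on the reduced double Bruhat cell.
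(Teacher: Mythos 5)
Your reduction via Farkas' lemma and the easy direction (a redundancy inside one block is a fortiori a redundancy of the whole system) are fine, and your overall strategy --- find a linear functional on exponent vectors that separates the blocks --- is exactly the right shape. The problem is that the key lemma you rest everything on is false: the components $\varsigma_{\mathbf{i},i}$ are \emph{not} weight-homogeneous for any nontrivial grading in general. This is visible in the paper's own $D_4$ example in Section \ref{exnonm}: after the monomial change of coordinates $\widehat{\text{CA}}_{\ii}$ of Proposition \ref{iso} (which identifies gradings on the two tori), the component $\restr{W_2}{\mathcal{X}_{\ii}}$ contains the monomials $X_9^{-1}$, $X_9^{-1}X_6^{-1}$, $X_9^{-1}X_8^{-1}$, $X_9^{-1}X_7^{-1}X_8^{-1}$ and $X_7^{-1}$ simultaneously; any grading making these all of the same weight forces $w_6=w_7=w_8=w_9=0$ and hence $\beta_2=0$, destroying the linear independence you need. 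The heuristic that each block ``tracks a simple-root direction'' is an $\mathcal{A}$-side (or non-reduced double Bruhat cell) intuition; on the \emph{reduced} cell the torus action that would furnish such a grading has been quotiented out.

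What is actually true, and what the paper uses (Lemma \ref{prepare} combined with Propositions \ref{polynomial} and \ref{iso}), is a weaker sign-pattern statement rather than homogeneity: for each $i$ there is a distinguished frozen cluster coordinate $X_{i,m_i}$ such that every monomial of the $i$-th block contains $X_{i,m_i}$ with exponent $\le -1$, no monomial of any other block involves $X_{i,m_i}$ at all, and (by Proposition \ref{polynomial}) all exponents of all monomials are $\le 0$. The linear functional $\ell_i(m)=-\langle e_{i,m_i},m\rangle$ is then $\ge 1$ on block $i$, $=0$ on every other block, and $\ge 0$ on all of $M$; applying $\ell_i$ to a conical dependency $m_0=\sum r_{\mathfrak{m}}\mathfrak{m}$ with $m_0$ in block $j\ne i$ gives $0\ge\sum_{\mathfrak{m}\in M_i}r_{\mathfrak{m}}$ and hence kills all coefficients from block $i$. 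Your ``group by block and compare coefficients'' step goes through verbatim with this functional in place of $\mathrm{wt}$; but establishing the sign pattern requires the cluster-mutation bookkeeping of Lemma \ref{prepare} (tracking which variables can appear under $\mathcal{X}$-mutation along the sequence to an optimized seed), not a torus action. As written, your proof has a genuine gap at its central step.
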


Proposition \ref{stringred} allows us to study each summand $\varsigma_{\mathbf{i},i}$ independently. We call $\varsigma_{\mathbf{i},i}$ \emph{multiplicity-free} if the exponents of all variables have absolute value less or equal to $1$ and thus the monomials of $\varsigma_{\mathbf{i},i}$ correspond to facets of the string cone via tropicalization. 

We provide several classes of examples of reduced words $\ii$ where $\varsigma_{\mathbf{i},i}$ is multiplicity-free. In Theorem \ref{simbraid} we show that this is the case if $\ii$ is simply-braided for $i$, a notion that was studied in \cite{SST}. Furthermore $\varsigma_{\mathbf{i},i}$ is multiplicity-free for all $i\in I$ if $\ii$ is a nice word in the sense of Littelmann \cite{Lit} by Theorem \ref{nicew}. 

In the case that $\omega_i$ is a minuscule weight of $\mathfrak{g}$ we show that $\varsigma_{\mathbf{i},i}$ is given via Berenstein-Zelevinsky's $\mathbf{i}$-trails in Section \ref{trails}. Theorem \ref{minusculenomulti} proves that $\varsigma_{\mathbf{i},i}$ is again multiplicity-free in that case. Thus, in particular, $\varsigma_{\mathbf{i},i}$ is always multiplicity-free for $\mathfrak{g}$ of type $A$. 

We conjecture that Theorem \ref{nomulti} is indeed also a necessary criterion.

\begin{conjecture}[Conjecture \ref{conjmu2}] The inequalities arising from $[\varsigma_{\mathbf{i},i}]_{trop}$ are non-redundant if and only if $\varsigma_{\mathbf{i},i}$ is multplicity-free.
\end{conjecture}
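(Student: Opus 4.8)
By Proposition~\ref{stringred} it suffices to treat a single summand $\varsigma_{\mathbf{i},i}$, and the implication ``multiplicity-free $\Rightarrow$ non-redundant'' is exactly Theorem~\ref{nomulti} applied to this summand. The entire content of Conjecture~\ref{conjmu2} therefore lies in the converse, which I would phrase contrapositively: \emph{if some variable appears in $\varsigma_{\mathbf{i},i}$ with an exponent of absolute value $\ge 2$, then one of the inequalities produced by $[\varsigma_{\mathbf{i},i}]_{trop}$ is redundant.} The first move is to translate redundancy into linear algebra. Write $\varsigma_{\mathbf{i},i}=\sum_m c_m x^m$ with $c_m\neq 0$; by the definition of tropicalization in Section~\ref{trop} each exponent $m$ contributes the inequality $\langle m,t\rangle\ge 0$, and the inequality attached to a fixed $m^\ast$ is redundant if and only if $m^\ast$ lies in the conical hull $\mathrm{cone}\{m: c_m\neq 0,\ m\neq m^\ast\}$ of the remaining exponents --- this is the dual-cone form of Farkas' lemma, applied modulo the lineality space of $\mathcal{C}_{\mathbf{i}}$. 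Thus the conjecture reduces to a statement purely about the Newton polytope of $\varsigma_{\mathbf{i},i}$: \emph{every exponent with an entry of absolute value $\ge 2$ is a non-negative combination of the remaining exponents of $\varsigma_{\mathbf{i},i}$.}

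The second move is to pin down the support of $\varsigma_{\mathbf{i},i}$ in a way that exposes where large exponents come from. Here the terminology is suggestive: the multiplicity-free words already identified --- simply-braided (Theorem~\ref{simbraid}), nice (Theorem~\ref{nicew}) and those with $\omega_i$ minuscule (Theorem~\ref{minusculenomulti}), in particular all words in type $A$ --- are cases in which no exponent of absolute value $\ge 2$ occurs, so the hard direction is vacuous for them and genuinely tests only words in types $D$ and $E$ whose $\varsigma_{\mathbf{i},i}$ fails to be multiplicity-free. I would use the $\mathbf{i}$-trail description of Section~\ref{trails} as a template and seek its extension beyond the minuscule case, so that each monomial of $\varsigma_{\mathbf{i},i}$ is indexed by a combinatorial object (a trail, or a path in the relevant crystal or Demazure module) whose exponent records how often it traverses each of the $N$ coordinate directions. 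In this picture an exponent entry $\ge 2$ records that the indexing object passes through a weight of multiplicity $\ge 2$, which is precisely the phenomenon that minuscule and type-$A$ fundamental weights avoid.

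The crux is then a \emph{splitting lemma}: an exponent $m^\ast$ with $\|m^\ast\|_\infty\ge 2$ can be written as $m^\ast=m'+m''$ (or, more generally, as a non-negative integer combination) with $m'$ and $m''$ again exponents occurring in $\varsigma_{\mathbf{i},i}$. The mechanism I would try to make precise is that a repeated traversal recorded by $m^\ast$ lets one cut the indexing object at the repetition into two shorter objects, each of which contributes its own monomial, with additive exponents. Granting this, the membership $m^\ast\in\mathrm{cone}\{m',m''\}$ is immediate and redundancy follows, while conversely the multiplicity-one objects are exactly the extremal (facet) exponents guaranteed by Theorem~\ref{nomulti}, so the two directions dovetail. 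It would be prudent to first verify the splitting lemma on the smallest genuinely non-multiplicity-free examples --- short non-nice words in type $D_4$ --- both to fix the correct combinatorial model and to confirm that the decomposing vectors really are monomials of $\varsigma_{\mathbf{i},i}$ and not merely interior lattice points of its Newton polytope.

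The step I expect to be the genuine obstacle is the splitting lemma in the generality required, for two reasons. First, one must show that the summands $m',m''$ \emph{actually appear} in $\varsigma_{\mathbf{i},i}$, which demands reduced-word-independent control of the support of the potential that the present trail and cluster descriptions supply only in special cases; a uniform crystal- or MV-polytope-theoretic model for the monomials of $\varsigma_{\mathbf{i},i}$ may be needed to carry this out. Second, the piecewise-linear bijections $\Psi^{\mathbf{i}}_{\mathbf{j}}$ from the Introduction, which give the alternative inequality descriptions, are only \emph{piecewise} linear, so they do not transport facets or redundancy automatically; leveraging them to reduce an arbitrary $\mathbf{i}$ to a computed one would itself require tracking which linearity chamber each inequality lives in. I would therefore expect a complete proof to rest on a new, uniform description of the support of $\varsigma_{\mathbf{i},i}$, with the splitting lemma proved once and for all inside that model, rather than on a case-by-case reduction through the wall-crossing maps.
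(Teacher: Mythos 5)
The statement you are asked to prove is stated in the paper only as a conjecture: the authors prove the ``multiplicity-free $\Rightarrow$ non-redundant'' direction (Theorem~\ref{nomulti}, via Proposition~\ref{nomult} and the positivity of exponents in Proposition~\ref{polynomial}) and offer a single computed example in Section~\ref{exnonm} as evidence for the converse. Your proposal likewise does not prove the converse: the reduction to Farkas' lemma and to Proposition~\ref{stringred} is fine and matches how the paper itself phrases redundancy, but everything then hinges on your ``splitting lemma,'' which you explicitly leave as the open obstacle. So at best this is a research plan, not a proof, and it should be presented as such.

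More seriously, the splitting lemma as you formulate it is contradicted by the paper's own example. You propose that a monomial $m^\ast$ with $\|m^\ast\|_\infty\ge 2$ splits as $m^\ast=m'+m''$ with $m',m''$ in the support, whence $m^\ast$ lies in their conical hull and \emph{that} inequality is redundant. In the $D_4$ computation of Section~\ref{exnonm} the opposite happens: the monomial $X_9^{-1}X_5^{-2}X_6^{-1}X_7^{-1}X_8^{-1}$, the one carrying the exponent of absolute value $2$, defines a facet, while the redundant inequality comes from the \emph{different} monomial $2X_9^{-1}X_5^{-1}X_6^{-1}X_7^{-1}X_8^{-1}$, whose exponent vector has all entries in $\{0,-1\}$ but which is the midpoint of the large-exponent monomial and $X_9^{-1}X_6^{-1}X_7^{-1}X_8^{-1}$. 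There are $27$ monomials and $26$ facets, so exactly one inequality is redundant, and it is not the one your mechanism predicts. This is precisely why the authors refine the statement to Conjecture~\ref{mult2}: redundancy is conjecturally detected by coefficients greater than $1$ (non-vertices of the Newton polytope, in the spirit of Fei's result cited in Remark~\ref{ffei}), not by the large exponent itself; the large exponent only signals that \emph{some other} monomial fails to be extremal. Any correct attack on the converse would have to explain why a non-multiplicity-free support forces a non-vertex exponent vector in the support, which is a genuinely different (and harder) combinatorial statement than the one you propose.
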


Finally in Section \ref{exnonm} we provide an example for which $\varsigma_{\mathbf{i},i}$ is not multiplicity-free. In this example there is exactly one Laurent monomial of $\varsigma_{\mathbf{i},i}$ whose tropicalization leads to a redundant inequality. We note that this monomial is the only monomial of $\varsigma_{\mathbf{i},i}$ with a coefficient greater than $1$. This leads us to the final conjecture given a criterion to determine the facets of the string cone.

\begin{conjecture}(Conjecture \ref{mult2}) An inequalities arising from the tropicalization of a monomial of $\varsigma_{\mathbf{i},i}$ is redundant if and only if the coefficient of this monomial is greater than $1$.
\end{conjecture}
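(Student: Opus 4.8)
The plan is to translate the statement into the language of the dual cone and then to match the two invariants attached to a monomial --- its integer coefficient and the redundancy of its inequality --- through a combinatorial model for $\varsigma_{\mathbf{i},i}$. Write $\varsigma_{\mathbf{i},i}=\sum_{m}c_{m}\,x^{a_{m}}$ with pairwise distinct exponent vectors $a_{m}\in\mathbb{Z}^{N}$ and positive integers $c_{m}$. By Proposition \ref{stringred} the inequalities coming from different indices are independent, so a monomial of $\varsigma_{\mathbf{i},i}$ is redundant in the full system $[\varsigma_{\mathbf{i}}]_{trop}$ exactly when it is redundant within the subsystem $\{\langle a_{m},t\rangle\ge 0\}$ attached to $i$ alone; this reduces everything to a single $i\in I$. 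Recalling the standard criterion for a conical description, the inequality indexed by $m$ is redundant precisely when $a_{m}\in\operatorname{cone}(\{a_{m'}\mid m'\neq m\})$; equivalently the non-redundant inequalities are indexed by the generators of the extreme rays of $\operatorname{cone}(\{a_{m}\})$. Thus the conjecture asserts that $a_{m}$ spans an extreme ray if and only if $c_{m}=1$.

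The first implication I would treat is the constructive one, $c_{m}>1\Rightarrow$ redundant. Suppose the exponent $a_{m}$ is realized by at least two distinct objects $\gamma_{1}\neq\gamma_{2}$ in the model (the $\mathbf{i}$-trails of Section \ref{trails} in the minuscule case, and the expansion coming from the potential on the reduced double Bruhat cell in general). The aim is to manufacture from this coincidence a relation $a_{m}=\sum_{m'\neq m}\lambda_{m'}a_{m'}$ with $\lambda_{m'}\ge 0$, placing $a_{m}$ inside the cone of the remaining exponents. In the $\mathbf{i}$-trail picture two trails with the same endpoints and the same total exponent differ by a sequence of elementary moves, and each such move should split the shared monomial into a product of two monomials that again occur in $\varsigma_{\mathbf{i},i}$; iterating the moves then produces the desired conical decomposition. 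I would first establish this in type $A$ and the minuscule setting, where by Theorem \ref{minusculenomulti} the relevant combinatorics is fully under control, and only afterwards bootstrap to arbitrary simply-laced $\mathfrak{g}$, reducing where possible to rank-two local configurations in the wiring diagram of $\mathbf{i}$.

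The converse, $c_{m}=1\Rightarrow$ non-redundant (equivalently, redundant $\Rightarrow c_{m}>1$), is where I expect the main obstacle to lie, because it is an enumerative lower bound: one must show that \emph{every} exponent lying in the cone of the others is hit by at least two objects of the model. This refines Theorem \ref{nomulti}, where the hypothesis that all exponents lie in $\{-1,0,1\}$ already forces the boundary lattice points of $\operatorname{New}(\varsigma_{\mathbf{i},i})$ to be extreme and simple; the difficulty is to remove the exponent hypothesis and produce the second object $\gamma_{2}$ directly from the redundancy relation. Here I would try to exploit the geometric origin of the coefficients: if, as is expected for these potentials, $c_{m}$ computes an Euler characteristic (or a multiplicity of a weight space) attached to $a_{m}$, then $c_{m}=1$ should correspond to the associated variety being a point, and the task becomes to identify this collapse with extremality of $a_{m}$. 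Failing a uniform argument, I would at least pin down the multiplicities on the extreme rays via the $\mathbf{i}$-trail formula and argue that non-extremal exponents acquire a genuine contribution from the splitting moves above.

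Finally I would assemble the two implications into the stated equivalence and check consistency with Conjecture \ref{conjmu2}: combining ``redundant $\Leftrightarrow c_{m}>1$'' over all monomials shows that the full system is non-redundant exactly when every coefficient equals $1$, so the refined statement would predict that $\varsigma_{\mathbf{i},i}$ has a monomial with a variable-exponent of absolute value $\ge 2$ if and only if it has a monomial with coefficient $\ge 2$. I would verify this equivalence between large exponents and large coefficients first on the single redundant monomial of the example in Section \ref{exnonm}, where both phenomena are visible simultaneously, and treat it as an independent structural check on the model before attempting it in general.
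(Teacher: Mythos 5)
The statement you are examining is Conjecture \ref{mult2}; the paper offers no proof of it, only the single supporting computation in Section \ref{exnonm} (the $D_4$ word $\ii=(2,1,4,2,3,2,4,2,1,2,3,4)$, where the unique coefficient-$2$ monomial of $\restr{W_2}{\mathcal{X}_{\ii}}$ is exactly the unique redundant one) and Remark \ref{ffei}, which reformulates the question in terms of $F$-polynomials and points to Fei's vertex-of-Newton-polytope criterion. Your proposal likewise does not prove the statement: it is a research plan in which both implications are left open. Your initial reductions are sound and consistent with the paper --- Proposition \ref{stringred} does isolate a single $i\in I$, and for a homogeneous full-dimensional system Farkas' lemma does identify redundancy of $\langle a_m,t\rangle\ge 0$ with $a_m\in\operatorname{cone}(\{a_{m'}\mid m'\ne m\})$, which is exactly the form of redundancy used throughout Section \ref{proofs}. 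But from that point on every step is conditional.

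Concretely: for the implication $c_m>1\Rightarrow$ redundant, you posit ``elementary moves'' that split a monomial realized by two distinct trails into a product of two other monomials of $\varsigma_{\mathbf{i},i}$, yielding a conical decomposition of $a_m$; no such moves are defined, and no reason is given why a coincidence of objects should produce nonnegative-combination witnesses among the \emph{other} exponents (in the paper's example the witness is $a_m=\tfrac12(a_{m_1}+a_{m_2})$ with $a_{m_1}$ containing an exponent $-2$, which is not obviously the output of any local move on trails). Moreover your proposed base case is empty: in type $A$ and for minuscule $\omega_i$, Theorem \ref{minusculenomulti} shows $\varsigma_{\mathbf{i},i}$ is multiplicity-free and the system is non-redundant, so there is no coefficient-$>1$ monomial there to test the implication on, and nothing to ``bootstrap'' from. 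For the converse ($c_m=1\Rightarrow$ non-redundant) you yourself flag the absence of an argument; the Euler-characteristic heuristic is in the spirit of Remark \ref{ffei} (coefficients of $F$-polynomials as Euler characteristics of quiver Grassmannians, and Fei's result that a monomial of an $F$-polynomial has coefficient $1$ iff its exponent is a vertex of the Newton polytope), but even granting Fei's conjecture one would still have to bridge the gap the paper explicitly leaves open, namely that extremality in the \emph{cone} generated by the exponents of $\restr{W_i}{\mathcal{X}_{\ii}}=X_k^{-1}F_k(X^{-1})$ is governed by vertices of the Newton \emph{polytope} of $F_k$. As it stands, your text is a reasonable plan of attack aligned with the paper's own speculation, but it should not be presented as a proof of the conjecture.
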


Note that this criterion cannot be seen merely in the tropicalization of $\varsigma_{\mathbf{i},i}$ since coefficients do not play any role there. However, it is visible in the tropicalization wether $\varsigma_{\mathbf{i},i}$ is multiplicity-free or not. It would be very interesting to find a conceptual explanation of our result. In Remark \ref{ffei} we suggest a relation to $F$-polynomials of cluster variables.

The proofs of Theorem \ref{nomulti} and Proposition \ref{stringred} are making use of the following fact proven in \cite{GKS20}. The function $\varsigma_{\mathbf{i}}$ is the pullback of the potential function of \cite{GHKK} on the big reduced double Bruhat cell (see Proposition \ref{iso}) by an isomorphism of tori. An important point here is that the potential function (expressed in appropriate torus coordinates) only has non-positive exponents (Proposition \ref{polynomial}). The results may be obtained from this using cluster combinatorics.

The paper is organized as follows. The first section introduces important notion related to reduced words and tropicalization. The second section deals with string cones and their defining inequalities and states our main results. The third section recalls the notion of $\mathcal{A}$- and $\mathcal{X}$-cluster varieties and potential functions in the sense of \cite{GHKK}. In Section $4$ we restrict ourself to the big reduced double Bruhat cell as specific cluster variety and recall how to obtain string cone inequalities via potential functions. The proofs of Theorem \ref{nomulti} and Proposition \ref{stringred} are obtained in the fifth section. The sixth section provides examples of reduced words for which our system of inequalities is non-redundant. In the final Section $7$ we provide an example of a redundancy corresponding to a multiplicity in $\varsigma_{\mathbf{i}}$.

\section*{Acknowledgement}
We would like to thank Volker Genz for insightful discussions. G. Koshevoy thanks the grant RSF 21-11-00283 for support. B. Schumann was supported by the SFB/TRR 191 'Symplectic Structures in Geometry, Algebra and Dynamics', funded by the DFG.

\section{Background}

\subsection{Notation}
For a positive integer $m\in \mathbb{Z}_{\ge 0}$ we denote by $[m]$ the set $\{1,2,\ldots,m\}$. Let $\mathfrak{g}$ be simple simply laced complex Lie algebra of rank $n$, $I:=[n]$, $C=(c_{i,j})_{i,j\in I}$ its Cartan matrix and $\mathfrak{h}\subset \mathfrak{g}$ a Cartan subalgebra.
We choose simple roots $\Delta^+=\{ \alpha_i \}_{i\in I}\subset \mathfrak{h}^*$ and simple coroots $\{\alpha^{\vee}_i\}_{i\in I}\subset\mathfrak{h}$ with $\alpha_i (\alpha^{\vee}_j)= c_{j,i}$. We denote by $\Delta^+\subset\mathfrak{h}^*$ the set of positive roots and by $\Delta$ the set of roots associated to $\{\alpha_i \mid i\in I \}$.

The fundamental weights $\{\omega_i\}_{i\in I}\subset\mathfrak{h}^*$ of $\mathfrak{g}$ are given by $\omega_i(\alpha^{\vee}_j)=\delta_{i,j}$. We denote by $P=\langle \omega_i \mid i\in[n] \rangle_{\mathbb{Z}}$ the weight lattice of $\mathfrak{g}$ and by $P^+=\langle \omega_i \mid i\in[n] \rangle_{\mathbb{Z}_{\ge 0}}\subset P$ the set of dominant weights.

The Langlands dual Lie algebra $^L\mathfrak{g}$ of $\mathfrak{g}$ is the simple, simply laced complex Lie algebra with Cartan matrix $C$, Cartan subalgebra $\mathfrak{h}^*$, simple roots $\{\alpha^{\vee}_i\}_{i\in I}$, simple coroots $\{\alpha_i\}_{i\in I}$ and $\alpha^{\vee}_i (\alpha_j)=c_{i,j}$.
The fundamental weights of $^L\mathfrak{g}$ are $\{\omega^{\vee}_i\}_{i\in I}\subset \mathfrak{h}$ where $\alpha_i (\omega^{\vee}_j) = \delta_{i,j}$.

\subsection{Weyl groups and reduced words}

The Weyl group $W$ of $\mathfrak{g}$ is a Coxeter group generated by the simple reflections $s_i$ ($i \in I$) with relations
\begin{align*} s_i^2 &=id, \\
 s_{i_1}s_{i_2} & =s_{i_2}s_{i_1} \qquad \, \, \, \text{if }c_{i_1,i_2}=0 \quad \text{ ($2$-term relation)},  \\
s_{i_1}s_{i_2}s_{i_1}&=s_{i_2}s_{i_1}s_{i_2} \quad \text{ if }c_{i_1,i_2}=-1 \quad \text{ ($3$-term relation)}.
\end{align*}

We sometimes call a $2$-term relation also a commutation relation.

The group $W$ has a unique longest element $w_0$ of length $N=\#\Delta^+$, where the length is given by the minimal number of generators in an expression. For a reduced expression $s_{i_1} \cdots s_{i_N}$ of $w_0$, i.e. an expression of minimal length, we write $\ii:=(i_1,\ldots, i_N)$ and call $\ii$ a \emph{reduced word} (for $w_0$). The set of reduced words for $w_0$ is denoted by $R(w_0)$.

The group $W$ acts on $P$ as $s_i(\lambda)=\lambda-\lambda(\alpha^{\vee}_i)\alpha_i$ for $\lambda \in P$ and $i\in I$. We denote by $i^*\in I$ the unique element such that $w_0\omega_i=-\omega_{i^*}$.

We have two operations on the set of reduced words $R(w_0)$.

A reduced word $\mathbf{j}=(j_1,\ldots,j_N)$ is defined to be obtained from $\ii=(i_1,\ldots,i_N)\in R(w_0)$ by a \emph{$2$-term move at position $k\in [N-1]$} if $i_{\ell}=j_{\ell}$ for all $\ell\notin \{k,k+1\}$, $(i_{k+1}, i_{k})=(j_{k},j_{k+1})$ and $c_{i_{k},i_{k+1}}=0$.

A reduced word $\mathbf{j}=(j_1,\ldots,j_N)$ is defined to be obtained from $\ii=(i_1,\ldots,i_N)\in R(w_0)$ by a \emph{$3$-term move at position $k\in [N-1]$} if $i_{\ell}=j_{\ell}$ for all $\ell\notin \{k-1,k,k+1\}$, $j_{k-1}=j_{k+1}=i_k$, $j_k=i_{k-1}=i_{k+1}$ and $c_{i_{k},i_{k+1}}=-1$.

By the Tits theorem every two reduced words for an element $w\in W$ can be obtained from each other by a sequence of $2$-term and $3$-term moves.

We call a total order $\le$ on $\Delta^+$ \emph{convex} if for any two positive roots $\beta_1,\beta_2$ such that $\beta_1+\beta_2\in \Delta^+$, we either have $\beta_1 < \beta_1+\beta_2 < \beta_2$ or $\beta_2 < \beta_1+\beta_2 < \beta_1$.
By \cite[Theorem p. 662]{P94} the set of reduced words is in bijection with the set of convex orders as follows. For a reduced word $\ii=(i_1,\ldots,i_N)\in R(w_0)$ the total order \begin{equation*}
\alpha_{i_1}<_{\ii} s_{i_1}(\alpha_{i_2}) <_{\ii} \ldots <_{\ii} s_{i_1}\cdots s_{i_{N-1}}(\alpha_{i_N})
\end{equation*}
on $\Delta^+$ is convex and every convex order on $\Delta^+$ arises that way. We write $\Delta^+_{\ii}=\{\beta_1,\beta_2,\ldots,\beta_N\}$ for the set of positive roots ordered with respect to the convex order $<_{\ii}$ and identify $\Delta_{\ii}^+$ with $[N]$ via
\begin{equation}\label{posident}
\beta_k \mapsto k.
\end{equation}
\subsection{Tropicalization}\label{trop}
We recall the notion of tropicalization from \cite{GHKK}. Let $\mathbb{G}_m$ be the multiplicative group. Let  $\mathbb{T}=\mathbb{G}_m^k$ be an algebraic torus. We denote by $[\mathbb{T}]_{trop}=\Hom(\mathbb{G}_m,\mathbb{T})=\mathbb{Z}^k$ its cocharacter lattice. A positive (i.e. subtraction-free) rational map $f$ on $\mathbb{T}$, $f(x)=\frac{\sum_{u\in I} a_u x^u}{\sum_{u\in J} b_u x^u}$ with {$a_u, b_u \in \mathbb{R_{+}}$}, gives rise to a piecewise-linear map
\begin{equation*}
[f]_{trop} : [\mathbb{T}]_{trop} \rightarrow [\mathbb{G}_m]_{trop}=\mathbb{Z}, \quad x\mapsto \min_{u \in I} \left\langle x,u\right\rangle - \min_{u\in J} \left\langle x,u\right\rangle,
\end{equation*}
where $\langle \cdot ,\cdot \rangle$ is the standard inner product of $\mathbb{Z}^k$. We call $[f]_{trop}$ the \emph{tropicalization} of $f$. 

For a positive rational map
$$f=(f_1, \dots, f_\ell) : \mathbb{G}_m^k \dashedrightarrow \mathbb{G}_m^{\ell}$$
we define its tropicalization as 
$$[f]_{trop}:=([f_1]_{trop}, \ldots ,[f_{\ell}]_{trop}): [\mathbb{G}_m^k]_{trop} \rightarrow [\mathbb{G}_m^{\ell}]_{trop}.$$


\section{String cones}
\subsection{String parametrization of the canonical basis}

Let $B(\infty)$ be the crystal basis of $U_q^-$ in the sense of \cite{Ka94} with partially inverse crystal operators $\tilde{e}_i$, $\tilde{f}_i$ for $i\in I$. For each reduced word $\mathbf{i}=(i_1,i_2,\ldots,i_N)\in R(w_0)$ we define the $\mathbf i$-string datum of an element of $B(\infty)$ as follows. 

\begin{definition} 
An \emph{$\mathbf i$-string datum} $\text{str}_{\mathbf i}(b)$ of $b\in B(\infty)$ is defined by a tuple $(x_1,x_2,\ldots,x_N)\in \mathbb{Z}_{\ge 0}^{\Delta_{\ii}^+}$ determined inductively by

\begin{align*}
x_1 &= \displaystyle\max_{ k \in \mathbb{Z}_{\ge 0}} \{\tilde{e}_{i_1}^k b \in B(\infty)\}, \\
x_2 &= \displaystyle\max_{ k \in \mathbb{Z}_{\ge 0}} \{ \tilde{e}_{i_2}^{k}\tilde{e}_{i_1}^{x_1} b \in B(\infty)\}, \\
& \vdots \\
x_N &= \displaystyle\max_{ k \in \mathbb{Z}_{\ge 0}} \{ \tilde{e}_{i_N}^k \tilde{e}_{i_{N-1}}^{x_{N-1}}\cdots \tilde{e}_{i_1}^{x_1} b \in B(\infty) \}.
\end{align*}

By \cite{BZ,Lit} the subset 
$$\mathcal{C}_{\mathbf{i}}:=\{\text{str}_{\mathbf i}(b) \mid b \in B(\infty)\}\subset \mathbb{Z}_{\ge 0}^{\Delta_{\ii}^+}$$ 
is a polyhedral cone called the \emph{string cone associated to $\mathbf i$}. 
\end{definition}

By \cite{BZ, Lit}:

\begin{definition}\label{eq:strans}  Let $\mathbf{j} \in R(w_0)$ be obtained from $\mathbf{i} \in R(w_0)$ by a $3$-term move at position $k$. We have a piecewise linear bijection
\begin{align*} \Psi_{\mathbf{j}}^{\mathbf{i}}:\mathcal{C}_{\mathbf{i}} & \rightarrow \mathcal{C}_{\mathbf{j}} \\
(x_{\ell})_{\beta_{\ell}\in \Delta^+_{\mathbf{i}}} & \mapsto (x'_{\ell})_{\beta_{\ell}\in \Delta^+_{\mathbf{j}}}
\end{align*}
given by
\begin{equation*}
x'_{k-1}=\max{(x_{k+1},x_{k}-x_{k-1})}, \quad x'_k=x_{k-1}+x_{k+1}, \quad x'_{k+1}=\min{(x_{k-1},x_k-x_{k+1})}.
\end{equation*}
and $x'_{\ell}=x_{\ell} \quad \forall \ell \notin \{k-1,k\}$.

Let $\mathbf{j}\in R(w_0)$ be obtained from $\mathbf{i}\in R(w_0)$ by a $2$-term move at position $k$. We have a linear bijection
\begin{align*} \Psi_{\mathbf{j}}^{\mathbf{i}}:\mathcal{C}_{\mathbf{i}} & \rightarrow \mathcal{C}_{\mathbf{j}} \\
(x_{\ell})_{\beta_{\ell}\in \Delta^+_{\mathbf{i}}} & \mapsto (x'_{\ell})_{\beta_{\ell}\in \Delta^+_{\mathbf{j}}}
\end{align*}
given by
\begin{equation*}
x'_{k}=x_{k+1}, \quad x'_{k+1}=x_{k}, \quad x'_{\ell}=x_{\ell} \quad \forall \ell \notin \{k-1,k\}.
\end{equation*}
Let $\mathbf{j},\mathbf{i} \in R(w_0)$ by two arbitrary reduced words. We define $\Psi_{\mathbf{j}}^{\mathbf{i}}:\mathcal{C}_{\mathbf{i}}  \rightarrow \mathcal{C}_{\mathbf{j}}$ to be the composition of the above defined bijections corresponding to a sequence of $2-$ and $3-$moves transforming $\mathbf{i}$ into $\mathbf{j}$.
\end{definition}

\subsection{Inequalities of string cones}
\begin{definition}\label{varsigma} Let $\mathbf{i},\mathbf{j}\in R(w_0)$ and $i\in I$. We define $\varsigma_{\mathbf{i},i}$ to be the rational function on $\mathcal{T}_{\mathbf{i}}=(\mathbb{C}^*)^{\Delta^+_{\mathbf{i}}}$ uniquely determined by the following two conditions.
\begin{enumerate}
\item If $i_N=i$, then $\varsigma_{\mathbf{i},i}(x)={x_N}$.
\item We have
$[\varsigma_{\mathbf{i},i}]_{trop}\circ \Psi^{\mathbf{j}}_{\mathbf{i}}=[\varsigma_{\mathbf{j},i}]_{trop}.$
\end{enumerate}
\end{definition}

In \cite{GKS20} we have shown that the tropicalization of this functions gives rise to the string cone inequalities.

\begin{proposition}\cite[Proposition 3.5]{GKS20}\label{stringpos} For $\mathbf{i} \in R(w_0)$, we have
\begin{equation}\label{stringconeine}
\mathcal{C}_{\mathbf{i}}=\{x \in [\mathcal{T}_{\mathbf{i}}]_{trop} \mid [\varsigma_{\mathbf{i},i}]_{trop}(x)\ge 0 \text{ for all }i\in I.\}.
\end{equation}
\end{proposition}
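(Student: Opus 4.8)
The plan is to deduce Proposition \ref{stringpos} from two ingredients: the defining properties of the functions $\varsigma_{\mathbf{i},i}$ recorded in Definition \ref{varsigma}, and the classical ``last coordinate'' description of the string cone due to \cite{BZ, Lit} that is recalled in the introduction. The bulk of the argument is a formal manipulation of tropicalized reparametrization maps; the genuine geometric input is isolated into a single cited fact.

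First I would record the key identity. Fix $i\in I$ and choose a reduced word $\mathbf{j}\in R(w_0)$ with $j_N=i$; such a word exists for every $i$ since $w_0$ sends every positive root to a negative one, so $\ell(w_0 s_i)=N-1$ and some reduced expression of $w_0$ ends in $s_i$. By condition $(1)$ of Definition \ref{varsigma} the function $\varsigma_{\mathbf{j},i}$ equals the coordinate $x_N$, hence $[\varsigma_{\mathbf{j},i}]_{trop}(y)=y_N$. Condition $(2)$ reads $[\varsigma_{\mathbf{i},i}]_{trop}\circ \Psi^{\mathbf{j}}_{\mathbf{i}}=[\varsigma_{\mathbf{j},i}]_{trop}$; evaluating this at $y=\Psi_{\mathbf{j}}^{\mathbf{i}}(x)$ and using that $\Psi_{\mathbf{j}}^{\mathbf{i}}=(\Psi^{\mathbf{j}}_{\mathbf{i}})^{-1}$ yields
\[
[\varsigma_{\mathbf{i},i}]_{trop}(x)=[\varsigma_{\mathbf{j},i}]_{trop}\bigl(\Psi_{\mathbf{j}}^{\mathbf{i}}(x)\bigr)=\bigl(\Psi_{\mathbf{j}}^{\mathbf{i}}(x)\bigr)_N
\]
as an identity of piecewise-linear functions on $[\mathcal{T}_{\mathbf{i}}]_{trop}$. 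In words, $[\varsigma_{\mathbf{i},i}]_{trop}$ computes the last coordinate of the $\mathbf{j}$-reparametrization for \emph{any} $\mathbf{j}$ ending in $i$; in particular this last coordinate is independent of the choice of such $\mathbf{j}$, which is exactly the consistency that makes $\varsigma_{\mathbf{i},i}$ well-defined.

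With this identity both inclusions follow. For $\mathcal{C}_{\mathbf{i}}\subseteq\{x\mid [\varsigma_{\mathbf{i},i}]_{trop}(x)\ge 0\ \forall i\}$: if $x\in\mathcal{C}_{\mathbf{i}}$ then, since $\Psi_{\mathbf{j}}^{\mathbf{i}}$ is a bijection of string cones and string data have non-negative entries, $\Psi_{\mathbf{j}}^{\mathbf{i}}(x)\in\mathcal{C}_{\mathbf{j}}\subseteq\mathbb{Z}_{\ge 0}^{\Delta_{\mathbf{j}}^+}$, so its last coordinate $[\varsigma_{\mathbf{i},i}]_{trop}(x)$ is non-negative for every $i\in I$. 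For the reverse inclusion: if $[\varsigma_{\mathbf{i},i}]_{trop}(x)\ge 0$ for all $i\in I$, then by the identity $\bigl(\Psi_{\mathbf{j}}^{\mathbf{i}}(x)\bigr)_N\ge 0$ for every $\mathbf{j}\in R(w_0)$, because each $\mathbf{j}$ ends in some letter $j_N\in I$; the last-coordinate description of the string cone then forces $x\in\mathcal{C}_{\mathbf{i}}$.

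The substantive input, and the step I expect to be the real obstacle, is precisely that last-coordinate description, namely
\[
\mathcal{C}_{\mathbf{i}}=\{x\in[\mathcal{T}_{\mathbf{i}}]_{trop}\mid (\Psi_{\mathbf{j}}^{\mathbf{i}}(x))_N\ge 0\ \text{for all}\ \mathbf{j}\in R(w_0)\},
\]
which is the fact from \cite{BZ, Lit} recalled in the introduction. Its easy half, that a genuine string datum has non-negative entries in every word, is what the first inclusion uses; its hard half asserts that non-negativity of merely the \emph{last} coordinate across all reduced words already certifies membership, equivalently that every facet inequality of $\mathcal{C}_{\mathbf{i}}$ is realized as a last coordinate after a suitable sequence of $2$- and $3$-term moves. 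Granting this, together with the independence of the last coordinate on the choice of $\mathbf{j}$ ending in $i$ (which underlies the well-definedness of $\varsigma_{\mathbf{i},i}$ and may be explained via Kashiwara's $*$-involution, identifying the last $\mathbf{j}$-coordinate with $\varepsilon_i(b^*)$), the proposition follows by combining the displayed identity with the characterization.
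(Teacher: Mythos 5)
Your argument is correct and is essentially the intended derivation: the paper gives no proof of this proposition, citing \cite[Proposition 3.5]{GKS20}, and its introduction sketches exactly your reduction --- conditions (1) and (2) of Definition \ref{varsigma} make $[\varsigma_{\mathbf{i},i}]_{trop}(x)$ equal to the last coordinate of $\Psi_{\mathbf{j}}^{\mathbf{i}}(x)$ for any $\mathbf{j}$ ending in $i$, after which the statement is a restatement of the last-coordinate characterization of $\mathcal{C}_{\mathbf{i}}$ from \cite{BZ,Lit}. You correctly isolate that characterization as the only substantive input, so the proposal matches the paper's (cited) approach.
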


The explicit form of the function $[\varsigma_{\ii,i}]_{trop}$ is not known. Explicit string cone inequalities are obtained in \cite{Lit} for a special class of reduced words called nice words (see Section \ref{nicesec} for the definition of nice words) and in \cite{GP} for all reduced words in type $A$ (also in \cite{BZ} for arbitrary reduced words but in a less explicit form). In \cite{GKS20} we show that the functions $[\varsigma_{\ii,i}]_{trop}$, $i\in I$ recover the string cone inequalities from \cite{GP} in type $A$. In Section \ref{nicesec} we further show that the functions $[\varsigma_{\ii,i}]_{trop}$, $i\in I$, recover the string cone inequalities from \cite{Lit}.

\subsection{(Non-)redundancy of string cone inequalities}
We fix $\mathbf{i} \in R(w_0)$ and $i\in I$. In this section we present a sufficient criterion for the non-redundancy of string cone inequalities (Theorem \ref{nomulti}). The missing proofs are provided in Section \ref{proofs}.

First we note that the function $\varsigma_{\mathbf{i},i}$ is a Laurent polynomial with non-negative integer coefficients. 

\begin{lemma} We have $\varsigma_{\mathbf{i},i}\in \mathbb{Z}_{\ge 0}[x_j^{\pm 1} \mid j \in N]$.
\end{lemma}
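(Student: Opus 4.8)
The plan is to prove that $\varsigma_{\mathbf{i},i}$ is a Laurent polynomial with non-negative integer coefficients by induction on the number of moves needed to bring $\mathbf{i}$ into a reduced word $\mathbf{i}'$ with $i'_N = i$, using the defining conditions of $\varsigma_{\mathbf{i},i}$ in Definition \ref{varsigma}. Since the Tits theorem guarantees that any two reduced words are connected by a sequence of $2$-term and $3$-term moves, it suffices to establish the base case and show that the property is preserved under a single move.

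For the base case, suppose $i_N = i$. Then by condition (1) we have $\varsigma_{\mathbf{i},i}(x) = x_N$, which is manifestly a Laurent polynomial (indeed a monomial) with non-negative integer coefficients. For the inductive step, I would analyze how the transition maps $\Psi^{\mathbf{j}}_{\mathbf{i}}$ interact with condition (2). The key observation is that condition (2) relates the tropicalizations $[\varsigma_{\mathbf{i},i}]_{trop}$ and $[\varsigma_{\mathbf{j},i}]_{trop}$ via the piecewise-linear maps $\Psi^{\mathbf{j}}_{\mathbf{i}}$; since tropicalization is a bijective correspondence between positive rational maps (modulo the detropicalization recipe of replacing $\min$ by a sum and $\max$ by a sum in the denominator) and their piecewise-linear shadows, I would detropicalize the explicit formulas for $\Psi^{\mathbf{j}}_{\mathbf{i}}$ given in Definition \ref{eq:strans}. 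Concretely, the $3$-term move formula $x'_{k-1} = \max(x_{k+1}, x_k - x_{k-1})$, $x'_k = x_{k-1} + x_{k+1}$, $x'_{k+1} = \min(x_{k-1}, x_k - x_{k+1})$ detropicalizes to subtraction-free rational expressions on the torus, for instance $x'_k$ corresponds to multiplication $x_{k-1} x_{k+1}$, while the $\max$ and $\min$ correspond to a sum $x_{k+1} + x_k x_{k-1}^{-1}$ and a harmonic-type expression respectively. Substituting these positive rational maps into the Laurent polynomial $\varsigma_{\mathbf{j},i}$ produces a positive rational function whose tropicalization equals $[\varsigma_{\mathbf{i},i}]_{trop}$, and hence equals $\varsigma_{\mathbf{i},i}$ by uniqueness.

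The substance of the argument, and the step I expect to be the main obstacle, is showing that after the substitution no genuine denominators survive, i.e. that the resulting positive rational function is in fact a Laurent polynomial with non-negative integer coefficients rather than a ratio of two such polynomials. The detropicalized $3$-term move introduces the factor coming from $\min(x_{k-1}, x_k - x_{k+1})$, which naively carries a denominator, so one must verify that these denominators cancel against the structure of $\varsigma_{\mathbf{j},i}$. Here I would invoke the fact highlighted in the introduction that $\varsigma_{\mathbf{i}}$ is the pullback of the \cite{GHKK} potential function, which by Proposition \ref{polynomial} has only non-positive exponents in the appropriate torus coordinates; this structural input is precisely what forces the combination of monomials to remain polynomial after each transition.

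An alternative and cleaner route, which I would prefer if the detropicalization bookkeeping becomes unwieldy, is to appeal directly to Proposition \ref{iso} identifying $\varsigma_{\mathbf{i}}$ with the pullback of the potential function under an isomorphism of tori, and then to Proposition \ref{polynomial}, which asserts that this potential function is a Laurent polynomial with non-negative integer coefficients. Since $\varsigma_{\mathbf{i},i}$ is one of the summands in the natural decomposition $\varsigma_{\mathbf{i}} = \sum_{i \in I} \varsigma_{\mathbf{i},i}$ indexed by simple roots, and this decomposition respects the monomial structure, each summand inherits the property of being a Laurent polynomial with non-negative integer coefficients. This reduces the lemma to results already established in \cite{GKS20} and \cite{GHKK}, with the only remaining point being to check that the summand decomposition does not mix monomials in a way that could create cancellation or subtraction, which follows from the fact that distinct simple roots contribute monomials supported on disjoint exponent patterns.
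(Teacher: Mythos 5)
Your second, ``cleaner'' route is essentially the paper's own proof: the paper handles the Laurent-polynomial claim by citing \cite[Corollary 7.6]{GKS20}, which is exactly the identification of $\varsigma_{\mathbf{i},i}$ with the pullback of the potential function (Proposition \ref{iso} together with Proposition \ref{polynomial} in this paper), and gets non-negativity of the coefficients from the subtraction-free nature of the construction. Two remarks on your write-up. First, Proposition \ref{polynomial} as stated only asserts membership in $\mathbb{C}[X_k^{-1}(\Sigma)\mid k\in M]$; the non-negative integrality of the coefficients is not part of that statement but follows from the subtraction-free $\mathcal{X}$-mutation rule \eqref{X-mutation} (equivalently, positivity of $F$-polynomials) applied to $W_k|_{\mathcal{X}_{\Sigma_k}}=X_k^{-1}(\Sigma_k)$ --- this is what the paper means by ``guaranteed by the definition.'' Your worry about the decomposition $\varsigma_{\mathbf{i}}=\sum_{i}\varsigma_{\mathbf{i},i}$ mixing monomials is moot, since Proposition \ref{iso} identifies each summand individually with $W_i|_{\mathcal{X}_{\ii}}\circ\widehat{\text{CA}}_{\ii}$. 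Second, your first route has a genuine gap: tropicalization is far from injective on positive rational functions (coefficients, and even whole monomials, are invisible to it), so there is no canonical ``detropicalization'' of $\Psi^{\mathbf{j}}_{\mathbf{i}}$, and the concluding step ``hence equals $\varsigma_{\mathbf{i},i}$ by uniqueness'' is unfounded --- Definition \ref{varsigma} as literally stated only pins down $[\varsigma_{\mathbf{i},i}]_{trop}$, and the honest rational-function lift is precisely what the cluster-theoretic identification supplies. Since you fall back on Proposition \ref{polynomial} to control denominators in that route anyway, it collapses into the second one; stick with that.
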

\begin{proof}
The fact that $\varsigma_{\mathbf{i},i}$ is a Laurent polynomial follows from \cite[Corollary 7.6.]{GKS20} and the positivity of the coefficients is guaranteed by the definition of $\varsigma_{\mathbf{i},i}$.
\end{proof}

We denote by $\mathcal{M}_{j,\ii}(\varsigma)$ the set of Laurent monomials of $\varsigma_{\mathbf{i},i}$. Our first observation is that the inequalities arising from $\varsigma_{\mathbf{i},j}$, $j\ne i$, cannot be expressed in terms of the inequalities of $\varsigma_{\mathbf{i},i}$.

\begin{proposition}\label{stringred} Let $m_0\in \mathcal{M}_{i,\ii}(\varsigma)$ and assume that the inequality arising from $m_0$ is redundant, i.e. 
$$[m_0]_{trop}=\sum_{j\in I}\sum_{\mathfrak{m}\in \mathcal{M}_{ j ,\ii}(\varsigma)}r_{\mathfrak{m}}[\mathfrak{m}]_{trop}$$ 
with $r_{\mathfrak{m}}\in \mathbb{R}_+$. Then $\mathfrak{m}\in \mathcal{M}_{j,\ii}(\varsigma)$ with $i\ne j$ implies that $r_{\mathfrak{m}}=0$.
\end{proposition}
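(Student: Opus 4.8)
The plan is to reduce the statement to an elementary linear-algebra argument built on a suitable weight grading. Recall first that for a Laurent monomial $\mathfrak{m}=x^{u_{\mathfrak{m}}}$ with exponent vector $u_{\mathfrak{m}}\in\mathbb{Z}^{\Delta^+_{\mathbf{i}}}$ one has $[\mathfrak{m}]_{trop}(y)=\langle y,u_{\mathfrak{m}}\rangle$, so that the assumed redundancy
$$[m_0]_{trop}=\sum_{j\in I}\sum_{\mathfrak{m}\in\mathcal{M}_{j,\mathbf{i}}(\varsigma)}r_{\mathfrak{m}}\,[\mathfrak{m}]_{trop}, \qquad r_{\mathfrak{m}}\in\mathbb{R}_+,$$
is equivalent to the single identity $u_{m_0}=\sum_{j,\mathfrak{m}}r_{\mathfrak{m}}\,u_{\mathfrak{m}}$ in the character lattice of $\mathcal{T}_{\mathbf{i}}$. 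It therefore suffices to produce a linear map $\mathrm{wt}$ from this character lattice to the $\mathbb{R}$-span of the simple roots such that $\mathrm{wt}(u_{\mathfrak{m}})=v_j$ for every $\mathfrak{m}\in\mathcal{M}_{j,\mathbf{i}}(\varsigma)$, where $\{v_j\}_{j\in I}$ is a linearly independent family; equivalently, each $\varsigma_{\mathbf{i},j}$ must be homogeneous of weight $v_j$ for the grading induced by $\mathrm{wt}$.

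To construct such a grading I would use the cluster-theoretic description from \cite{GKS20}. By Proposition \ref{iso} the Laurent polynomial $\varsigma_{\mathbf{i}}=\sum_{j\in I}\varsigma_{\mathbf{i},j}$ is the pullback, along an isomorphism of tori $\Phi$, of the \cite{GHKK} potential function $W=\sum_{j\in I}W_j$ on the big reduced double Bruhat cell, with $\varsigma_{\mathbf{i},j}=\Phi^{*}W_j$. The double Bruhat cell carries the action of the maximal torus $H$, whose cluster coordinates are eigenvectors, so that $H$ induces a weight grading on Laurent polynomials in those coordinates. The point I would extract from the explicit form of the potential (its summands being built from the generalized minors attached to the fundamental weight $\omega_j$) is that each $W_j$ is an $H$-eigenvector of a weight $v_j$, where the family $\{v_j\}_{j\in I}$ is linearly independent; concretely the $v_j$ coincide, up to a fixed sign and the involution $j\mapsto j^{*}$, with the simple roots $\alpha_j$. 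Transporting this grading to $\mathcal{T}_{\mathbf{i}}$ through $\Phi$ then produces the desired $\mathrm{wt}$ and makes every monomial of $\varsigma_{\mathbf{i},j}=\Phi^{*}W_j$ homogeneous of weight $v_j$, i.e. $\mathrm{wt}(u_{\mathfrak{m}})=v_j$ for all $\mathfrak{m}\in\mathcal{M}_{j,\mathbf{i}}(\varsigma)$.

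With the grading in hand the argument concludes by applying $\mathrm{wt}$ to the identity $u_{m_0}=\sum_{j,\mathfrak{m}}r_{\mathfrak{m}}u_{\mathfrak{m}}$. Setting $R_j=\sum_{\mathfrak{m}\in\mathcal{M}_{j,\mathbf{i}}(\varsigma)}r_{\mathfrak{m}}\ge 0$ and using $\mathrm{wt}(u_{m_0})=v_i$, one obtains $v_i=\sum_{j\in I}R_j v_j$. Since the $v_j$ are linearly independent, comparing coefficients forces $R_i=1$ and $R_j=0$ for every $j\neq i$. As each $r_{\mathfrak{m}}$ is non-negative, the vanishing $R_j=0$ yields $r_{\mathfrak{m}}=0$ for all $\mathfrak{m}\in\mathcal{M}_{j,\mathbf{i}}(\varsigma)$ with $j\neq i$, which is exactly the assertion.

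The final grading computation is routine; the real content, and the step I expect to be the main obstacle, is the homogeneity claim in the middle paragraph, namely that the decomposition $\varsigma_{\mathbf{i}}=\sum_{j}\varsigma_{\mathbf{i},j}$ is genuinely a weight decomposition, with each summand homogeneous of a weight $v_j$ and with the $v_j$ linearly independent. This must be read off from the explicit shape of the potential summands $W_j$ together with the compatibility of the $H$-action with the torus isomorphism $\Phi$ of Proposition \ref{iso}; once the weight of each $W_j$ is identified with a fixed multiple of the simple root $\alpha_j$, the linear independence required above is automatic, and no information beyond the weight grading is needed.
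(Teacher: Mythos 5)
Your reduction of the hypothesis to the single linear identity $u_{m_0}=\sum_{j,\mathfrak{m}}r_{\mathfrak{m}}u_{\mathfrak{m}}$ on exponent vectors, and the concluding comparison of coefficients, are both fine. But the entire content of the proposition sits in the step you yourself flag as ``the main obstacle'' and then do not prove: the existence of a linear functional $\mathrm{wt}$ that is constant on each family $\mathcal{M}_{j,\ii}(\varsigma)$ with linearly independent values $v_j$. As written this is an assertion, not an argument, so the proof is incomplete. The paper establishes precisely such a separating statement, by a purely cluster-theoretic route: Lemma \ref{prepare} shows, by induction along the mutation sequence \eqref{eq:movesop} from $\Sigma_{\ii}$ to an optimized seed and the explicit $\mathcal{X}$-mutation rule \eqref{X-mutation}, that the distinguished frozen variable $X_{j,m_j}(\ii)$ occurs with strictly negative exponent in every monomial of $\restr{W_j}{\mathcal{X}_{\ii}}$ and with exponent $0$ in every monomial of $\restr{W_{j'}}{\mathcal{X}_{\ii}}$ for $j'\ne j$. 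Transporting this through the monomial isomorphism $\widehat{\text{CA}}_{\ii}$ of Proposition \ref{iso} and evaluating the $X_{j,m_j}$-coordinate of your exponent identity, the nonnegativity of the $r_{\mathfrak{m}}$ (together with Proposition \ref{polynomial}, which makes all exponents nonpositive) forces $r_{\mathfrak{m}}=0$ for $\mathfrak{m}\in\mathcal{M}_{j,\ii}(\varsigma)$. In other words, the projection onto the coordinates of the distinguished frozen variables \emph{is} the grading you are looking for, and proving that it has the required properties is the whole work of the proposition.

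Your proposed route to the missing homogeneity claim --- $H$-equivariance of the potential summands under the conjugation action on $L^{e,w_0}$, with weights equal (up to sign and the involution $j\mapsto j^*$) to the simple roots --- is plausible, since the summands of the Berenstein--Kazhdan-type potential are ratios of generalized minors and hence $H$-eigenvectors. But it is not available inside this paper: here $W_j$ is defined cluster-theoretically via optimized seeds (Definition \ref{potdefi}), and identifying it with a ratio of generalized minors requires results of \cite{GKS20} that are only cited, not reproved; you would also need to verify compatibility of the $H$-weights with $\widehat{\text{CA}}_{\ii}$. Either carry out that identification and the weight computation in full, or replace your middle paragraph by the divisibility/induction argument of Lemma \ref{prepare}.
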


We define the following notion.

\begin{definition} Let $\mathbf{i}\in R(w_0)$ and $i\in I$. We say that $\varsigma_{\mathbf{i},i}=\sum_{k\in \mathbb{Z}^N}a_k x_1^{k_1}\cdots x_N^{k_N}$ is \emph{multiplicity-free} if for all $k\in \mathbb{Z}^N$ such that $a_k\ne 0$ we have $|k_j|\le 1$ for all $j\in [N]$.
\end{definition}

The notion of multiplicity-free gives us a sufficient criterion for the non-redundancy of string cones inequalities as the following theorem shows.

\begin{theorem}\label{nomulti} If $\varsigma_{\mathbf{i},i}$ is multiplicity-free, then the set of inequalities given in \eqref{stringconeine} is non-redundant.
\end{theorem}

We conjecture that the sufficient criterion of Theorem \ref{nomulti} is indeed also necessary.

\begin{conjecture}\label{conjmu2} The set of inequalities given in \eqref{stringconeine} is non-redundant if and only if $\varsigma_{\mathbf{i},i}$ is multiplicity-free.
\end{conjecture}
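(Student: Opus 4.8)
The plan is to treat the two implications separately: the forward implication (multiplicity-free $\Rightarrow$ non-redundant) is exactly Theorem~\ref{nomulti}, so all the real content lies in the reverse implication, which I first reduce to a combinatorial statement about exponent vectors. By the preceding Lemma, $\varsigma_{\mathbf{i},i}=\sum_{m\in\mathcal{M}_{i,\mathbf{i}}(\varsigma)}a_m\,x^{e_m}$ is a genuine Laurent polynomial, so its tropicalization is a minimum of linear forms,
\begin{equation*}
[\varsigma_{\mathbf{i},i}]_{trop}(x)=\min_{m\in\mathcal{M}_{i,\mathbf{i}}(\varsigma)}\langle x,e_m\rangle,
\end{equation*}
and the $i$-part of the system \eqref{stringconeine} is the homogeneous family $\{\langle x,e_m\rangle\ge 0\}$ indexed by the exponent vectors $e_m\in\mathbb{Z}^N$. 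Since the inequalities are homogeneous, the one attached to $m_0$ is redundant exactly when $e_{m_0}\in\mathrm{cone}\{e_m\mid m\ne m_0\}$, and by Proposition~\ref{stringred} only the monomials of $\varsigma_{\mathbf{i},i}$ itself can enter such a dependence. Thus the conjecture is equivalent to the assertion that the vectors $\{e_m\}$ are the extreme rays of their own conical hull if and only if every $e_m$ has entries in $\{-1,0,1\}$.

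I want to stress that neither direction is a formal consequence of this reformulation: the vectors $(1,0),(0,1),(1,1)$ are all $\{0,1\}$-valued yet satisfy $(1,1)=(1,0)+(0,1)$, so multiplicity-freeness alone does not force conical independence. Both implications must therefore use the origin of the $e_m$ in the potential function. Here the key inputs are Proposition~\ref{iso}, under which the redundancy question is invariant when $\varsigma_{\mathbf{i}}$ is pulled back from the Gross--Hacking--Keel--Kontsevich potential $W$ by a torus isomorphism (a lattice isomorphism on exponents, hence preserving all conical relations), and Proposition~\ref{polynomial}, which says the exponents of $W$ are non-positive. The plan is to analyse the $e_m$ through the broken-line / $\mathbf{i}$-trail description of the monomials of $W$ that already underlies the special families treated in Section~\ref{trails} and the simply-braided and nice-word cases.

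For the reverse implication I would argue by contrapositive. Assuming a monomial $m_0$ of $\varsigma_{\mathbf{i},i}$ has an exponent with $|e_{m_0,j}|\ge 2$, the goal is to produce a conical dependence $e_{m_0}\in\mathrm{cone}\{e_{m_1},e_{m_2}\}$ among other exponent vectors of $\varsigma_{\mathbf{i},i}$. The mechanism I expect is that a coordinate of magnitude $\ge 2$ forces $e_{m_0}$ to be a non-vertex point of the Newton polytope of $\varsigma_{\mathbf{i},i}$: the broken line (respectively $\mathbf{i}$-trail) responsible for such a doubled contribution should split into two lines of smaller total bending whose exponent vectors span a face containing $e_{m_0}$ in its relative interior, thereby exhibiting the inequality of $m_0$ as redundant. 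Equivalently, the target is to show that the vertices of the Newton polytope are precisely the multiplicity-free exponent vectors and that $\varsigma_{\mathbf{i},i}$ has full support on this polytope, so that every non-multiplicity-free monomial is a non-extreme lattice point.

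The main obstacle is that no closed form for the monomials of $\varsigma_{\mathbf{i},i}$ is available for general $\mathbf{i}$, so neither the splitting nor the saturation step can be verified by direct computation; each must be derived from a uniform structural property of $W$ that is stable under the $2$- and $3$-term moves. A natural intermediate target is the refined Conjecture~\ref{mult2}: if one can show in the potential realisation that a monomial is non-extreme exactly when it carries a coefficient $>1$, and separately that a coefficient $>1$ forces (and is forced by) a large exponent — the link to $F$-polynomials suggested in Remark~\ref{ffei} — then Conjecture~\ref{conjmu2} follows. Establishing either the broken-line splitting or this coefficient–exponent correspondence in full generality is precisely where I expect the difficulty to concentrate.
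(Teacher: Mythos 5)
This statement is a \emph{conjecture} in the paper: there is no proof to compare against. The authors establish only the implication ``multiplicity-free $\Rightarrow$ non-redundant'' (Theorem~\ref{nomulti}, via Proposition~\ref{nomult}), and the converse is supported solely by the single $D_4$ computation of Section~\ref{exnonm}. Your proposal is consistent with this state of affairs: you correctly delegate the forward direction to Theorem~\ref{nomulti}, your reduction of redundancy to the conical dependence $e_{m_0}\in\mathrm{cone}\{e_m\mid m\ne m_0\}$ (legitimate here because the string cone is full-dimensional, and restricted to the monomials of $\varsigma_{\mathbf{i},i}$ by Proposition~\ref{stringred}) is sound, and your observation that $\{-1,0,1\}$-valuedness alone does not force conical independence correctly identifies why the paper's proof must invoke Proposition~\ref{polynomial} and Lemma~\ref{prepare}(1). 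You also honestly flag that the converse is not established. So far this is an accurate assessment rather than a proof.

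However, the strategy you sketch for the converse is aimed at the wrong monomial, and the paper's own example refutes its central heuristic. You propose to show that a monomial $m_0$ with $|e_{m_0,j}|\ge 2$ is itself a non-vertex of the Newton polytope and hence redundant, and more strongly that ``the vertices of the Newton polytope are precisely the multiplicity-free exponent vectors.'' In Section~\ref{exnonm} the redundant inequality comes from the monomial $2X_9^{-1}X_5^{-1}X_6^{-1}X_7^{-1}X_8^{-1}$, whose exponent vector is $\{0,-1\}$-valued; the monomial $X_9^{-1}X_5^{-2}X_6^{-1}X_7^{-1}X_8^{-1}$ carrying the exponent $-2$ defines a facet and appears as one of the two \emph{extreme} generators of the dependence
\begin{equation*}
e_{m_0}=\tfrac12\bigl(e_{m_1}+e_{m_2}\bigr),
\end{equation*}
with $e_{m_1}$ the vector containing the entry $-2$. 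So a large exponent does not make its own monomial redundant, and multiplicity-free exponent vectors need not be vertices. What the converse actually requires is: the presence of \emph{some} monomial with an entry of absolute value $\ge 2$ forces \emph{some other} monomial (conjecturally the one with coefficient $>1$, cf.\ Conjecture~\ref{mult2}) into the relative interior of the Newton polytope, together with the guarantee that such a companion monomial exists with nonzero coefficient. Neither step is supplied by your broken-line splitting picture as stated, and no mechanism in the paper supplies it either; this is precisely why the statement remains a conjecture.
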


In Section \ref{exnonm} we provide an of Conjecture \ref{conjmu2} in a case which is not multiplicity-free.

\section{Cluster varieties and potential functions}
We first define the notion of a seed.
\begin{definition}\label{clusterquiver} Let $M$ be a finite index set and $M_0\subset M$. We associate a quiver  $\Gamma_{\Sigma}$ to a datum $\Sigma=(\Lambda, \left<,\right>_{\Sigma}, \{e_k\}_{k\in M})$, where
\begin{itemize}
\item[(i)] $\Lambda$ is a lattice,
\item[(ii)] $\left<,\right>_{\Sigma}$ is a skew-symmetric $\mathbb{Z}$-valued bilinear form on $\Lambda$,
\item[(iii)] $\{{e_k}\}_{k\in M}$ is a basis of $\Lambda$.
\end{itemize}
The set of vertices $\{v_k\}_{k\in M}$ of the quiver $\Gamma_{\Sigma}$ is indexed by the set $M$. The set $\{v_k\}_{k\in M_0}$ is called the subset of frozen vertices and the set $\{v_k\}_{k\in M\setminus M_0}$ is called the subset of mutable vertices. Two vertices $v_{k}$ and $v_{\ell}$ of $\Gamma_{\Sigma}$ are connected by $\left< e_k,e_{\ell}\right>_{\Sigma}$ arrows in $\Gamma_{\Sigma}$ with source $v_k$ and target $v_{\ell}$ if and only if $\left<e_k,e_{\ell}\right> \ge 0$. The datum $\Sigma$ is called a $\emph{seed}$.
\end{definition}
Let $\Sigma=(\Lambda, \left<,\right>_{\Sigma}, \{e_k\}_{k\in M})$ be a seed. For each $k\in M \setminus M_0$ we define the seed $\mu_k(\Gamma_{\Sigma})$, called the \emph{mutation of $\Sigma$ at $k$}, by replacing the basis $\{e_k\}_{k\in M}$ by the new basis $\{e'_k\}_{k\in M}$ defined as
\begin{equation*}
e'_j=\begin{cases} e_j+\max\{0, \left<e_j,e_k\right>_{\Sigma}\}e_k & \text{ if }j\ne k \\
-e_k & \text{ if }j=k.
\end{cases}
\end{equation*}

The quiver $\Gamma_{\mu_k (\Sigma)}=\mu_k(\Gamma_{\Sigma})$ has the same vertex set as $\Gamma_{\Sigma}$. The mutable vertices of $\mu_k(\Gamma_{\Sigma})$ equal the mutable vertices of $\Gamma_{\Sigma}$. The arrow set of $\mu_k(\Gamma_{\Sigma})$ equals the arrow set of $\Gamma_{\Sigma}$ with the following changes: 
	\begin{itemize}
		\item[(i)] All arrows of $\Gamma_{\Sigma}$ with source or target $v_k$ gets replaced in $\mu_k(\Gamma_{\Sigma})$ by the reversed arrow.
		\item[(ii)] For every pair of arrows $(h_1, h_2)\in \Gamma_{\Sigma}\times \Gamma_{\Sigma}$ with 
		$$v_k=\text{target of $h_1$}=\text{source of $h_2$}$$
		we add to $\mu_k(\Gamma_{\Sigma})$ an arrow with source the source of $h_1$ and target the target of $h_2$.
		\item[(iii)] If a $2$-cycles was obtained during (i) and (ii), the arrows of this $2$-cycle get erased in $\mu_k(\Gamma_{\Sigma})$.
		\item[(iv)] Finally we erase all arrows between frozen vertices.
	\end{itemize}
The quiver $\mu_k(\Gamma_{\Sigma})$ is called the \emph{mutation of $\Gamma_{\Sigma}$ at $k$}.

To each seed $\Sigma=(\Lambda, \left<,\right>_{\Sigma}, \{e_k\}_{k\in M})$ we assign a collection of $\mathcal{A}$-cluster variables $\{A_k(\Sigma)\}_{k\in M}$ and $\mathcal{X}$-cluster variables $\{X_k(\Sigma)\}_{k \in M}$ and tori
$$\mathcal{A}_{\Sigma}:=\text{Spec}\mathbb{C}[A^{\pm 1}_k(\Sigma) \mid k\in M], \qquad {{\mathcal{X}}}_{\Sigma}:=\text{Spec}\mathbb{C}[X^{\pm 1}_k(\Sigma) \mid k\in M],$$
called the \emph{$\mathcal{A}$-cluster torus} and the \emph{$\mathcal{X}$-cluster torus} associated to $\Sigma$, respectively. We call $\{A_k(\Sigma)\}_{k\in M_0}$ and $\{X_k(\Sigma)\}_{k \in M_0}$ the frozen $\mathcal{A}-$ and $\mathcal{X}$-cluster variables respectively.

Assume that the quiver $\Gamma_{\Sigma'}$ of the seed $\Sigma'$ is obtained from the exchange graph $\Gamma_{\Sigma}$ of the seed $\Sigma$ by mutation at the vertex $k$. We define birational transition maps (see \cite[Equations (13) and (14)]{FG})
\begin{align} \notag
{\mu_k^*} A_i(\Sigma') &   = \begin{cases}   {A_k^{-1}(\Sigma)  \left(\displaystyle\prod_{j \,:\, \left<e_j,e_k\right>_{\Sigma} >0} A_j(\Sigma) ^{\left<e_j,e_k\right>_{\Sigma}}   + \displaystyle\prod_{j \,:\, \left<e_j,e_k\right>_{\Sigma} < 0} A_j(\Sigma) ^{-\left<e_j,e_k\right>_{\Sigma}}\right)} &   \text{if $i=k$,} \\   A_i(\Sigma)  &   \text{else,} \end{cases} \\ \label{X-mutation}
 \check{\mu}_k^* X_i(\Sigma')  &   = \begin{cases}   X_k(\Sigma) ^{-1} &   \text{if $i=k$,} \\   X_i(\Sigma)  \left(1+X_k(\Sigma) ^{-\text{sgn} \left<e_i,e_k\right>_{\Sigma }}\right)^{-\left<e_i,e_k\right>_{\Sigma}} &   \text{else,} \end{cases}
\end{align}
which we call \emph{$\mathcal{A}$- and $\mathcal{X}$-cluster mutation}, respectively.

\begin{definition} Given a fixed initial seed $\Sigma_0$ the $\mathcal{A}$- and $\mathcal{X}$-cluster variety, respectively, is defined as the scheme 
$$\mathcal{A}:=\displaystyle\bigcup_{\Sigma} \mathcal{A}_{\Sigma} \qquad \mathcal{X}=\displaystyle\bigcup_{\Sigma} \mathcal{X}_{\Sigma}, $$
obtained by gluing the tori $\mathcal{A}_{\Sigma}$ and $\mathcal{X}_{\Sigma}$ along the $\mathcal{A}$- and $\mathcal{X}$-cluster mutation, respectively. Here $\Sigma$ varies over all seeds which can be obtained from $\Sigma_0$ by a finite sequence of mutations.

Furthermore we define a partial compactification $\overline{\mathcal{A}}$ of $\mathcal{A}$ by adding the divisors corresponding to the vanishing locus of the frozen cluster variables.
\end{definition}

In \cite{GHKK} a Landau-Ginzburg potential $W$ on $\mathcal{X}$ is defined as the sum of certain global monomials attached to the frozen cluster variables. We only give the definition in the case that every frozen cluster variable has an optimized seed.

\begin{definition}\label{potdefi} Let $\Sigma$ be a seed and $k\in M_0$ a frozen vertex of $\Gamma_{\Sigma}$. We say that \emph{$\Sigma$ is optimized for $k$} if whenever $k'\in M\setminus M_0$ is adjacent to $k$, the source of the corresponding arrow is $k'$. 

Let $\mathcal{X}$ be the cluster variety obtained from the initial datum $\Sigma$. If for every $k\in M_0$ there exists a seed $\Sigma_k$ of $\mathcal{X}$ which is optimized for $v_k$, we define 
$$W=\displaystyle\sum_{k\in M_0} W_k \in \mathbb{C}[{\mathcal{X}}]$$
by 
$$\restr{W_k}{{{\mathcal{X}}}_{\Sigma_k}}=X_k^{-1}(\Sigma_k).$$
\end{definition}

The following consequence of Fomin-Zelevinsky's separation formula is crucial for our approach.

\begin{proposition}\label{polynomial} Let $\Sigma_0=(\Lambda, \left<,\right>, \{e_k\}_{k\in M})$ be a seed and $\mathcal{X}$ be the cluster variety obtained from the initial datum $\Sigma_0$. Assume furthermore that every frozen vertex has an optimized seed. For any seed $\Sigma$ of $\mathcal{X}$ we have 
$$\restr{W}{{\mathcal{X}_{\Sigma}}}\in \mathbb{C}[X_k^{-1}(\Sigma) \mid k \in M],$$
\end{proposition}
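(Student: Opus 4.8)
The plan is to prove Proposition \ref{polynomial} by induction on the length of a mutation sequence connecting $\Sigma$ to $\Sigma_0$, using the separation formula of Fomin--Zelevinsky to control how each summand $W_k$ transforms under a single $\mathcal{X}$-cluster mutation. The key structural fact I would isolate first is that for each frozen $k\in M_0$, the global monomial $W_k$ has, in \emph{every} seed $\Sigma$, the shape of a Laurent monomial with only non-positive exponents, i.e. $\restr{W_k}{\mathcal{X}_\Sigma}=\prod_{\ell\in M}X_\ell(\Sigma)^{-c_\ell}$ with $c_\ell\in\mathbb{Z}_{\ge 0}$. Since $W=\sum_{k\in M_0}W_k$, establishing this for each $W_k$ individually suffices. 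The base of the induction is the optimized seed $\Sigma_k$, where by Definition \ref{potdefi} we have $\restr{W_k}{\mathcal{X}_{\Sigma_k}}=X_k^{-1}(\Sigma_k)$, which is manifestly a monomial with non-positive exponents.

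For the inductive step I would take a seed $\Sigma$ in which $\restr{W_k}{\mathcal{X}_\Sigma}$ is already known to be a monomial $\prod_\ell X_\ell(\Sigma)^{-c_\ell}$ with all $c_\ell\ge 0$, and apply a single mutation $\check\mu_j$ at some mutable vertex $j\in M\setminus M_0$ to pass to the adjacent seed $\Sigma'$. Substituting the $\mathcal{X}$-mutation formula \eqref{X-mutation} into the monomial, the factor coming from the $j$-th variable contributes $X_j(\Sigma)^{-c_j}=X_j(\Sigma')^{c_j}$, while every other factor $X_\ell(\Sigma)^{-c_\ell}$ (with $\ell\ne j$) contributes
\begin{equation*}
X_\ell(\Sigma')^{-c_\ell}\bigl(1+X_j(\Sigma')^{-\operatorname{sgn}\langle e_\ell,e_j\rangle_{\Sigma'}}\bigr)^{c_\ell\langle e_\ell,e_j\rangle_{\Sigma'}}.
\end{equation*}
The heart of the argument is to show that after collecting all these binomial factors together with the $X_j(\Sigma')^{c_j}$ term, the result is again a single Laurent monomial in the $X_\ell(\Sigma')$ with non-positive exponents. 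A priori the binomial factors introduce genuine sums rather than monomials, so the non-trivial point is a cancellation: one must verify that the total power of $X_j(\Sigma')$ accumulated from the monomial factor $X_j(\Sigma')^{c_j}$ and from the constant terms $1$ in each binomial forces the combination to collapse. This is precisely where the optimization hypothesis and the sign pattern of the exchange matrix enter, and it is where I expect the main obstacle to lie.

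To overcome this obstacle I would appeal directly to the Fomin--Zelevinsky separation formula, which expresses each $\mathcal{X}$-variable globally in terms of an initial seed through $F$-polynomials and $c$-vectors; the statement that $W_k$ remains a monomial is then equivalent to the statement that the relevant $F$-polynomial specializes to $1$, equivalently that the $c$-vector of $k$ in the seed $\Sigma$ (relative to $\Sigma_k$) is sign-coherent with the correct sign throughout the mutation path. Since this sign-coherence is exactly guaranteed by $k$ being frozen and by the existence of an optimized seed $\Sigma_k$, the separation formula yields that $\restr{W_k}{\mathcal{X}_\Sigma}$ is a Laurent monomial and, moreover, that the global monomial attached to a frozen variable has non-positive exponents in every cluster. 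Summing over $k\in M_0$ gives $\restr{W}{\mathcal{X}_\Sigma}\in\mathbb{C}[X_k^{-1}(\Sigma)\mid k\in M]$, completing the proof. The cleanest route, rather than grinding through the binomial bookkeeping by hand, is to cite the separation formula as the input and extract the monomiality and the negativity of exponents as a formal consequence of sign-coherence; the inductive substitution above then serves mainly to make the claim self-contained and to pin down exactly which sign convention is needed.
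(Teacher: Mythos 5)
Your argument is organized around a structural claim that is false: that for each frozen $k$ the restriction $\restr{W_k}{\mathcal{X}_\Sigma}$ is a \emph{single} Laurent monomial $\prod_{\ell}X_\ell(\Sigma)^{-c_\ell}$ in every seed $\Sigma$. The paper's own computations contradict this: Proposition \ref{tubes} exhibits $\restr{W_i}{\mathcal{X}_{\ii}}$ as a sum of $s+1$ monomials for a simply-braided word, and in Section \ref{exnonm} the restriction $\restr{W_2}{\mathcal{X}_{\ii}}$ has $27$ monomials. The binomial factors $\bigl(1+X_j(\Sigma')^{-\operatorname{sgn}\langle e_\ell,e_j\rangle}\bigr)^{-c_\ell\langle e_\ell,e_j\rangle}$ produced by the $\mathcal{X}$-mutation do not collapse, so the cancellation you hope for in the inductive step does not exist, and your fallback reformulation --- that ``the relevant $F$-polynomial specializes to $1$'' --- is equally untrue: the $F$-polynomial genuinely contributes many terms. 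The proposition only asserts that $\restr{W}{\mathcal{X}_\Sigma}$ is a \emph{polynomial} in the inverses $X_\ell(\Sigma)^{-1}$, i.e.\ that every exponent occurring is non-positive; it is not a statement about monomiality.

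The correct route, which your last paragraph gestures at but misapplies, is the one the paper takes. After passing to an auxiliary cluster variety in which $k$ is the only retained frozen vertex and is declared mutable (so that the separation formula applies in the direction $k$), one writes $\restr{W_k}{\mathcal{X}_\Sigma}$ as a product of the monomial $\prod_i X_i(\Sigma)^{-c_{i,k}}$ determined by the $c$-vector and of powers $F_k(X_1^{-1}(\Sigma),\ldots)^{\langle e_i,e_k\rangle_{\tilde\Sigma_k}}$ of the $F$-polynomial evaluated at the inverted variables. Three separate non-negativity facts then finish the proof: the $F$-polynomial is an honest polynomial, so its evaluation at the $X_i^{-1}$ lies in $\mathbb{C}[X_i^{-1}]$; the exponents $\langle e_i,e_k\rangle_{\tilde\Sigma_k}$ are non-negative because $\Sigma_k$ is optimized for $k$; and the entries $c_{i,k}$ are non-negative because the mutation path from $\Sigma$ to $\Sigma_k$ never mutates at $k$. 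To repair your write-up you should delete the monomiality claim and the induction built on it, and extract from the separation formula only the sign information above.
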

\begin{proof} 
We proof the statement for an arbitrary $k\in M_0$. Let $\Sigma_k$ be a seed of $\mathcal{X}$ which is optimized for $k$. Assume that $\Sigma_k$ can be obtained from $\Sigma_0$ by the sequence of mutations at $v_{j_1},v_{j_2},\ldots,v_{j_m}$ and assume that $\Sigma$ can be obtained from $\Sigma_0$ by the sequence of mutations at $v_{\ell_1},v_{\ell_2},\ldots,v_{\ell_p}$.

Let $\tilde{\Sigma}_0=(\tilde{\Lambda}, \left<,\right>_{\tilde{\Sigma}_0}, \{e_k\}_{k\in \tilde{M}})$ where $\tilde{M}=(M\setminus M_0) \cup k$, $\tilde{\Lambda}$ is the sublattice of $\Lambda$ spanned by $e_j$ with $j\in \tilde{M}$ and $\left<,\right>_{\tilde{\Sigma}_0}$ is the restriction of $\left<,\right>_{\Sigma_0}$. We define all vertices $v_j$ of $\Gamma_{\tilde{\Sigma}}$ to be mutable. Hence the cluster variety $\tilde{\mathcal{X}}$ obtained from this initial datum has no frozen cluster variables. Moreover we denote by $\tilde{\Sigma}$ the seed of $\tilde{\mathcal{X}}$ which is obtained from the initial seed $\tilde{\Sigma}_0$ by mutations at the sequence $v_{\ell_1},v_{\ell_2},\ldots,v_{\ell_p}$ and be $\tilde{\Sigma_k}$ the seed of $\tilde{\mathcal{X}}$ which is obtained from the initial seed $\tilde{\Sigma}_0$ by mutations at the sequence $v_{\ell_1},v_{\ell_2},\ldots,v_{\ell_p}$. In other words $\tilde{\Sigma}$ ($\tilde{\Sigma_k}$) are obtained from the same sequence of mutation at vertices $v_j$, $j\in \tilde{M}\subset M$ as $\Sigma$ ($\Sigma_k$, respectively,) is obtained from $\Sigma_0$.

We define $Y(\tilde{\Sigma})=\{y_i(\tilde{\Sigma}) \mid i \in \tilde{M}\}$ to be the $Y$-variables of the corresponding cluster algebra of geometric type as defined in \cite[Section 5.7]{Ke13}. Comparing the mutation rule of $Y$-variables (see \cite[Equation 22]{Ke13}) with the one for $\mathcal{X}$-cluster variables given in \eqref{X-mutation}, we get by Fomin-Zelevinsky's separation formula (\cite{FZ2}, see also \cite[Theorem 5.7]{Ke13}) and the definition of $W_k$ (Definition \ref{potdefi})
 \begin{align} \notag &  \qquad \qquad \qquad \qquad \displaystyle\restr{W_k}{\mathcal{X}_{{\Sigma}}}= \displaystyle\restr{W_k}{\mathcal{X}_{\tilde{\Sigma}}}= \\ \label{Fpol} 
 &  \prod_{i\in \tilde{M}}X_i(\Sigma)^{-c_{i,k}(\tilde{\Sigma}_k)}\prod_{j\in \tilde{M}}F_k(\Sigma)(X^{-1}_1(\Sigma),\ldots,X^{-1}_n(\Sigma))^{\left<e_i,e_k\right>_{\tilde\Sigma_k}}.\end{align}
Here $F_k(\tilde{\Sigma})$ is the $F-$polynomial at the seed $\tilde{\Sigma}$ as defined in \cite[Section 5.4]{Ke13} and $(c_{1,k}\ldots c_{n,k})^{\text{tr}}$ is the $c-$vector as defined in \cite[Section 5.3]{Ke13}. Since the $F-$polynomial is a honest polynomial and $\left<e_i,e_k\right>_{\tilde\Sigma_k}\ge 0$ for all $i$ due to the fact that $\Sigma_k$ is optimized for the vertex $v_k$, it remains to show that $c_{i,k}\ge 0$ for all $i$. This follows from the fact that in the sequence of mutations from $\tilde{\Sigma}$ to $\tilde{\Sigma}_k$ we have never mutated at $k$.
\end{proof}

\section{String cones and potential functions}

Let $G$ be a simply connected complex semisimple Lie group with Lie algebra $\mathfrak{g}$. Let $H$ be a maximal torus with Lie algebra $\mathfrak{h}$. Let $B,B_{-}$ denote a pair of opposite Borel subgroups of $G$ with $B\cap B_{-}=H$ and $N\subset B$ be the unipotent radical of $B$. The \emph{reduced double Bruhat cell $L^{e,w_0}$ associated to $e$ and $w_0$} is defined as:
$$L^{e,w_0}=N \cap B_{-}w_0 B_{-}.$$

Following Berenstein-Fomin-Zelevinsky, Fomin-Zelevinsky and Fock-Goncharov \cite{FZ, BFZ2, FZ2, FG} we endow $L^{e,w_0}$ with the structure of a cluster variety. 

\begin{definition}\label{clustergraph} Following \cite{BFZ2} we associate to every reduced word $\ii=(i_1,i_2,\ldots,i_N)\in R(w_0)$ a seed $\Sigma_{\ii}$ of $L^{e,w_0}$ and the corresponding quiver $\Gamma_{\ii}=\Gamma_{\Sigma_{\ii}}$ with vertex set $\{v_k \mid k \in N\}$. For an index $k\in N$ we denote by $k^+=k^+_{\ii}$ the smallest index $\ell \in M$ such that $k<\ell$ and $i_{\ell}=i_k$. If no such $\ell$ exists, we set $k^+=N+1$. Two vertices $v_k,v_{\ell}$, $k<\ell$ are connected by an edge in $\Gamma_{\ii}$ if and only if $\{k^+,\ell^+\}\cup [N]\ne \emptyset$ and one of the two conditions are satisfied
\begin{itemize}
\item[(i)] $\ell=k^+$,
\item[(ii)] $\ell < k^+ <\ell^+$.
\end{itemize}
An edge of type $(i)$ is directed from $k$ to $\ell$ and an edge of type $(ii)$ is directed from $\ell$ to $k$. The set of frozen vertices of $\Gamma_{\ii}$ is given by all $v_k$ such that $k^+=N+1$.
\end{definition}

We recall the relation between seeds associated to different reduced words. 

\begin{lemma}[{\cite[Lemma 4.6]{GKS20}}]\label{braid1} Let $\mathbf{j}\in R(w_0)$ be obtained from $\ii\in {R}(w_0)$ by a $3$-term move in position $k$. Then the swapping of vertex $v_k$ with vertex $v_{k+1}$ is an isomorphism of quivers $\Gamma_{\mathbf{j}}\cong\mu_{k-1} \Gamma_{\ii}.$
\end{lemma}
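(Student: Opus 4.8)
The plan is to establish the isomorphism by a direct, local comparison, writing $\sigma=(k\;k+1)$ for the transposition of vertices and proving the sharper statement $\mu_{k-1}\Gamma_{\ii}=\sigma(\Gamma_{\mathbf j})$. A $3$-term move changes $\ii$ only in the three letters at positions $k-1,k,k+1$, turning the colours $(a,b,a)$ into $(b,a,b)$ with $c_{a,b}=-1$. Let $m_a$ (resp. $m_b$) be the largest index $<k-1$ of colour $a$ (resp. $b$), and put $p:=(k+1)^+_{\ii}$ and $q:=k^+_{\ii}$, the first $a$ resp. first $b$ after the window; note $p=k^+_{\mathbf j}$ and $q=(k+1)^+_{\mathbf j}$. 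Set $S:=\{m_a,m_b,k-1,k,k+1\}$ and $T:=S\cup\{p,q\}$. First I would observe that the colour-successor function $\ell\mapsto\ell^+$ of Definition \ref{clustergraph} changes between $\ii$ and $\mathbf j$ only at the indices in $S$, and only within the window (its old and new values lie in $\{k-1,k,k+1\}$). Since an arrow of $\Gamma_{\ii}$ depends only on the successors of its two endpoints, this confines all discrepancies, and also confines the effect of $\mu_{k-1}$ (which touches only arrows at $v_{k-1}$ or between neighbours of $v_{k-1}$). So it suffices to match the two quivers on $T$ and to treat arrows meeting a vertex $v_w$ with $w\notin T$ separately.

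For the core I would first record, straight from Definition \ref{clustergraph}, that in $\Gamma_{\ii}$ the vertex $v_{k-1}$ has as neighbours exactly those of $m_a,m_b,k,k+1$ that exist (the equality $(k-1)^+=k+1$ pins any inclined neighbour inside $S$), with incident arrows $m_a\to k-1$, $k-1\to k+1$, $k\to k-1$, $k-1\to m_b$. Applying the mutation rule following Definition \ref{clusterquiver} I would reverse these four arrows, adjoin the composites $m_a\to k+1$, $m_a\to m_b$, $k\to k+1$, $k\to m_b$ along the length-two paths through $v_{k-1}$, cancel $2$-cycles against the arrows already present among the neighbours (namely $m_b\to k$, together with $m_b\to m_a$ if $m_a<m_b$ and $k+1\to k$ if $q<p$), and delete arrows between frozen vertices. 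Separately I would list the arrows of $\Gamma_{\mathbf j}$ on $T$ from the definition and apply $\sigma$. Both sides then carry exactly $k-1\to m_a$, $k+1\to k-1$, $k-1\to k$, $m_b\to k-1$, $m_a\to k+1$, $k+1\to p$ and $k\to q$ (the last two whenever $p,q\le N$), together with $m_a\to m_b$ precisely when $m_b<m_a$ and $k\to k+1$ precisely when $p<q$. This agreement is the heart of the proof.

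Next I would dispose of every arrow meeting a vertex $v_w$ with $w\notin T$. The key point is that such a $w$ satisfies $w^+\notin\{k-1,k,k+1\}$, since an index whose colour-successor falls in the window is forced into $S$. Consequently $v_w$ is not a neighbour of $v_{k-1}$, so $\mu_{k-1}$ leaves all its arrows unchanged, and each arrow incident to $v_w$ is carried by $\sigma$ onto the corresponding arrow of $\Gamma_{\ii}$. For a partner outside the window this is because the interleaving conditions of Definition \ref{clustergraph} cannot register the change of successors (all moved values lie in the window while $w^+$ does not), so the arrow is literally the same in $\Gamma_{\ii}$ and $\Gamma_{\mathbf j}$ and is fixed by $\sigma$; for the window partners $v_k$ and $v_{k+1}$ it is because $\sigma$ interchanges them while the post-window successors $p$ (of the colour-$a$ vertex) and $q$ (of the colour-$b$ vertex) are preserved across the move.

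The main obstacle is the case-bookkeeping hidden in the core step. I must rerun it in every boundary configuration in which $m_a$, $m_b$, $p$ or $q$ fails to exist or equals $N+1$ (so that $v_k$ or $v_{k+1}$ is frozen), checking that the $2$-cycle cancellations and the deletion of frozen--frozen arrows produce precisely the arrow set read off from $\Gamma_{\mathbf j}$. The most delicate instance is the conditional crossing arrow between $v_k$ and $v_{k+1}$: on the $\Gamma_{\mathbf j}$ side it is an inclined arrow present exactly when $p<q$, whereas on the $\Gamma_{\ii}$ side it is created by the composite $k\to k-1\to k+1$ and then survives or is annihilated according to whether the pre-existing arrow $k+1\to k$ (present exactly when $q<p$) is there to cancel it. Verifying that these two descriptions coincide in all orderings of $p$ and $q$, and remain compatible with freezing, is where the real care is needed.
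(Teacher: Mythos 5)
The paper does not actually prove this lemma: it is quoted verbatim from \cite[Lemma 4.6]{GKS20}, so there is no in-text argument to compare yours against. Your direct local verification is the natural proof and, as far as I can check against Definitions \ref{clusterquiver} and \ref{clustergraph}, it is correct: the neighbours of $v_{k-1}$ in $\Gamma_{\ii}$ are indeed exactly the existing elements of $\{m_a,m_b,k,k+1\}$ (with arrows $m_a\to k-1$, $k-1\to k+1$, $k\to k-1$, $k-1\to m_b$), the four composites and four reversals you list are right, the cancellations against $m_b\to k$, against $m_b\to m_a$ when $m_a<m_b$, and against $k+1\to k$ when $q<p$ come out exactly as in $\sigma(\Gamma_{\mathbf j})$, and your observation that any $w$ with $w^+\in\{k-1,k,k+1\}$ already lies in $S$ correctly localises all discrepancies. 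Two small imprecisions, neither fatal: (a) your parenthetical claim that the changed successor values all lie in the window is false for $w=k,k+1$, whose successors are $p$ and $q$ outside the window --- you do handle this correctly later via the swap $\sigma$ and the invariance of $p,q$, but the blanket statement should be weakened; (b) the assertion that both quivers carry \emph{exactly} the listed arrows on $T$ overstates things, since $T$ may support further spectator arrows (e.g.\ between $k$ and $p$ when $p<q<p^+$, or between $p$ and $q$); these are neither incident to $v_{k-1}$ nor joining two of its neighbours, so they are untouched by $\mu_{k-1}$ and matched by $\sigma$, but they belong to the ``unchanged'' bookkeeping of your third paragraph rather than to the core list. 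With those caveats, and the degenerate cases ($m_a$, $m_b$ absent, $p$ or $q$ equal to $N+1$) checked as you indicate, the argument is complete.
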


From Definition \ref{clustergraph} of a seed associated to a reduced word and Lemma \ref{braid1} we get immediately the following lemma.

\begin{lemma}\label{opsequence} Let $\ii=(i_1,\ldots,i_N)\in R(w_0)$. Then $\Gamma_{\ii}$ is optimized for the frozen vertex $v_N$. Moreover, for any $\mathbf{j}\in R(w_0)$ and any frozen cluster variable there exists a sequence of mutations to a seed which is optimized for this cluster variable. All seeds appearing in this sequence are of the form $\Gamma_{\ii'}$ for some $\ii'\in R(w_0)$.
\end{lemma}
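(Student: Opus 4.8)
The plan is to treat the three assertions in turn, reducing everything to the combinatorics of Definition \ref{clustergraph} together with the braid-move compatibility of Lemma \ref{braid1}.

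First I would verify that $\Gamma_{\ii}$ is optimized for $v_N$ by direct inspection. Since $N$ is the maximal index we have $N^+=N+1$, so $v_N$ is frozen, and any edge incident to $v_N$ has $v_N$ as its larger endpoint, i.e. $v_N$ plays the role of $\ell$ in Definition \ref{clustergraph}. A type (ii) edge would require $N<k^+<N^+=N+1$, which is impossible; hence every edge at $v_N$ is of type (i) with $k^+=N$ and is directed $k\to N$. As $k^+=N\ne N+1$, the neighbour $v_k$ is mutable and is the source of the arrow, which is exactly the optimization condition of Definition \ref{potdefi}.

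For the second assertion I would first record that for every $i\in I$ there is a reduced word for $w_0$ ending in $i$: since $w_0$ sends every positive root to a negative one we have $\ell(w_0 s_i)=N-1$, so appending $i$ to any reduced word of $w_0 s_i$ yields a reduced word $\ii'$ with $i_N'=i$. In $\Gamma_{\ii'}$ the vertex carrying the $i$-th frozen variable is then $v_N$, which by the first part is optimized, so it suffices to connect an arbitrary $\mathbf{j}$ to such an $\ii'$ through seeds of the form $\Gamma_{\ii''}$. Here I would invoke the Tits theorem to write the passage $\mathbf{j}\to\ii'$ as a sequence of $2$- and $3$-term moves and translate each move into a cluster operation. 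A $3$-term move at position $k$ is, by Lemma \ref{braid1}, the mutation $\mu_{k-1}$ followed by the relabeling swap $v_k\leftrightarrow v_{k+1}$, so the outcome is again of the form $\Gamma_{\ii''}$. A $2$-term move at position $k$ (where $c_{i_k,i_{k+1}}=0$) requires no mutation: one checks directly from Definition \ref{clustergraph} that swapping $v_k\leftrightarrow v_{k+1}$ is an isomorphism of quivers $\Gamma_{\ii}\cong\Gamma_{\mathbf{j}}$, since the commuting letters $i_k,i_{k+1}$ make the $k^+$-data symmetric under the swap. Concatenating these steps produces the desired mutation sequence from $\Gamma_{\mathbf{j}}$ to $\Gamma_{\ii'}$, all of whose seeds are of the form $\Gamma_{\ii''}$; as frozen cluster variables are unchanged by mutation, the $i$-th frozen variable of $\Gamma_{\mathbf{j}}$ is carried to $v_N$ in $\Gamma_{\ii'}$ and is optimized there.

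The step I expect to be the main obstacle is the routine but delicate verification that a $2$-term move acts as a pure relabeling isomorphism at the level of the quiver, and that across the whole sequence the targeted frozen variable is tracked consistently to position $N$. One must confirm that no spurious edge is created or destroyed between $v_k$, $v_{k+1}$, and their neighbours under the swap; this follows from $c_{i_k,i_{k+1}}=0$ together with a case analysis of the $k^+$-values, but it is the only point where the bookkeeping of Definition \ref{clustergraph} has to be carried out in detail.
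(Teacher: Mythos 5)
Your proof is correct and follows exactly the route the paper intends: the paper states this lemma without proof, asserting it is immediate from Definition \ref{clustergraph} and Lemma \ref{braid1}, and your argument (direct inspection of the arrows at $v_N$, existence of a reduced word ending in any given $i$, Tits' theorem, $3$-term moves as mutations via Lemma \ref{braid1}, and $2$-term moves as pure relabelings) is precisely the omitted verification. The only caveat is that the $2$-term-move relabeling claim requires the type (ii) edge condition in Definition \ref{clustergraph} to include the standard Berenstein--Fomin--Zelevinsky requirement $c_{i_k,i_\ell}\ne 0$ (evidently dropped by typo in the paper), which you implicitly and correctly use.
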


We fix a reduced word $\ii\in R(w_0).$ In the following we explain the relation between the string cone inequalities and the potential function on $\mathcal{X}$ with initial datum given by Definition \ref{clustergraph}. 

\begin{definition} We define $\widehat{\text{CA}}_{\ii}\in \text{Hom} ((\mathbb{C}^*)^{\Delta^+_{\mathbf{i}}}, {\mathcal{X}}_{\Sigma_{\ii}})$ as follows
$$\widehat{\text{CA}}_{\ii}(x)_k=\displaystyle\prod_{\ell\in [N]}x_{\ell}^{\sym{k}{\ell}},$$
where $\sym{k}{\ell}:=-c_{i_k,i_{\ell}}\begin{cases} 1 & \text {if }k<\ell<k^+, \\ \frac{1}{2} & \text{if }\ell=k \text{ or }\ell=k^+, \\ 0 & \text{ else.} \end{cases}$
\end{definition}

\begin{proposition}\label{iso} The map $\widehat{\text{CA}}_{\ii}\in \text{Hom} ((\mathbb{C}^*)^{\Delta^+_{\mathbf{i}}}, \mathcal{X}_{\Sigma_{\ii}})$ is an isomorphism of algebraic tori and satisfies
$\varsigma_{\mathbf{i},i}=\restr{W_i}{\mathcal{X}_{\ii}}\circ \widehat{\text{CA}}_{\ii}$.
\end{proposition}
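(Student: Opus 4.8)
<br>

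The plan is to establish the two claims of Proposition \ref{iso} separately: first that $\widehat{\text{CA}}_{\ii}$ is an isomorphism of tori, and second that it intertwines the restricted potential $\restr{W_i}{\mathcal{X}_{\ii}}$ with $\varsigma_{\mathbf{i},i}$. For the first claim, since $\widehat{\text{CA}}_{\ii}$ is a monomial map between tori of equal dimension $N$, it suffices to show that the integer matrix $(\sym{k}{\ell})_{k,\ell\in[N]}$ encoding the exponents is unimodular (or at least has determinant $\pm 1$, so that the induced map on cocharacter lattices is a lattice isomorphism). The matrix is lower-triangular-ish once we organize the indices by the residue $i_k$ and the ``next occurrence'' structure $k\mapsto k^+$: the diagonal entries are $-\tfrac{1}{2}c_{i_k,i_k}=-1$ (using $c_{i,i}=2$), and the off-diagonal structure is governed by the intervals $(k,k^+)$. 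I would verify the determinant is a unit by exhibiting an explicit ordering of $[N]$ under which the exponent matrix becomes triangular with $\pm 1$ on the diagonal; this is a finite combinatorial check using the definition of $k^+$ and the support condition in $\sym{k}{\ell}$.

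For the second and main claim, the strategy is to use the characterization of $\varsigma_{\mathbf{i},i}$ from Definition \ref{varsigma} together with the behaviour of $\restr{W_i}{\mathcal{X}_{\ii}}$ under the cluster structure. By Lemma \ref{opsequence}, the quiver $\Gamma_{\ii}$ is already optimized for the frozen vertex $v_N$ precisely when $i_N=i$, so in that case $\restr{W_i}{\mathcal{X}_{\Sigma_{\ii}}}=X_N^{-1}(\Sigma_{\ii})$ directly from Definition \ref{potdefi}. The first step is therefore to check the base case: when $i_N=i$, I would compute $X_N^{-1}(\Sigma_{\ii})\circ\widehat{\text{CA}}_{\ii}$ and confirm it equals the coordinate function $x_N$, matching condition (1) of Definition \ref{varsigma}. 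Since $v_N$ is frozen with $N^+=N+1$, only the diagonal term $\sym{N}{N}=-\tfrac{1}{2}c_{i,i}=-1$ contributes to the $N$-th coordinate of $\widehat{\text{CA}}_{\ii}$, giving $\widehat{\text{CA}}_{\ii}(x)_N=x_N^{-1}$, and hence $X_N^{-1}\circ\widehat{\text{CA}}_{\ii}=x_N$ as required.

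The harder part is verifying condition (2), namely compatibility with the piecewise-linear maps $\Psi^{\mathbf{j}}_{\mathbf{i}}$ under tropicalization. Here the plan is to reduce to the two generating moves. For a $2$-term move there is nothing subtle: both the seed and the string-cone map act by a simple transposition of coordinates. For a $3$-term move at position $k$, I would invoke Lemma \ref{braid1}, which identifies $\Gamma_{\mathbf{j}}$ with $\mu_{k-1}\Gamma_{\ii}$ up to swapping $v_k$ and $v_{k+1}$. The key is that the potential $W_i$ is a well-defined global function on the cluster variety $\mathcal{X}$ (independent of the seed), so its restrictions to $\mathcal{X}_{\Sigma_{\ii}}$ and $\mathcal{X}_{\Sigma_{\mathbf{j}}}$ are related by the birational $\mathcal{X}$-cluster mutation \eqref{X-mutation}. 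I would then show that conjugating this cluster mutation by the two torus isomorphisms $\widehat{\text{CA}}_{\ii}$ and $\widehat{\text{CA}}_{\mathbf{j}}$ reproduces exactly the map $\Psi^{\mathbf{j}}_{\mathbf{i}}$ after tropicalization, i.e. that the diagram relating $\widehat{\text{CA}}_{\ii}$, $\widehat{\text{CA}}_{\mathbf{j}}$, the $\mathcal{X}$-mutation $\check{\mu}_{k-1}^*$, and $\Psi^{\mathbf{j}}_{\mathbf{i}}$ commutes on tropicalizations. This is the main obstacle: it requires a careful bookkeeping comparison between the explicit tropicalized mutation formula (the $\max/\min$ expressions coming from $(1+X^{-1})$ terms) and the explicit $3$-term formulas $x'_{k-1}=\max(x_{k+1},x_k-x_{k-1})$, $x'_k=x_{k-1}+x_{k+1}$, $x'_{k+1}=\min(x_{k-1},x_k-x_{k+1})$ from Definition \ref{eq:strans}. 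Since both $\varsigma_{\mathbf{i},i}$ and the construction via $W_i$ are pinned down by the same base case and the same transition rule across all of $R(w_0)$, matching them on generators and propagating by Tits connectedness yields the identity $\varsigma_{\mathbf{i},i}=\restr{W_i}{\mathcal{X}_{\ii}}\circ\widehat{\text{CA}}_{\ii}$ for every reduced word.
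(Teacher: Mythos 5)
Your strategy is essentially the one underlying the paper's proof, but the paper does not carry it out from scratch: it simply observes that the statement is the specialization of \cite[Theorem 7.5 and Lemma 8.1]{GKS20} obtained by setting the extra frozen variables $X_k$, $k<0$, and the highest-weight variables $\lambda_j$ of the larger double Bruhat cell picture to $1$, the compatibility with a single $3$-term move being exactly the right-hand commuting square of \cite[Lemma 7.4]{GKS20}. Your outline (unimodularity of the exponent matrix; the base case $i_N=i$ giving $X_N^{-1}\circ\widehat{\text{CA}}_{\ii}=x_N$; propagation along $2$- and $3$-term moves using Lemma \ref{braid1}, Lemma \ref{opsequence} and the fact that $W_i$ is a global function on $\mathcal{X}$; conclusion by the uniqueness clause of Definition \ref{varsigma} and Tits connectedness) is correct and is precisely the skeleton of that cited argument. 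Two small remarks. First, the unimodularity is immediate: since $\sym{k}{\ell}=0$ unless $k\le \ell\le k^+$, the exponent matrix is already upper triangular in the standard order with diagonal entries $\sym{k}{k}=-\tfrac12 c_{i_k,i_k}=-1$, so no reordering is needed. Second, the one step you flag as ``careful bookkeeping'' --- that conjugating the $\mathcal{X}$-mutation $\check{\mu}_{k-1}^*$ by $\widehat{\text{CA}}_{\ii}$ and $\widehat{\text{CA}}_{\mathbf{j}}$ tropicalizes to the $3$-term formulas of Definition \ref{eq:strans} --- is where all the actual content of the proposition sits; as written your proposal asserts that this check succeeds rather than performing it, so to be self-contained you would either have to do that computation explicitly or, as the authors do, cite \cite[Lemma 7.4]{GKS20} for it.
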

\begin{proof} 
The second part is proved analogously as \cite[Theorem 7.5]{GKS20} noting that the right diagram in Lemma 7.4 of op. cit. also commutes if we specialize the variables $X_k$ of $\mathcal{X}_{\Sigma_{\ii}}$ with $k<0$ and the variables $\lambda_j$, $j\in I$ of $\text{gr}\mathcal{S}_{\ii}$ to $1$.

The first part follows by Lemma 8.1 of op. cit. applying the same specialization of variables.
\end{proof}

\section{Proof of the sufficient criterion for non-redundancy of inequalities}\label{proofs}

Let $\mathcal{M}_{i,\ii}$ be the set of monomials in $\restr{W_i}{{\mathcal{X}}_{\ii}}$. We abbreviate the cluster variable $X_k({\Sigma_{\ii}})$ by $X_k(\ii)$. For a fixed $i\in I$ we denote a vertex $v_k$ of $\Gamma_{\ii}$ by $v_{i,\ell}$ if $k\in [N]$ and $\beta_{i,\ell}=\beta_k \in \Delta^+_{\ii}$. If we want to stress that we fix the reduced word $\ii \in R(w_0)$, we also write $v_{i,\ell}(\ii)$.
	
\begin{lemma}\label{prepare} \begin{enumerate}
\item For every $\mathfrak{m} \in \mathcal{M}_{i,\ii}$, we have $\mathfrak{m}=X^{-1}_{i,m_i}(\ii)\mathfrak{m}'$, where $\mathfrak{m}'$ is a Laurent monomial in which $X^{-1}_{\ii,m_i}$ does not appear with a strictly positive exponent.
\item Let $j\in I$ and $m_0\in \mathcal{M}_{j,\ii}$. Assume that the inequality arising from $m_0$ is redundant, i.e. 
$$[m_0]_{trop}=\sum_{i\in I}\sum_{\mathfrak{m}\in \mathcal{M}_{ i ,\ii}}r_{\mathfrak{m}}[\mathfrak{m}]_{trop}$$ 
with $r_{\mathfrak{m}}\in \mathbb{R}_+$. Then $\mathfrak{m}\in \mathcal{M}_{i,\ii}$ with $i\ne j$ implies that $r_{\mathfrak{m}}=0$.
\end{enumerate}
\end{lemma}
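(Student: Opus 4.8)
The plan is to prove (1) by reading off the exponent of the frozen variable $X_{i,m_i}(\ii)$ from the separation formula, and then to deduce (2) from (1) by a short linear argument after extracting one more consequence of the proof of Proposition~\ref{polynomial}. For (1) I would work inside the auxiliary cluster variety $\tilde{\mathcal X}$ with index set $\tilde M=(M\setminus M_0)\cup\{m_i\}$ built in that proof, in which the frozen vertex $v_{i,m_i}$ is made mutable but the chosen mutation sequence from $\tilde\Sigma$ (the image of $\Sigma_{\ii}$) to the optimized seed $\tilde\Sigma_i$ never mutates at $v_{i,m_i}$. In the expression \eqref{Fpol} for $\restr{W_i}{\mathcal X_{\ii}}$ the exponent of $X_{i,m_i}(\ii)$ has two sources: the monomial prefactor, whose contribution is minus the diagonal $C$-matrix entry at $v_{i,m_i}$, and the $F$-polynomial factors evaluated at the $X_\ell^{-1}(\ii)$, whose contribution is non-positive. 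I would first show that the diagonal entry equals $1$: since the initial $C$-matrix is the identity and the sequence never mutates at $v_{i,m_i}$, an easy induction on the $C$-matrix mutation rule shows that the row indexed by $v_{i,m_i}$ stays a standard basis vector, the only possible changes being proportional to off-diagonal entries of that row, which remain $0$. I would then show the $F$-polynomial factors are free of $X_{i,m_i}^{-1}$: again because $v_{i,m_i}$ is never mutated, its initial coefficient $X_{i,m_i}^{-1}(\ii)$ is never fed into the $y$-dynamics of the other vertices, so the $F$-polynomials occurring in \eqref{Fpol} do not involve it (and the $F$-polynomial of the unmutated vertex is $1$). Together these yield that $X_{i,m_i}(\ii)$ occurs with exponent exactly $-1$ in every monomial of $\restr{W_i}{\mathcal X_{\ii}}$, which is (1).

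The additional consequence I would extract from the proof of Proposition~\ref{polynomial} is a locality statement: for each $i'\in I$ the Laurent polynomial $\restr{W_{i'}}{\mathcal X_{\ii}}$ involves only the variables $X_\ell(\ii)$ with $\ell\in(M\setminus M_0)\cup\{m_{i'}\}$. This is immediate, because that proof computes $\restr{W_{i'}}{\mathcal X_{\ii}}$ inside the variety $\tilde{\mathcal X}$ assembled from exactly this index set, whose coordinate ring does not contain the other frozen variables. In particular no $X_{i,m_i}(\ii)$ with $i\ne i'$ occurs in $\restr{W_{i'}}{\mathcal X_{\ii}}$. Combined with (1) this pins down, for every fixed $i$, the $X_{i,m_i}(\ii)$-exponent of all monomials across the $\restr{W_{i'}}{\mathcal X_{\ii}}$: it is $-1$ on the monomials of $\mathcal M_{i,\ii}$ and $0$ on the monomials of $\mathcal M_{i',\ii}$ for every $i'\ne i$.

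For (2) I would use that the tropicalization of a Laurent monomial is the linear functional given by its exponent vector, so the hypothesis $[m_0]_{trop}=\sum_{i'\in I}\sum_{\mathfrak m\in\mathcal M_{i',\ii}}r_{\mathfrak m}[\mathfrak m]_{trop}$ is an identity of exponent vectors. Fixing $i\ne j$, I would compare the components indexed by $X_{i,m_i}(\ii)$. On the left $m_0\in\mathcal M_{j,\ii}$ contributes $0$ since $j\ne i$; on the right, by the previous paragraph each $\mathfrak m\in\mathcal M_{i',\ii}$ with $i'\ne i$ contributes $0$ and each $\mathfrak m\in\mathcal M_{i,\ii}$ contributes $-1$. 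This yields $0=-\sum_{\mathfrak m\in\mathcal M_{i,\ii}}r_{\mathfrak m}$, and since every $r_{\mathfrak m}\ge 0$, each such $r_{\mathfrak m}$ vanishes. As $i\ne j$ was arbitrary, (2) follows.

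The main obstacle is (1), and in particular pinning the exponent to exactly $-1$ rather than merely to some negative value. Both inputs---the diagonal $C$-matrix entry being $1$ and the absence of $X_{i,m_i}^{-1}$ from the $F$-polynomials---hinge on the bookkeeping that the mutation path avoids $v_{i,m_i}$; the delicate step is to verify carefully, from the $C$-matrix and $F$-polynomial mutation rules, that never mutating at this vertex keeps its $C$-matrix row standard and keeps its coefficient out of all $F$-polynomials appearing in \eqref{Fpol}. Once (1) and the locality statement are in place, the projection argument for (2) is routine.
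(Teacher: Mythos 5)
Your argument is correct, and for part (2) it coincides with the paper's: both proofs reduce to the observation that $X_{i,m_i}(\ii)$ occurs with exponent $0$ in every monomial of $\mathcal{M}_{i',\ii}$ for $i'\ne i$ and with strictly negative exponent in every monomial of $\mathcal{M}_{i,\ii}$, and then compare the $X_{i,m_i}$-components of the exponent vectors (your ``locality'' statement is exactly the paper's remark that a variable $X_\ell$ can only enter the pullback at the step where one mutates at $v_\ell$, so no frozen variable other than $X_{i,m_i}$ ever appears in $\restr{W_i}{\mathcal{X}_{\ii}}$). For part (1), however, you take a genuinely different route. The paper argues by induction along the mutation sequence from an optimized seed $\Sigma_{\ii'}$ (with $i'_N=i$) back to $\Sigma_{\ii}$, using the $\mathcal{X}$-mutation rule \eqref{X-mutation} directly: since $v_{i,m_i}$ is frozen and never a mutation vertex, each step $\check{\mu}^*_{k}$ sends $X^{-1}_{i,m_i}$ to $X^{-1}_{i,m_i}$ times a factor free of $X_{i,m_i}$ and introduces no new occurrences of $X_{i,m_i}$ elsewhere, so the factorization $\mathfrak{m}=X^{-1}_{i,m_i}\mathfrak{m}'$ propagates. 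You instead read the exponent off the separation formula \eqref{Fpol}, checking that the row of the $C$-matrix indexed by $v_{i,m_i}$ stays a standard basis vector and that the coefficient variable of $v_{i,m_i}$ never enters any $F$-polynomial, both because that vertex is never mutated; this bookkeeping is correct and yields the sharper conclusion that the exponent of $X_{i,m_i}(\ii)$ is exactly $-1$ in every monomial (the paper's induction in fact gives this too, though it only records the weaker form). The trade-off is that the paper's induction is more elementary and self-contained at the level of the $\mathcal{X}$-mutation rule, while your version reuses the $c$-vector and $F$-polynomial machinery already assembled in the proof of Proposition \ref{polynomial} and makes the role of the frozen variable completely explicit.
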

\begin{proof} Let $\ii'=(i'_1,\ldots,i'_N)\in R(w_0)$ be such that $i'_N=i$. Since $\ii'$ can be obtained from $\ii$ by a finite sequence of $2$-term and $3$-term moves, we can find by Lemma \ref{braid1} a sequence of mutations at vertices $(k_1,\ldots ,k_t)$ corresponding to these braid moves, transforming (up to relabeling coordinates) $\Sigma_{\ii}$ to $\Sigma_{\ii'}$. By Definition \ref{potdefi} and Lemma \ref{opsequence} we have
\begin{equation}\label{eq:movesop}
\restr{W_i}{{\mathcal{X}}_{\ii}}= \check{\mu}^*_{k_1} \circ \ldots \circ \check{\mu}^*_{k_t} X^{-1}_{i,m_i}(\ii').
\end{equation}
  
We prove $(1)$ by induction over $t$. The case $t=0$ is trivial. Let $\mathbf j$ be the reduced word we get after applying the sequence of mutations at the vertices $(k_{t-2},\ldots ,k_{1})$ to $\ii'$. We have
$$\restr{W_i}{{\mathcal{X}}_{\ii}}=\check{\mu}^*_{k_1} \sum_{i\in I}\sum_{\mathfrak{m}\in \mathcal{M}_{i,\mathbf{j}}} \mathfrak{m}.$$
By induction hypothesis, we have for every $\mathfrak{m}\in \mathcal{M}_{i,\mathbf j}$ that $\mathfrak{m}=X^{-1}_{i,m_i}(\mathbf{j})\mathfrak{m}'$, where $\mathfrak{m}'$ is a monomial in which $X^{-1}_{i,m_i}(\mathbf j)$ does not appear with a strictly positive exponent. Thus 
$$\check{\mu}^*_{k_1} \mathfrak{m}=\check{\mu}^*_{k_1}  X^{-1}_{i, m_i}(\mathbf j) \mathfrak{m}'.$$ 
Since $X_{i, m_i}(\mathbf{j})$ is frozen and does not appear with strictly positive exponent in $\mathfrak{m}'$, the cluster variable $X_{i, m_i}(\mathbf{i})$ does not appear with strictly positive exponent in any monomial of $\check{\mu}^*_{k_1}  \mathfrak{m}'$. 

Let $\ell \in N$ be such that $X_{i,m_i}(\mathbf j)=X_{\ell}(\mathbf{j})$. Then we have (again since $X_{i,m_i}(\mathbf j)$ is frozen)
\begin{equation*}
\check{\mu}^*_{k_1} X^{-1}_{ \ell}(\mathbf{j})= X_{\ell}^{-1}(\mathbf{i}) \left(1+X_k(\ii)^{-\text{sgn} \left<e_{\ell},e_k\right>_{\ii}}\right)^{-\left<e_{\ell},e_k\right>_{\ii}}.  
\end{equation*}
The first claim follows. 

To prove $(2)$ note that by the $\mathcal{X}$-cluster mutation rule \eqref{X-mutation} and \eqref{eq:movesop} we get that a variable $X_{\ell}$ divides any monomial of $\check{\mu}^*_{k_s} \circ \ldots \circ \check{\mu}^*_{k_t} X^{-1}_{i,m_i}(\ii)$ for $1\le s \le t$ if and only if either $X_{\ell}$ divides a monomial of $\check{\mu}^*_{k_{s+1}} \circ \ldots \circ \check{\mu}^*_{k_t} X^{-1}_{i,m_i}(\ii)$ or $s=\ell$. 

This implies in particular that $X_{i,m_i}(\ii)$ does not divide any element of $\mathcal{M}_{j,\ii}$.

On the other hand, we have have, by the first part of this lemma, that $X_{i,m_i}(\ii)$ divides any element of $\mathcal{M}_{i,\ii}$ which proves the second part.

\end{proof}

\begin{proof}[Proof of Proposition \ref{stringred}] Assume that
$$[m_0]_{trop}=\sum_{j\in I}\sum_{\mathfrak{m}\in \mathcal{M}_{ j ,\ii}(\varsigma)}r_{\mathfrak{m}}[\mathfrak{m}]_{trop}$$ 
with $r_{\mathfrak{m}}\in \mathbb{R}_+$. This implies by Proposition \ref{iso}

$$[ m_0 \circ \widehat{\text{CA}}^{-1}_{\ii}]_{trop}=\sum_{j\in I}\sum_{\mathfrak{m}\in \mathcal{M}_{ j ,\ii}(\varsigma)}r_{\mathfrak{m}}[\mathfrak{m} \circ \widehat{\text{CA}}^{-1}_{\ii}]_{trop}.$$

Now Lemma \ref{prepare} yields the claim.
\end{proof}

\begin{definition}\label{mf} Let $i\in I$ and $\ii \in R(w_0)$. We say that $\restr{W_i}{\mathcal{X}_{\ii}}$ is \emph{multiplicity-free}, if for all $\mathfrak{m}\in \mathcal{M}_{i,\ii}$, $\mathfrak{m}=\prod_{k=1}^N a_k X_k^{j_k}$, we have $|j_k|\le 1$. 

We say that $W$ is multiplicity-free for $\ii$ if $W_i$ is multiplicity-free for $\ii$ for all $i\in I$.
\end{definition}
Note that by Proposition \ref{polynomial} $|j_k|\le 1$ is equivalent to $j_k \ge -1$ in Definition \ref{mf}.

\begin{proposition}\label{nomult} Let $i\in I$ and $\ii \in R(w_0)$ and $\restr{W_i}{\mathcal{X}_{\ii}}$ be multiplicity-free. Then the set of inequalities 
$$\{[\mathfrak{m}]_{trop} (x) \ge 0 \mid \mathfrak{m}\in \mathcal{M}_{i,\mathbf i}\}$$
is non-redundant.
\end{proposition}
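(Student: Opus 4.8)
\textbf{Proof proposal for Proposition \ref{nomult}.}

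The plan is to show non-redundancy directly from the structural information provided by Lemma \ref{prepare}(1): every monomial $\mathfrak{m}\in\mathcal{M}_{i,\ii}$ is divisible by the single frozen variable $X_{i,m_i}^{-1}(\ii)$, and crucially this variable appears with exponent exactly $-1$ (here the multiplicity-free hypothesis guarantees $|j_k|\le 1$, and by Proposition \ref{polynomial} all exponents are $\le 0$, so the exponent of $X_{i,m_i}$ is exactly $-1$ in every monomial). Tropicalizing, this means that for every $\mathfrak{m}\in\mathcal{M}_{i,\ii}$ the coefficient of the coordinate dual to $e_{i,m_i}$ in the linear functional $[\mathfrak{m}]_{trop}$ is $+1$, whereas every other coordinate has coefficient in $\{-1,0,+1\}$. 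I would set up coordinates on $[\mathcal{X}_{\Sigma_{\ii}}]_{trop}=\mathbb{Z}^N$ indexed by the vertices $v_k$, so that each $[\mathfrak{m}]_{trop}$ is a $\{0,\pm 1\}$-vector (by multiplicity-freeness) with a $1$ in the $m_i$-entry.

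To prove non-redundancy, I would argue by contradiction: suppose some $[m_0]_{trop}$ with $m_0\in\mathcal{M}_{i,\ii}$ is a nonnegative combination $[m_0]_{trop}=\sum_{\mathfrak{m}\ne m_0} r_{\mathfrak{m}}[\mathfrak{m}]_{trop}$ with $r_{\mathfrak{m}}\in\mathbb{R}_{+}$ over the remaining inequalities. By Lemma \ref{prepare}(2) only monomials in $\mathcal{M}_{i,\ii}$ (not those coming from other $j\ne i$) can have nonzero coefficient, so the sum runs over $\mathcal{M}_{i,\ii}\setminus\{m_0\}$. Now I would evaluate both sides on carefully chosen cocharacters in $\mathbb{Z}^N$ to extract a contradiction. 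The key feature to exploit is that the monomials $\mathfrak{m}\in\mathcal{M}_{i,\ii}$, viewed as $\{0,\pm 1\}$-vectors, are distinct; since every one of them has a $+1$ in the common $m_i$-coordinate, they differ in the remaining coordinates. The plan is to find, for $m_0$, a coordinate direction (or a short combination of directions) on which $[m_0]_{trop}$ is strictly smaller than every other $[\mathfrak{m}]_{trop}$, forcing the nonnegative combination to overshoot.

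More concretely, I would use the separation/divisibility structure from the proof of Lemma \ref{prepare}: the monomials in $\mathcal{M}_{i,\ii}$ arise from successive $\mathcal{X}$-mutations applied to the single seed monomial $X_{i,m_i}^{-1}$, and at each mutation step the $F$-polynomial factor $(1+X_k^{\pm1})^{-\langle e_\ell,e_k\rangle}$ introduces new monomials differing in exactly the newly involved coordinate. Multiplicity-freeness says each such exponent stays in $\{0,-1\}$, which means the monomials are in bijection with a down-closed family of subsets of vertices, each contributing a distinguishing coordinate. For $m_0$ I would pick a maximal such subset (a monomial that is not "dominated" coordinatewise by any other), so that there is a coordinate where $[m_0]_{trop}$ takes value $-1$ while the competing monomials take value $0$; evaluating the putative relation on the corresponding cocharacter then yields $-1\ge 0$ after the other terms drop out, a contradiction. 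I expect the main obstacle to be exactly this combinatorial step: verifying that multiplicity-freeness forces the tropicalized monomials to be an affinely independent (or at least appropriately ``coordinate-distinguishable'') family, so that no one of them lies in the nonnegative cone generated by the others. This is where one must use the precise mutation-generated structure of $\mathcal{M}_{i,\ii}$ rather than merely the $\{0,\pm1\}$ bound, and I would isolate it as the technical heart of the argument.
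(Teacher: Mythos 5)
You have correctly assembled the two structural inputs the paper uses: by Proposition \ref{polynomial} together with multiplicity-freeness every $\mathfrak{m}\in\mathcal{M}_{i,\ii}$ has exponent vector in $\{0,-1\}^N$, and by Lemma \ref{prepare}(1) the exponent of the frozen variable $X_{i,m_i}(\ii)$ is exactly $-1$ in every monomial (so the corresponding coefficient of the linear functional $[\mathfrak{m}]_{trop}$ is $-1$, not $+1$ as you write; this is only a sign-convention slip). However, the step you yourself flag as ``the technical heart'' is a genuine gap, and the strategy you sketch for it does not work: you propose to find, for the given $m_0$, a single coordinate where $[m_0]_{trop}$ is strictly separated from \emph{every} competing monomial, after choosing $m_0$ ``maximal''. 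But $m_0$ is not yours to choose --- non-redundancy must be proved for every monomial --- and for a non-maximal one no such separating coordinate exists (e.g.\ the exponent vector $(-1,0,0)$ sitting among $(-1,-1,0)$, $(-1,0,-1)$, $(-1,-1,-1)$ is separated from no single competitor by any coordinate, yet its inequality is still non-redundant). Neither affine independence nor coordinatewise domination is the right invariant here.

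The missing idea is a normalization-plus-convexity argument that requires no special choice of monomial. Suppose $[\mathfrak{m}_0]_{trop}=\sum_{\mathfrak{m}\in J}r_{\mathfrak{m}}[\mathfrak{m}]_{trop}$ with $r_{\mathfrak{m}}>0$ and $\emptyset\ne J\subset\mathcal{M}_{i,\ii}\setminus\{\mathfrak{m}_0\}$. Evaluating at $-e_k$, where $v_k$ is the frozen vertex with $i_k=i$, gives
\begin{equation*}
1=\sum_{\mathfrak{m}\in J}r_{\mathfrak{m}},
\end{equation*}
since every monomial contributes exactly $1$ there by Lemma \ref{prepare}(1) and multiplicity-freeness. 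Now for \emph{any} coordinate $s$ the values $[\mathfrak{m}]_{trop}(-e_s)$ lie in $\{0,1\}$, so $[\mathfrak{m}_0]_{trop}(-e_s)$ is a convex combination of numbers in $\{0,1\}$: it equals $1$ only if every $\mathfrak{m}\in J$ takes the value $1$ at $-e_s$, and equals $0$ only if every $\mathfrak{m}\in J$ takes the value $0$. Hence each $\mathfrak{m}\in J$ agrees with $\mathfrak{m}_0$ in every coordinate, contradicting $\mathfrak{m}_0\notin J$. This is exactly the paper's proof (its two displayed ``cases'' are the two branches of this dichotomy), and it is precisely where multiplicity-freeness enters: without the bound $|j_s|\le 1$ the values at $-e_s$ need not lie in $\{0,1\}$ and the extremality argument collapses. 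Your invocation of Lemma \ref{prepare}(2) is harmless but not needed for this proposition, which only concerns the monomials of the single summand $\restr{W_i}{\mathcal{X}_{\ii}}$.
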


\begin{proof}
Let $\restr{W_i}{\mathcal{X}_{\ii}}$ be multiplicity-free and assume that the inequality arising from $m_0\in \mathcal{M}_{i,\ii}$ is redundant, i.e. there exists $\emptyset\ne J\subset \mathcal{M}_{i,\mathbf i}\setminus \{\mathfrak{m}_0\}$ such that
\begin{equation}\label{equ}
[\mathfrak{m}_0]_{trop}=\sum_{\mathfrak{m}\in J}r_{\mathfrak{m}}[\mathfrak{m}]_{trop}
\end{equation}
with $r_{\mathfrak{m}}\in \mathbb{R}_{>0}$. Let $k$ be such that $i_k=i$ and $i_{k}^+=N+1$ and let $e_{k}\in \mathbb{R}^N$ be defined as $(e_k)_j=\delta_{k,j}$. By the first part of Lemma \ref{prepare} and the fact that $\restr{W_i}{\mathcal{X}_{\ii}}$ is multiplicity-free, we get by plugging in $-e_k$ into \eqref{equ}: 
$$1=\sum_{\mathfrak{m}\in J}r_{\mathfrak{m}}.$$
Since $\emptyset\ne J\subset \mathcal{M}_{i,\mathbf i}\setminus \{\mathfrak{m}_0\}$, we can find $\mathfrak{m}_0\ne \mathfrak{m_1}\in J$ and $1 \le s \le N$ such that either (1) $[\mathfrak{m}_0](e_s)\ne 0$ and $[\mathfrak{m}_1](e_s)= 0$ or (2) $[\mathfrak{m}_0](e_s)=0$ and $[\mathfrak{m}_1](e_s)\ne 0$.

In the first case we get by Proposition \ref{polynomial} and the assumption that $\restr{W_i}{\mathcal{X}_{\ii}}$ is multiplicity-free by plugging in $-e_s$ into \eqref{equ}: 

$$0=1 + \sum_{\mathfrak{m}\in J\setminus{\mathfrak{m}_1}}r_{\mathfrak{m}}[\mathfrak{m}]_{trop}(-e_s).$$

Since $[\mathfrak{m}]_{trop}(-e_s)\ge 0$ by Proposition \ref{polynomial}, we obtain a contradiction.

In the second case, we get by by plugging in $-e_s$ into \eqref{equ} again using Proposition \ref{polynomial} and the assumption that $\restr{W_i}{\mathcal{X}_{\ii}}$ is multiplicity-free:

$$1= \sum_{\mathfrak{m}\in J\setminus{\mathfrak{m}_1}}r_{\mathfrak{m}}[\mathfrak{m}]_{trop}(-e_s).$$

By Proposition \ref{polynomial} and the assumption that $\restr{W_i}{\mathcal{X}_{\ii}}$ is multiplicity-free we have $0 \le [\mathfrak{m}]_{trop}(-e_s)\le 1$, and hence 

$$\sum_{\mathfrak{m}\in J\setminus{\mathfrak{m}_1}}r_{\mathfrak{m}}[\mathfrak{m}]_{trop}(- e_s)< \sum_{\mathfrak{m}\in J}r_{\mathfrak{m}}=1.$$
Again a contradiction.
\end{proof}

\begin{proof}[Proof of Theorem \ref{nomulti}]
Note that, by Proposition \ref{iso}, $\varsigma_{\mathbf{i},i}$ is multiplicity-free if and only if $\restr{W_i}{\mathcal{X}_{\ii}}$ is multiplicity-free. Moreover, the set of inequalities given in \eqref{stringconeine} is redundant if and only if the set of inequalities $\{[\mathfrak{m}]_{trop} (x) \ge 0 \mid \mathfrak{m}\in \mathcal{M}_{i,\mathbf i}\}$ is redundant. Therefore the claim follows from Proposition \ref{nomulti}.
\end{proof}

	\section{Multiplicity-free examples}

\subsection{Nice words and simply-braided words}\label{nicesec}
\subsubsection{Simply braided words}
 
Following \cite{SST} we define
	\begin{definition}\label{simply-braided} Let $i \in I$. We call $\ii\in R(w_0)$ simply-braided for $i$ if one can perform a sequence of braid moves changing $\ii$ to a reduced word $\ii'=(i'_1,\ldots,i'_N)\in R(w_0)$ with $i'_N=i$, and each move in the sequence is either \begin{itemize}
			\item a $2-$move, or
			\item a $3-$ move at position $k$ such that $\alpha_i=\beta_{k-1}$ in the $<_{\ii}$-order on $\Phi^+$, i.e. $\alpha_i$ is the leftmost root affected.
		\end{itemize}
		We call $\ii$ simply-braided if it is simply-braided for all $i\in I$. We call the above sequence from $\ii$ to $\ii'$ of $2$- and $3-$moves a simply-braided sequence.
		\end{definition}
		
\begin{remark} Note that we take a slightly different convention than \cite{SST} here. If $\ii$ satisfies Definition \ref{simply-braided} this implies that the reverse word is simply-braided in the sense of \cite{SST}.
\end{remark}

\begin{proposition}\label{tubes} Let $\ii\in R(w_0)$ be simply-braided $i$ and fix a simply-braided sequence. Let $\beta_{k_1},\ldots, \beta_{k_s}\in \Delta^+$ be indexed w.r.t. the $<_{\ii}$-order such that in the $j$-th $3-$term move in the simply braided sequence $\beta_{k_j}$ is the middle root affected. Then
			$$\restr{W_i}{{{\mathcal{X}}_{\ii}}}=X_{\alpha_i}^{-1}(\ii)(\sum_{\ell =1}^{s}\prod_{j=1}^{\ell}X_{k_j}^{-1}(\ii)+1).$$
		\end{proposition}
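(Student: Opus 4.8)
The plan is to work directly with the potential summand $\restr{W_i}{\mathcal{X}_\ii}$, so that Proposition \ref{iso} is not needed for this statement. Fix the given simply-braided sequence and let $\ii'=(i'_1,\dots,i'_N)$ with $i'_N=i$ be its endpoint. By Lemma \ref{opsequence} the seed $\Sigma_{\ii'}$ is optimized for the frozen vertex $v_N$ of color $i$, so Definition \ref{potdefi} gives
$$\restr{W_i}{\mathcal{X}_{\ii'}}=X_N^{-1}(\ii')=X_{\alpha_i}^{-1}(\ii'),$$
which is the asserted formula in the base case $s=0$. I would then transport this single monomial back to $\Sigma_\ii$ along the sequence, exactly as in \eqref{eq:movesop}, and identify the result with the claimed ``tube''.

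By Lemma \ref{braid1} every $3$-term move is realized, after the swap of its two rightmost affected vertices, by a single $\mathcal{X}$-cluster mutation, while each $2$-move merely relabels vertices and does not change the shape of $\restr{W_i}{\mathcal{X}_{\bullet}}$. The decisive feature of the simply-braided hypothesis is that the leftmost affected root of every $3$-move equals $\alpha_i$; by Lemma \ref{braid1} the associated mutation hence always occurs at the mutable vertex carrying the root $\alpha_i$ at that stage, and never at the frozen vertex of color $i$. Moreover, by Lemma \ref{prepare}(1) applied at each intermediate word, the frozen variable $X_{\alpha_i}^{-1}$ is a common factor of $\restr{W_i}{\mathcal{X}_{\bullet}}$ throughout, so it suffices to follow the cofactor.

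The core is an induction on $s$. Undoing the sequence one $3$-move at a time yields words $\ii'=\ii^{(0)},\ii^{(1)},\dots,\ii^{(s)}=\ii$, and I claim that after undoing the last $p$ of the $3$-moves,
$$\restr{W_i}{\mathcal{X}_{\ii^{(p)}}}=X_{\alpha_i}^{-1}(\ii^{(p)})\Bigl(\sum_{\ell=s-p+1}^{s}\ \prod_{j=s-p+1}^{\ell}X_{k_j}^{-1}(\ii^{(p)})\ +\ 1\Bigr).$$
The step from $\ii^{(p)}$ to $\ii^{(p+1)}$ is a single mutation $\check\mu_c^*$ at the current $\alpha_i$-vertex $v_c$, followed by the swap-relabelling of Lemma \ref{braid1} that renames $v_c$ as the middle-root vertex $v_{k_{s-p}}$ of the undone move. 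Using the $\mathcal{X}$-mutation rule \eqref{X-mutation} and the non-positivity of all exponents (Proposition \ref{polynomial}), I would show that this operation multiplies the previous cofactor by $X_{k_{s-p}}^{-1}$ and contributes exactly one further monomial (the constant term), i.e.\ realizes the recursion $C_{p+1}=1+X_{k_{s-p}}^{-1}C_p$, producing the displayed expression at $\ii^{(p+1)}$; for $p=s$ this is the proposition. Equivalently, one may recognize the nested sum as the $F$-polynomial attached to this mutation sequence on a linearly oriented type-$A_s$ subquiver, namely $1+y_1+y_1y_2+\cdots+y_1\cdots y_s$, and read it off from the separation formula \eqref{Fpol}; this is the origin of the name ``tube''.

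The hard part is the local quiver analysis behind the inductive step: one must verify that throughout the sequence the active vertices — the frozen vertex of color $i$, the migrating $\alpha_i$-vertices, and the middle-root vertices $v_{k_1},\dots,v_{k_s}$ — organize into a linearly oriented type-$A$ string whose arrows carry exactly the signs and multiplicities ($\pm1$) needed for each mutation to append a single factor $X_{k_j}^{-1}$ while leaving $X_{\alpha_i}$ detached from the mutated vertex. This is precisely where one uses that $\alpha_i$ is always the leftmost affected root, so that successive middle roots stack up without interfering; once the shape of this subquiver is pinned down, the mutation computation reduces to a direct application of \eqref{X-mutation}.
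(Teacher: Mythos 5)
Your proposal is correct and follows essentially the same route as the paper: an induction on the number of $3$-term moves in the simply-braided sequence, with base case the optimized seed where $\restr{W_i}{\mathcal{X}_{\ii'}}=X_N^{-1}(\ii')$, each $3$-move realized as a single $\mathcal{X}$-mutation via Lemma \ref{braid1}, and the recursion $C\mapsto 1+X_{k}^{-1}C$ extending the tube by one factor at each step (the paper phrases this as peeling off the first $3$-move and invoking the induction hypothesis, which unrolls to exactly your backward iteration). The ``hard part'' you defer --- pinning down the local shape of the quiver around the mutated vertex so that \eqref{X-mutation} contributes exactly one new factor --- is precisely the step the paper carries out by displaying the local picture of $\Gamma_{\ii'}$ around $v_t$, so your plan is the paper's proof with that one computation left to be written out.
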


		\begin{proof} We prove the claim by induction over $m$. If $m=0$, then $\Gamma_{\ii}$ is optimized for $i$, $W_i|_{\mathbf{{\mathcal{X}}}_{\ii}}=X_{\alpha_i}^{-1}$ and our claim is true. Assume now that $m>0$ and let $\beta_t=\alpha_k$. Since $\ii$ is by assumption simply-braided, we get (up to $2$-term moves) that $(i_t,i_{t+1},i_t+2)$ is the first $3-$term move in the simply-braided sequence. Let $\ii'$ be the reduced word obtained from $\ii$ by a $3$-term move at position $t+1$. Clearly $\ii'$ is still simply-braided with a simply-braided sequence such that $\beta_{k_j}$ is the middle root affected in the $j+1-$th $3$-term move for all $j\in\{2,\ldots,s\}$. Hence, by induction hypothesis,
		$$\restr{W_i}{_{\mathcal{X}_{\ii'}}}=\mu^{\vee}_{t} \circ \restr{W_i}{{\mathcal{X}_{\ii}}}=X_{\alpha_i}^{-1}(\ii')(\sum_{\ell =1}^{s}\prod_{j=2}^{\ell}X_{k_j}^{-1}(\ii')+1).$$
		
		By the definition of the graph $\Gamma_{\ii'}$, it looks locally around $v_{t}$ as follows:
		\begin{center}
		$\xymatrix{
		&  v_{({t+1})^{-}}(\ii') \ar[rr] & & v_{t+1}(\ii') \ar[dl] \\
		v_{t^{-}}(\ii') \ar[rr]& & v_t \ar[rr] \ar[lu]& & v_{t+2}(\ii')
		}$
		\end{center}
		Since $\beta_{k_j}\le_{\ii} \alpha_i$ for all $j<m$ and $X_{t+2}(\ii')=X_{\alpha_i}(\ii')$, $X_{t}(\ii')=X_{k_m}(\ii')$, we get the claim by the $\mathcal{X}$-cluster mutation rule \eqref{X-mutation} and Lemma \ref{braid1}.
		\end{proof}
		
		We are ready to prove that the string cone inequalities are non-redundant for simply-braided $\ii$.
		
		\begin{theorem}\label{simbraid} If $\ii\in R(w_0)$ is simply-braided, $\varsigma_{\ii,i}$ is multiplicity-free. In particular, the inequalities from \eqref{stringconeine} are non-redundant.
		\end{theorem}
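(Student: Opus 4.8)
The plan is to leverage Proposition \ref{tubes}, which gives an explicit closed form for $\restr{W_i}{\mathcal{X}_{\ii}}$ when $\ii$ is simply-braided for $i$, and then simply read off from this formula that the restriction is multiplicity-free in the sense of Definition \ref{mf}. Since the theorem asserts the statement for simply-braided $\ii$ (i.e. simply-braided for \emph{all} $i\in I$), I would first fix an arbitrary $i\in I$ and show $\restr{W_i}{\mathcal{X}_{\ii}}$ is multiplicity-free; the full statement then follows by ranging over $i$, invoking the second sentence of Definition \ref{mf} and Proposition \ref{iso} to translate the statement about $\restr{W_i}{\mathcal{X}_{\ii}}$ into the desired statement about $\varsigma_{\ii,i}$.

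The heart of the argument is the observation that in the formula
$$\restr{W_i}{\mathcal{X}_{\ii}}=X_{\alpha_i}^{-1}(\ii)\left(\sum_{\ell=1}^{s}\prod_{j=1}^{\ell}X_{k_j}^{-1}(\ii)+1\right),$$
the indices $k_1,\ldots,k_s$ appearing as middle roots in the successive $3$-term moves of the simply-braided sequence are \emph{pairwise distinct}, and moreover the index $\alpha_i$ (i.e. the vertex corresponding to $\alpha_i$ in the $<_{\ii}$-order) is distinct from all of them. Granting this, each monomial of the expanded sum is of the form $X_{\alpha_i}^{-1}\prod_{j=1}^{\ell}X_{k_j}^{-1}$, a squarefree Laurent monomial in which every variable occurs with exponent $0$ or $-1$. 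Hence every $\mathfrak{m}\in\mathcal{M}_{i,\ii}$ satisfies $|j_k|\le 1$ for all $k$, which is exactly multiplicity-freeness.

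First I would verify the distinctness of the indices $k_1,\ldots,k_s$. Each $k_j$ is the position of the middle root affected by the $j$-th $3$-term move, and in a simply-braided sequence the leftmost root affected is always $\alpha_i$; the combinatorics of the braid moves (together with the fact that in each step $\beta_{k_j}\le_{\ii}\alpha_i$, as used in the proof of Proposition \ref{tubes}) forces the middle roots to be distinct positive roots, and none of them can equal $\alpha_i$ since $\alpha_i$ plays the role of the fixed leftmost root throughout. I expect this bookkeeping — tracking exactly which positive roots can recur as middle roots across the sequence and confirming they are all distinct from one another and from $\alpha_i$ — to be the only real content of the argument; once it is in place the multiplicity-freeness is immediate from inspection of the monomials.

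Finally, to conclude the stated theorem, I would assemble the pieces: multiplicity-freeness of $\restr{W_i}{\mathcal{X}_{\ii}}$ for each $i$ gives multiplicity-freeness of $\varsigma_{\ii,i}$ by Proposition \ref{iso}, and the non-redundancy of the inequalities in \eqref{stringconeine} then follows directly from Theorem \ref{nomulti}. The main obstacle, as noted, is not any deep estimate but the careful verification that the variables in the closed-form potential of Proposition \ref{tubes} are genuinely distinct; everything else is a direct application of results already established in the excerpt.
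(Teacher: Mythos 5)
Your proposal follows the paper's own proof essentially verbatim: the paper likewise reads off multiplicity-freeness of $\restr{W_i}{\mathcal{X}_{\ii}}$ directly from the closed form in Proposition \ref{tubes} and then concludes via Proposition \ref{iso} and Proposition \ref{nomult}. The only difference is that you explicitly flag the need to verify that $\alpha_i,\beta_{k_1},\ldots,\beta_{k_s}$ are pairwise distinct so that the monomials are genuinely squarefree --- a point the paper leaves implicit --- and your sketch of why this holds (each $3$-term move in a simply-braided sequence has $\alpha_i$ as its leftmost root, so the middle roots are distinct sums $\alpha_i+\delta_j$ with distinct $\delta_j$) is correct.
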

		
		\begin{proof} We have for all $i\in I$ that $\restr{W_i}{\mathcal{X}_{\ii}}$ be multiplicity-free by Proposition \ref{tubes}. Hence the the inequalities are non-redundant by Propsition \ref{iso} and Theorem \ref{nomult}.
		\end{proof}

\subsubsection{Nice words}\label{nicesec}

We recall the following notions from \cite{Lit}.

\begin{definition}\label{nice} \begin{enumerate}
\item We call an enumeration $\{\alpha_1, \ldots, \alpha_n\}$ of $\Pi$ a \emph{good enumeration} if for all $i\in \{1,\ldots,n\}$ the fundamental weight $\omega_i$ is minuscule for $G'$ corresponding to the Dynkin diagram of $G$ with the nodes labeled by $\{1,\ldots,i-1\}$ removed. 
\item Let $\{\alpha_1, \ldots, \alpha_n\}$ be a good enumeration. For $j\in \{1,\ldots, n\}$ we denote by $W_j$ the subgroup of $W$ generated by $s_1,\ldots,s_j$. We call a reduced word $\ii=(i_1,\ldots,i_N)\in R(w_0)$ a \emph{nice word} if  $s_{i_1}s_{i_2}\cdots s_{i_N}=\tau_1 \tau_2 \cdots \tau_n$, where $\tau_j$ is the longest word in the set of minimal representative in $W_j$ of $\mfaktor{W_{j-1}}{W_j}$.
\end{enumerate}
\end{definition}

\begin{remark} Nice words exist for all $\mathfrak{g}$ not of type $E_8$ by \cite{Lit}. Moreover, each $\tau_j$ in Definition \ref{nice} is unique up to $2-$term braid moves by \cite[Lemma 3.2]{Lit}.
\end{remark}

By \cite[Lemma 4.17]{SST} every nice word $\ii\in R(w_0)$ is simply-braided for all $i\in I$. Hence we get as a direct consequence of Proposition \ref{tubes} and Proposition \ref{iso}.
\begin{theorem}\label{nicew} Let $\ii \in R(w_0)$ be a nice word. Then $\restr{W_i}{\mathcal{X}_{\ii}}$ is multiplicity-free and the inequalities of $\mathcal{C}_{\ii}$ from \ref{stringconeine} are non-redundant. Moreover, the inequalities \ref{stringconeine} are explicitly given by $t_k \ge t_{k'}$ for any $k,k'\in [N]$ such that $\beta_{k}\le_{\ii} \beta_{k'}$ and thus recover the inequalities from \cite{Lit}.
\end{theorem}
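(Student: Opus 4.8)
The plan is to reduce both the multiplicity-freeness and the non-redundancy to the simply-braided case already settled in Theorem~\ref{simbraid}, and then to read off the explicit shape of the inequalities from the closed formula of Proposition~\ref{tubes}. First I would invoke \cite[Lemma 4.17]{SST}, which tells us that a nice word $\ii$ is simply-braided for every $i\in I$. Fixing $i$ and a simply-braided sequence, Proposition~\ref{tubes} gives
\[
\restr{W_i}{\mathcal{X}_{\ii}}=X_{\alpha_i}^{-1}(\ii)\Bigl(1+\sum_{\ell=1}^{s}\prod_{j=1}^{\ell}X_{k_j}^{-1}(\ii)\Bigr),
\]
in which every cluster variable occurs with exponent $0$ or $-1$. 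Hence $\restr{W_i}{\mathcal{X}_{\ii}}$ is multiplicity-free in the sense of Definition~\ref{mf}, and by Proposition~\ref{iso} so is $\varsigma_{\ii,i}$; this proves the first assertion. Non-redundancy of the whole system \eqref{stringconeine} is then immediate from Proposition~\ref{nomult} together with Proposition~\ref{iso} (equivalently, it is exactly the content of Theorem~\ref{simbraid} applied to each color).

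For the explicit description I would expand the formula above into its $s+1$ monomials $\mathfrak{m}_0=X_{\alpha_i}^{-1}$ and $\mathfrak{m}_\ell=X_{\alpha_i}^{-1}\prod_{j=1}^{\ell}X_{k_j}^{-1}$ for $1\le \ell\le s$. Since $\varsigma_{\ii,i}$ is a Laurent polynomial with positive coefficients, its tropicalization is the minimum of the tropicalizations of these monomials, so the single constraint $[\varsigma_{\ii,i}]_{trop}(t)\ge 0$ is equivalent to the $s+1$ linear inequalities $[\mathfrak{m}_\ell\circ\widehat{\text{CA}}_{\ii}]_{trop}(t)\ge 0$. Using Proposition~\ref{iso} and the defining exponents $\sym{k}{\ell}$ of $\widehat{\text{CA}}_{\ii}$, each $[X_{k}\circ\widehat{\text{CA}}_{\ii}]_{trop}$ becomes an explicit linear form in the string coordinates $t$; for the frozen variable one has no $k^{+}$-term, for a mutable variable one picks up $-t_k$ together with the contributions of the colors adjacent to $i_k$ lying between $k$ and $k^{+}$. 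The crucial point is that, because consecutive monomials differ by a single factor $X_{k_\ell}^{-1}$ and the positions $k_1<_{\ii}\cdots<_{\ii}k_s$ form the chain of middle roots produced by the simply-braided sequence, the intermediate contributions cancel in pairs; the $\ell$-th inequality then collapses to a two-term difference $t_k-t_{k'}\ge 0$ whose ends are the extremal roots of that chain, with $\beta_k\le_{\ii}\beta_{k'}$. Letting $i$ range over $I$ produces the full family, giving inequalities of the claimed form $t_k\ge t_{k'}$ with the smaller root's coordinate dominating.

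Finally I would match this family against \cite{Lit}: for a nice word the chains of middle roots attached to the various colors are exactly the combinatorial data underlying Littelmann's inequalities, so the two non-redundant descriptions of $\mathcal{C}_{\ii}$ coincide. I expect the main obstacle to be the middle paragraph, namely carrying out the tropicalization of $[X_{k}\circ\widehat{\text{CA}}_{\ii}]_{trop}$ via the exponents $\sym{\cdot}{\cdot}$ and verifying the telescoping cancellation, i.e.\ that each monomial genuinely reduces to a two-term difference with the correct pair of indices in the convex order $<_{\ii}$. Once this two-term form is established, identifying the resulting pairs with \cite{Lit} is essentially bookkeeping in the poset structure of a nice word.
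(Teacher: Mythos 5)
Your proposal follows exactly the paper's route: the paper derives Theorem~\ref{nicew} as a direct consequence of \cite[Lemma 4.17]{SST} (nice words are simply-braided for all $i$), Proposition~\ref{tubes} (the closed multiplicity-free formula for $\restr{W_i}{\mathcal{X}_{\ii}}$), and Proposition~\ref{iso}, with non-redundancy then coming from Theorem~\ref{simbraid}/Proposition~\ref{nomult}. Your additional middle paragraph spelling out the telescoping of $[\mathfrak{m}_\ell\circ\widehat{\text{CA}}_{\ii}]_{trop}$ into two-term differences $t_k\ge t_{k'}$ is detail the paper leaves implicit, but it is the intended computation and not a deviation in approach.
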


\subsection{Minuscule weights and $\mathbf{i}$-trails}\label{trails}

We recall the notion of an $\ii$-trail from \cite{BZ}.

\begin{definition} For a finite dimensional representation $V$ of $\mathfrak{g}$, two weights $\gamma,\delta$ of $V$ and $\ii \in R(w_0)$, we say that a sequence of weights $\pi=(\gamma=\gamma_0,\gamma_1, \ldots, \gamma_{N}=\delta)$ is an $\ii$-trail from $\gamma$ to $\delta$ if
\begin{itemize}
\item $\gamma_{k-1}-\gamma_k=c_k \alpha_{i_k}$ for some $c_k\in \mathbb{Z}_{\ge 0}$,
\item $e_{i_1}^{c_1}e_{i_2}^{c_2}\cdots e_{i_{\ell}}^{c_{\ell}}$ is a non-zero map vom $V_{\delta}$ to $V_{\gamma}$. Here $V_{\delta}$ ($V_{\gamma}$) denotes the weight space of $V$ corresponding to the weight $\delta$ ($\gamma$, respectively).
\end{itemize}
We further define for any $\ii$-trail $\pi=(\gamma_0,\gamma_1, \ldots, \gamma_{\ell})$ in a $\mathfrak{g}$-module and every $k\in [\ell]$ the value
$$d_k=d_k(\pi)=\frac{\gamma_{k-1}+\gamma_k}{2}(\alpha_{i_k}^{\vee}).$$
\end{definition}

By \cite[Theorem 3.10]{BZ} the string cone $\mathcal{C}_{\ii}$ is the cone in $\mathbb{R}^N$ given by all $(t_1,\ldots,t_N)$ such that $\sum_{k}d_k(\pi)t_k\ge 0$ for any $i\in I$ and any $\ii$-trail $\pi$ from $\omega^{\vee}_i$ to $w_0s_i\omega_i^{\vee}$ in the $^L\mathfrak{g}$-module $V(\omega^{\vee}_i)$.

We call a fundamental weight $\omega_i$ \emph{minuscule} if $\beta(\omega_i^{\vee}) \in \{-1,0,1\}$ for all $\beta\in \Delta$.  
The aim of this section is to prove that the defining inequalities of $\mathcal{C}_{\mathbf{i}}(i)$ are non-redundant provided $\omega_i$ is minuscule. The essential argument relies on extremal $\mathbf{i}$-trails introduced in \cite{BZ}. We first give the relation to our function $\varsigma_{\mathbf{i},i}$ from \eqref{stringconeine}. 

Recall for $i\in I$ that we denote by $i^*\in I$ the unique element such that $w_0\omega_i=\omega_{i^*}$. Note that $\omega_i$ is minuscule if and only if $\omega_{i^*}$ is minuscule.

\begin{proposition}\label{trailandpot} Let $i \in I$ be such that $\omega_i$ is minuscule and $\ii \in R(w_0)$. We have for $t\in \mathbb{R}^N$ 
$$[\varsigma_{\mathbf{i},i}]_{trop}(t)=\displaystyle\min_{\pi}\sum_{k}d_k(\pi)t_k\ge 0,$$ 
where the minimum is taken over all $\ii$-trails $\pi$ from $\omega^{\vee}_{i^*}$ to $w_0s_{i^*}\omega_{i^*}^{\vee}$ in $V(\omega^{\vee}_i)$.
\end{proposition}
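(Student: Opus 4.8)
The plan is to deduce the statement from the identification of $\varsigma_{\mathbf{i},i}$ with a potential term in Proposition \ref{iso} and from Berenstein--Zelevinsky's expansion of the relevant generalized minor in terms of $\ii$-trails. By Proposition \ref{iso} we have $\varsigma_{\mathbf{i},i}=\restr{W_i}{\mathcal{X}_{\ii}}\circ \widehat{\text{CA}}_{\ii}$, and since $\widehat{\text{CA}}_{\ii}$ is a monomial map the tropicalization factors as $[\varsigma_{\mathbf{i},i}]_{trop}=[\restr{W_i}{\mathcal{X}_{\ii}}]_{trop}\circ[\widehat{\text{CA}}_{\ii}]_{trop}$. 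Because a subtraction-free sum of monomials tropicalizes to the minimum of the tropicalized monomials, it suffices to exhibit a subtraction-free expression $\restr{W_i}{\mathcal{X}_{\ii}}\circ \widehat{\text{CA}}_{\ii}=\sum_{\pi}\prod_{k}x_k^{d_k(\pi)}$, the sum ranging over the $\ii$-trails $\pi$ of the statement; tropicalizing this identity then yields $[\varsigma_{\mathbf{i},i}]_{trop}(t)=\min_{\pi}\sum_k d_k(\pi)t_k$, and the inequality $\ge 0$ is exactly \cite[Theorem 3.10]{BZ} together with Proposition \ref{stringpos}.

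The core step is thus to recognize $\restr{W_i}{\mathcal{X}_{\ii}}$, transported by $\widehat{\text{CA}}_{\ii}$, as this trail-generating function. The frozen potential term $W_i$ is, up to the torus isomorphism, a generalized minor on the partial compactification $\overline{L^{e,w_0}}$, and for minuscule $\omega_i$ this should be precisely the minor whose $\ii$-trail expansion underlies \cite[Theorem 3.10]{BZ}. Minusculeness is what makes the combinatorics rigid: in $V(\omega^{\vee}_{i^*})$ every weight space is one-dimensional and every $\alpha$-string has length at most two, so each trail has $c_k\in\{0,1\}$ and the exponent vectors satisfy $|d_k(\pi)|\le 1$. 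I would then match Berenstein--Zelevinsky's factorization coordinates to the coordinates of $\mathcal{X}_{\Sigma_{\ii}}$ through the explicit monomial map $\widehat{\text{CA}}_{\ii}$, so that the exponent vector of the monomial attached to $\pi$ becomes $d(\pi)$. As a consistency check on the normalization, note that when $i_N=i$ the quiver $\Gamma_{\ii}$ is optimized for the frozen vertex $v_N$ by Lemma \ref{opsequence}, so $\restr{W_i}{\mathcal{X}_{\ii}}=X_N^{-1}(\ii)$ is a single monomial; correspondingly there is a unique admissible $\ii$-trail to the target weight, and a direct computation of $d_N=\gamma_N(\alpha^{\vee}_{i_N})$ gives $d(\pi)=e_N$, reproducing $[\varsigma_{\mathbf{i},i}]_{trop}=t_N$ as demanded by the first defining condition of $\varsigma_{\mathbf{i},i}$.

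The main obstacle is precisely this minor identification and coordinate matching, including the index shift $i\leftrightarrow i^*$ that reflects the duality between the string parametrization (crystal operators $\tilde{e}_i$) and Berenstein--Zelevinsky's raising operators on $V(\omega^{\vee}_{i^*})$. If a direct coordinate comparison proves unwieldy, I would instead pin down the equality through the uniqueness in Definition \ref{varsigma}: set $g_{\ii,i}(t)=\min_{\pi}\sum_k d_k(\pi)t_k$ and verify the two characterizing conditions. The base condition $g_{\ii,i}=t_N$ for $i_N=i$ is the computation above, while the covariance $g_{\ii,i}\circ\Psi^{\mathbf{j}}_{\ii}=g_{\mathbf{j},i}$ under $2$- and $3$-term moves would follow from Berenstein--Zelevinsky's description of how $\ii$-trails in the fixed module transform under braid moves. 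I expect the $3$-term move to be the delicate case, since the $\max$ and $\min$ appearing in $\Psi^{\mathbf{j}}_{\ii}$ must be recovered from the recombination of trails passing through the three affected positions; verifying this move-by-move is where the real work lies.
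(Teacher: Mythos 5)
Your fallback route --- pinning down $\min_{\pi}\sum_k d_k(\pi)t_k$ via the uniqueness in Definition \ref{varsigma} --- is exactly the paper's proof. The $3$-term-move covariance that you flag as ``where the real work lies'' is not reproved in the paper either: the identity $\mathfrak{t}_{\ii}\circ\Psi^{\ii'}_{\ii}=\mathfrak{t}_{\ii'}$ is quoted directly from the proof of \cite[Theorem 3.10]{BZ}, so that step is a citation, not new work. Your primary route (exhibiting $\restr{W_i}{\mathcal{X}_{\ii}}\circ\widehat{\text{CA}}_{\ii}=\sum_{\pi}\prod_k x_k^{d_k(\pi)}$ as a subtraction-free identity) is not what the paper does, and you do not carry out its ``core step,'' so it cannot stand on its own.

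The genuine gap is in your base case. You claim that for an arbitrary reduced word with $i_N=i$ there is a unique admissible $\ii$-trail with $d(\pi)=e_N$, and you justify this by the fact that $\restr{W_i}{\mathcal{X}_{\ii}}=X_N^{-1}(\ii)$ is a single monomial (``correspondingly there is a unique admissible trail''). That inference presupposes the monomial--trail correspondence, which is precisely the unestablished content of your primary route; inside the fallback argument it is circular. What is actually needed is an independent, trail-side verification that $\mathfrak{t}_{\ii}(t)=t_N$ for at least one reduced word ending in $i$, and this is not a formality for an arbitrary such word. The paper handles it by choosing a specific word $\ii_0=(\ii'_0,\ii''_0)$ adapted to the parabolic factorization $w_0=w'_0\tau_i$, where $\ii''_0$ is a reduced word for $\tau_i$ (unique up to $2$-term moves precisely because $\omega_i$ is minuscule --- this is where the minuscule hypothesis enters), and then invoking the computation in \cite[Proof of Theorem 3.13]{BZ} to conclude $\mathfrak{t}_{\ii_0}(t)=t_N$ and $i'_N=i$. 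Without this (or an equivalent direct analysis of the trails for some word ending in $i$), your argument does not close.
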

\begin{proof} For $i\in I$ let us denote by $\mathfrak{t}_{i}: \mathbb{Z}^N \rightarrow \mathbb{Z}$ the piecewise-linear function such that $\mathfrak{t}_{\ii}(t)=\min_{\pi}  \sum_{k}d_k{\pi}t_k$ where the minimum is taken over all $\ii$-trails $\pi$ from  $\omega^{\vee}_{i^*}$ to $w_0s_{i^*}\omega_{i^*}^{\vee}$ in $V(\omega^{\vee}_i)$. To prove the claim we need to show that $[\varsigma_{\mathbf{i},i}]_{trop}=\mathfrak{t}_{i}$. By the proof of \cite[Theorem 3.10]{BZ}, we have or any $\ii' \in R(w_0)$. 
\begin{equation}\label{trailsmut}
 \mathfrak{t}_{\ii} \circ \Psi^{\ii'}_{\ii}=\mathfrak{t}_{\ii'}.
\end{equation}
By Definition \ref{varsigma} it suffices to prove the claim for one fixed $\ii \in R(w_0)$. Let $w'_0$ be the longest element in the maximal parabolic subgroup of $W$ generated by $\left<s_j \mid j\in I\setminus \{i\} \right>$. Let $\tau_i$ be as in Definition \ref{nice}. Let $\ii_0$ be a reduced word such that $\ii_0=(\ii'_0,\ii''_0)=(i'_1,\ldots,i'_N)$ where $\ii'_0$ is a reduced word for $w'_0$ and $\ii''_0$ is a reduced word for $\tau_i$. By \cite[Lemma 3.2]{Lit}, the word $\ii'_0$ is unique up to 2-term braid moves hence we are in the situation of \cite[Proof of Theorem 3.13]{BZ}. From this we conclude that $\mathfrak{t}_{\ii_0}(t)=t_N$ and that $i'_N=(i^*)^*=i$. We conclude, by Definition \ref{varsigma}, that $[\varsigma_{\mathbf{i}_0,i}]_{trop}(t)=t_N$ which proves the claim.
\end{proof}

We are ready to prove the main result of this section.

\begin{theorem}\label{minusculenomulti} Let $i \in I$ be such that $\omega_i$ is minuscule and $\ii \in R(w_0)$. Then $\varsigma_{\mathbf{i},i}$ is multiplicity-free and the inequalities from \eqref{stringconeine} are non-redundant.
\end{theorem}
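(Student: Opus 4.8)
The plan is to prove Theorem~\ref{minusculenomulti} by reducing the multiplicity-freeness of $\varsigma_{\mathbf{i},i}$ to an explicit combinatorial description of the tropicalized function through $\mathbf{i}$-trails, using Proposition~\ref{trailandpot} as the bridge. By that proposition we know that $[\varsigma_{\mathbf{i},i}]_{trop}(t)=\min_{\pi}\sum_k d_k(\pi)t_k$, where the minimum ranges over $\mathbf{i}$-trails $\pi$ from $\omega^{\vee}_{i^*}$ to $w_0 s_{i^*}\omega_{i^*}^{\vee}$ in the minuscule representation $V(\omega^{\vee}_i)$. Since $\varsigma_{\mathbf{i},i}$ is a positive Laurent polynomial, each of its monomials $\mathfrak{m}=\prod_k X_k^{j_k}$ tropicalizes to a linear form $t\mapsto \sum_k j_k t_k$, and the collection of these linear forms is exactly the collection of forms $t\mapsto \sum_k d_k(\pi)t_k$ appearing in the minimum (after passing through the isomorphism $\widehat{\mathrm{CA}}_{\mathbf{i}}$ of Proposition~\ref{iso}). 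Thus multiplicity-freeness is equivalent to the statement that every coefficient $d_k(\pi)$ arising from a relevant $\mathbf{i}$-trail lies in $\{-1,0,1\}$.

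The heart of the argument is therefore to show that for a minuscule weight the trail coefficients $d_k(\pi)=\frac{\gamma_{k-1}+\gamma_k}{2}(\alpha^{\vee}_{i_k})$ are bounded in absolute value by $1$. First I would exploit the defining feature of minuscule weights: in $V(\omega^{\vee}_i)$ all weights lie in a single Weyl group orbit and satisfy $\gamma(\alpha^{\vee}_j)\in\{-1,0,1\}$ for every $j$. For a trail, consecutive weights satisfy $\gamma_{k-1}-\gamma_k=c_k\alpha_{i_k}$ with $c_k\in\mathbb{Z}_{\ge 0}$, and the non-vanishing of the map $e^{c_k}_{i_k}$ on a minuscule representation forces $c_k\in\{0,1\}$, because a simple root string through any weight of a minuscule module has length at most one. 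When $c_k=0$ we have $\gamma_{k-1}=\gamma_k$, so $d_k(\pi)=\gamma_k(\alpha^{\vee}_{i_k})\in\{-1,0,1\}$ directly. When $c_k=1$, I would compute $\gamma_{k-1}+\gamma_k=2\gamma_k+\alpha_{i_k}$ and hence $d_k(\pi)=\gamma_k(\alpha^{\vee}_{i_k})+\tfrac{1}{2}\alpha_{i_k}(\alpha^{\vee}_{i_k})=\gamma_k(\alpha^{\vee}_{i_k})+1$; since $c_k=1$ signals that $e_{i_k}$ acts nontrivially, $\gamma_k(\alpha^{\vee}_{i_k})$ must be $-1$ (the bottom of a length-one string), giving $d_k(\pi)=0$. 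In all cases $d_k(\pi)\in\{-1,0,1\}$, which is precisely the bound needed.

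Having established that every monomial of $\varsigma_{\mathbf{i},i}$ has exponents in $\{-1,0,1\}$, I conclude that $\varsigma_{\mathbf{i},i}$ is multiplicity-free, and the non-redundancy of the inequalities \eqref{stringconeine} then follows immediately from Theorem~\ref{nomulti}. The main obstacle I anticipate is the passage between the exponent vectors of the Laurent monomials of $\varsigma_{\mathbf{i},i}$ and the trail coefficient vectors $(d_k(\pi))_k$: Proposition~\ref{trailandpot} identifies the two tropicalized functions as piecewise-linear maps, but one must verify that the \emph{individual} linear pieces of the minimum correspond bijectively to the \emph{individual} monomials of the positive Laurent polynomial (rather than merely agreeing as functions), so that a bound on trail coefficients genuinely controls every monomial exponent. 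I would handle this by invoking the positivity of $\varsigma_{\mathbf{i},i}$ together with Proposition~\ref{polynomial}, which guarantees that all exponents are non-positive in the optimized coordinates; this rigidity ensures that distinct monomials give distinct linear pieces and that each appears as an extremal piece of the minimum, so the correspondence between monomials and trails is faithful and the coefficient bound transfers exactly.
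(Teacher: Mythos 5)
Your overall strategy coincides with the paper's: reduce to the trail description of $[\varsigma_{\mathbf{i},i}]_{trop}$ via Proposition~\ref{trailandpot}, bound the coefficients of the linear pieces using minusculeness, and conclude with Theorem~\ref{nomulti}. The middle step is where you genuinely diverge, and there your argument is correct and arguably more self-contained: you show directly that in a minuscule module every $c_k\in\{0,1\}$ and hence $d_k(\pi)\in\{-1,0,1\}$ (the case analysis $c_k=0$ versus $c_k=1$ is right), whereas the paper instead invokes \cite[Proposition 9.2]{BZ} to rewrite $\mathfrak{t}_{\ii}$ as a minimum over subwords of $\ii$ that are reduced words for $u(i)$, with coefficients of the form $\beta(\omega_i^{\vee})$ for roots $\beta$, which lie in $\{-1,0,1\}$ by the very definition of minuscule. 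Either route delivers the needed bound on the linear pieces of the minimum.

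The genuine gap is in your treatment of the transfer step, which you correctly single out as the main obstacle but then resolve with a false claim. You assert that positivity of $\varsigma_{\mathbf{i},i}$ together with Proposition~\ref{polynomial} forces every monomial to ``appear as an extremal piece of the minimum,'' so that monomials correspond faithfully to trails. This is exactly what fails in general: Section~\ref{exnonm} exhibits a monomial, $2 X^{-1}_9 X^{-1}_5 X^{-1}_6 X^{-1}_7 X^{-1}_8$, of a positive Laurent polynomial with non-positive exponents whose tropicalization is a convex combination of two others and therefore contributes nothing to the minimum; non-positivity of exponents provides no such rigidity. Worse, if every monomial were automatically extremal then non-redundancy would always hold and Conjecture~\ref{conjmu2} would be vacuous; and since by Theorem~\ref{nomulti} multiplicity-freeness \emph{implies} extremality of all monomials, using extremality to prove multiplicity-freeness is circular. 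The gap can be closed, but by a different (one-line) argument: writing $S$ for the support of $\varsigma_{\mathbf{i},i}$, the functional identity $\min_{u\in S}\langle t,u\rangle=\min_{\pi}\langle t,d(\pi)\rangle$ for all $t$ forces $\mathrm{conv}(S)=\mathrm{conv}(\{d(\pi)\}_{\pi})$, because a polytope is determined by its support function; since every $d(\pi)$ lies in the cube $[-1,1]^N$, so does $\mathrm{conv}(S)$, and as $S\subset\mathbb{Z}^N$ every exponent vector lies in $\{-1,0,1\}^N$. (The paper's own proof is equally terse at this point, so the issue is not that you missed something the paper spells out, but that the justification you supplied is one that demonstrably does not work.)
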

\begin{proof} Let $W_{\hat{i}}$ be the maximal parabolic subgroup of $W$ generated by all $s_j$ with $j\ne i$ and let $u{(i)}$ be the minimal representative of the coset $W_{\hat{i}}s_iw_0$ in $W$. Let $\mathfrak{t}_{\ii}(t)$ be as in the proof of Proposition \ref{trailandpot}. With the convention that $k(0) = 0$ and $k(p+1) = N+1$ we have by \cite[Proposition 9.2, Theorem 3.10]{BZ} that
$$\mathfrak{t}_{\ii}(t)=\min_{(i_{k(1)},\ldots ,i_{k(p)})}\sum_{j=0}^{p}\sum_{k(j)<k<k(j+1)}s_{i_{k(1)}},\cdots s_{i_{k(j+1)}}\alpha_{i_k}(\omega_i^{\vee})\cdot t_k,$$ where the minimum ranges over all subwords $(i_{k(1)},\ldots i_{k(p)})$ of $\mathbf{i}$ which are a reduced word for $u(i)$. The claim now follows from the assumption that $\omega_i$ is minuscule and Proposition \ref{trailandpot}.
\end{proof}

We get as a direct corollary.

\begin{corollary}\label{typeA} Let $\mathfrak{g}=\text{sl}_{n+1}(\mathbb{C})$. Then $\varsigma_{\mathbf{i},i}$ is multiplicity-free and the inequalities from \eqref{stringconeine} are non-redundant for all $i\in I$.
\end{corollary}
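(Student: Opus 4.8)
The final statement is Corollary \ref{typeA}, which asserts that for $\mathfrak{g}=\mathrm{sl}_{n+1}(\mathbb{C})$, the function $\varsigma_{\mathbf{i},i}$ is multiplicity-free and the inequalities are non-redundant for all $i\in I$. The plan is to derive this directly from Theorem \ref{minusculenomulti} together with a standard fact about type $A$ representation theory, namely that every fundamental weight $\omega_i$ is minuscule in type $A_n$.

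First I would recall why every fundamental weight of $\mathrm{sl}_{n+1}$ is minuscule. The fundamental representation $V(\omega_i)$ is the $i$-th exterior power $\Lambda^i(\mathbb{C}^{n+1})$ of the standard representation, and its weights form a single Weyl-group orbit, all of which are extremal. Concretely, the weights are the $\binom{n+1}{i}$ distinct sums $\varepsilon_{j_1}+\cdots+\varepsilon_{j_i}$ for $1\le j_1<\cdots<j_i\le n+1$, and for any root $\beta=\varepsilon_a-\varepsilon_b$ one checks that $\beta(\omega_i^{\vee})\in\{-1,0,1\}$. Thus $\omega_i$ satisfies the minusculeness condition $\beta(\omega_i^{\vee})\in\{-1,0,1\}$ for all $\beta\in\Delta$, for every $i\in I$.

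Second, having established that $\omega_i$ is minuscule for all $i\in I$, I would simply invoke Theorem \ref{minusculenomulti} for each $i$: since the hypothesis (minusculeness of $\omega_i$) holds, the conclusion that $\varsigma_{\mathbf{i},i}$ is multiplicity-free and that the inequalities from \eqref{stringconeine} are non-redundant follows for each $i\in I$ and every $\ii\in R(w_0)$. This gives the corollary verbatim.

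I do not anticipate a genuine obstacle here, as the corollary is a direct specialization: the only content is the observation that type $A_n$ is the case where \emph{all} fundamental weights are minuscule, which is why the minuscule hypothesis of Theorem \ref{minusculenomulti} can be dropped entirely. The mild point worth stating cleanly is the minusculeness computation, but this is classical and can be cited or dispatched in one line. If one wanted to be fully self-contained, the main (still routine) step would be verifying the pairing $\beta(\omega_i^{\vee})\in\{-1,0,1\}$ from the explicit weights of $\Lambda^i(\mathbb{C}^{n+1})$.
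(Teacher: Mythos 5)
Your proposal is correct and matches the paper's reasoning exactly: the corollary is stated as a direct consequence of Theorem \ref{minusculenomulti}, using precisely the classical fact that every fundamental weight of $\mathrm{sl}_{n+1}(\mathbb{C})$ is minuscule. Nothing further is needed.
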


Corollary \ref{typeA} was already proven in \cite[Proposition 4.5.]{CKLP}. Using the notation of \cite{CKLP} we note that by \cite{GKS21} 
$$\restr{W}{\mathcal{X}_{\ii}}=\displaystyle\sum_{P}\prod_{\ \ \mathcal{C}_j\text{ is in a chamber enclosed by }P}X_j(\ii)^{-u_j},$$
where $P$ varies over all rigorous paths.

We end this section by remarking that an algorithm for the computation of all $\ii$-trails from $\omega^{\vee}_i$ to $w_0s_i\omega_i^{\vee}$ for minuscule $\omega_i$ was given recently in \cite{KKN} by computing the monomials in the Berenstein-Kazhdan decoration functions as defined in op. cit. From this one may deduce an alternative proof of Theorem \ref{minusculenomulti} by combining \cite[Theorem 2.5 and the argument below, Lemma 2]{KKN} and \cite[Theorem 7.5]{GKS20}.

\section{Beyond the multiplicity-free case}\label{exnonm}

In this subsection we study the following example. Let $\mathfrak{g}=\text{so}_8(\mathbb{C})$. We fix a labelling of the Dynkin diagram as follows 
\begin{equation*}
\xymatrix{
1  \ar@{-}[r] &  2  \ar@{-}[r]  \ar@{-}[d]& 4 \\
& 3
}
\end{equation*}

Fix $\mathbf{i}=(2,1,4,2,3,2,4,2,1,2,3,4)\in R(w_0).$ The quiver $\Gamma_{\ii}$ looks as follows:

\begin{equation*}
\xymatrix{
& & & v_4 \ar[ddlll] \ar[rrr] & & & v_8 \ar[ddlll] \ar[rrr] & & & v_{12} \\
& & v_3 \ar[dll] \ar[rrr] & & & v_7\ar[dll]  \ar[rrr] & & & v_{11} \\
v_1\ar[rrr] & & & v_5 \ar[dll] \ar[ul] \ar[uu] \ar[rrr]  & & & v_9 \ar[ul] \ar[dll]\ar[uu] \\ & v_2 \ar[ul]\ar[rrr] & & & v_6 \ar[rrr] \ar[ul]& & & v_{10}
}
\end{equation*}

The vertices $v_{12}$, $v_{11}$, $v_9$ and $v_{10}$ are frozen. Let $\mathcal{X}$ be the cluster variety obtained from the initial datum $\Sigma_{\ii}$. Note that $\Sigma_{\ii}$ is optimized for the frozen vertices $v_{10}$, $v_{11}$ and $v_{12}$. Thus, by Definition \ref{potdefi}, we have
$$\restr{W_1}{\mathcal{X}_{\ii}}=X^{-1}_{10}(\ii), \quad \restr{W_3}{\mathcal{X}_{\ii}}=X^{-1}_{11}(\ii), \quad \restr{W_4}{\mathcal{X}_{\ii}}=X^{-1}_{12}(\ii).$$

It remains to compute $\restr{W_2}{\mathcal{X}_{\ii}}$. Note that $\omega_2$ is not minuscule and $\ii$ is not simply-braided for $2$. One checks that the following sequence of mutations leads to a seed $\Sigma'$ which is optimized for $v_{9}$:

$$(6,3,5,4,3,1,2,7,6,8).$$

We use this sequence to compute:

\begin{center}
\scalebox{1.0}{\parbox{\linewidth}{%
 $\restr{W_2}{\mathcal{X}_{\ii}} =  X^{-1}_9 X^{-1}_1  X^{-1}_2  X^{-1}_3  X^{-1}_4  X^{-2}_5  X^{-1}_6  X^{-1}_7  X^{-1}_8 + 
 X^{-1}_9 X^{-1}_2  X^{-1}_3  X^{-1}_4  X^{-2}_5  X^{-1}_6  X^{-1}_7  X^{-1}_8 +  
X^{-1}_9 X^{-1}_2  X^{-1}_3  X^{-2}_5  X^{-1}_6  X^{-1}_7  X^{-1}_8 + X^{-1}_9 X^{-1}_2  X^{-1}_4  X^{-2}_5  X^{-1}_6  X^{-1}_7  X^{-1}_8 + X^{-1}_9
X^{-1}_3  X^{-1}_4  X^{-2}_5  X^{-1}_6  X^{-1}_7  X^{-1}_8 +  X^{-1}_2  X^{-2}_5  X^{-1}_6  X^{-1}_7  X^{-1}_8 +  X^{-1}_9
  X^{-1}_3  X^{-2}_5  X^{-1}_6  X^{-1}_7  X^{-1}_8 + X^{-1}_9 X^{-1}_4  X^{-2}_5  X^{-1}_6  X^{-1}_7  X^{-1}_8 +  X^{-1}_9
 X^{-1}_2  X^{-1}_5  X^{-1}_6  X^{-1}_7  X^{-1}_8 + X^{-1}_9 X^{-1}_3  X^{-1}_5  X^{-1}_6  X^{-1}_7  X^{-1}_8 + X^{-1}_9 X^{-1}_4  X^{-1}_5  X^{-1}_6  X^{-1}_7  X^{-1}_8 +  X^{-1}_9 X^{-2}_5  X^{-1}_6  X^{-1}_7  X^{-1}_8 + X^{-1}_9 X^{-1}_4  X^{-1}_5  X^{-1}_6  X^{-1}_7 +  X^{-1}_9 X^{-1}_3  X^{-1}_5  X^{-1}_6  X^{-1}_8 +  X^{-1}_9 X^{-1}_2  X^{-1}_5  X^{-1}_7  X^{-1}_8 + 2 X^{-1}_9   X^{-1}_5  X^{-1}_6  X^{-1}_7  X^{-1}_8 + X^{-1}_9 X^{-1}_5  X^{-1}_6  X^{-1}_7 +  X^{-1}_9 X^{-1}_5  X^{-1}_6  X^{-1}_8 +  X^{-1}_9 X^{-1}_5  X^{-1}_7  X^{-1}_8 +  X^{-1}_9 X^{-1}_6  X^{-1}_7  X^{-1}_8 + X^{-1}_9  X^{-1}_6  X^{-1}_7 +  X^{-1}_9 X^{-1}_6  X^{-1}_8 +  X^{-1}_9 X^{-1}_7  X^{-1}_8 +  X^{-1}_9 X^{-1}_6 +  X^{-1}_7 +  X^{-1}_9 X^{-1}_8 + X^{-1}_9. $}}
\end{center}

One checks that the cone given by all $t\in \mathbb{R}^{12}$ such that $[\restr{W_2}{\mathcal{X}_{\ii}}]_{trop}(t)\ge 0$ has $26$ facets but $\restr{W_2}{\mathcal{X}_{\ii}}$ has $27$ monomials. Hence there must be a redundancy. This is expected by Conjecture \ref{conjmu2} since $\restr{W}{\mathcal{X}_{\ii}}$ is not multiplicity-free for $i=2$ by the above calculation.

Ones notes further that the redundancy is given by the following equality:

\begin{center}
$ \displaystyle[2 X^{-1}_9   X^{-1}_5  X^{-1}_6  X^{-1}_7  X^{-1}_8]_{trop} = \frac{1}{2} ([X^{-1}_9 X^{-2}_5  X^{-1}_6  X^{-1}_7  X^{-1}_8]_{trop}+ [X^{-1}_9 X^{-1}_6  X^{-1}_7  X^{-1}_8 ]_{trop}).$
\end{center}

Hence the redundant inequality corresponds precisely to the monomial with coefficient $2$. This inspires the following stronger version of Conjecture \ref{conjmu2}.

\begin{conjecture}\label{mult2} Let $i\in I$ and $\ii \in R(w_0)$. The inequality $[\mathfrak{m}]_{trop} (x) \ge 0$ for an $\mathfrak{m}\in \mathcal{M}_{i,\mathbf i}$ is redundant if and only if $\restr{W}{\mathcal{X}_{\ii}}$ is not multiplicity-free for $i$ and the coefficient $a \in \mathbb{Q}$ of $\mathfrak{m}$ satisfies $a>1$.
\end{conjecture}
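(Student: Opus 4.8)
The plan is to reduce Conjecture \ref{mult2} to a single combinatorial dichotomy for the Newton polytope of $\restr{W_i}{\mathcal{X}_{\ii}}$ and then to establish that dichotomy from the $F$-polynomial description \eqref{Fpol}. First I would rephrase redundancy geometrically. By Proposition \ref{iso} tropicalization forgets coefficients, so the inequality attached to a monomial $\mathfrak{m}=a_{\mathfrak{m}}\prod_k X_k^{j_k}\in\mathcal{M}_{i,\ii}$ is $[\mathfrak{m}]_{trop}(x)=\sum_k j_k x_k\ge 0$ and depends only on the exponent vector $v(\mathfrak{m})=(j_k)_k$. By Proposition \ref{polynomial} every $v(\mathfrak{m})$ lies in the non-positive orthant, and from the shape of \eqref{Fpol}---a fixed $c$-vector monomial times an $F$-polynomial in the mutable directions---the frozen variable occurs with one and the same exponent $-1$ in every monomial, as in the worked example. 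Hence all $v(\mathfrak{m})$ lie on a common affine hyperplane, on which any conic combination equal to a given $v(\mathfrak{m}_0)$ is automatically convex (compare the frozen coordinate). Consequently $[\mathfrak{m}_0]_{trop}(x)\ge 0$ is redundant if and only if $v(\mathfrak{m}_0)$ is not a vertex of $\mathcal{N}_i=\mathrm{conv}\{v(\mathfrak{m})\mid \mathfrak{m}\in\mathcal{M}_{i,\ii}\}$, so Conjecture \ref{mult2} is equivalent to the clean dichotomy: a lattice point occurring in $\restr{W_i}{\mathcal{X}_{\ii}}$ is a vertex of $\mathcal{N}_i$ if and only if its coefficient equals $1$. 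The clause ``not multiplicity-free'' then becomes automatic, because a coefficient $>1$ yields a non-vertex and hence a redundancy, which by Theorem \ref{nomulti} cannot occur in the multiplicity-free case; I would record this as a compatibility lemma with Conjecture \ref{conjmu2}.

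For the easy half of the dichotomy I would exploit that, by \eqref{Fpol}, $\restr{W_i}{\mathcal{X}_{\ii}}$ is a monomial times a product of honest $F$-polynomials evaluated in the $X_k^{-1}$, so the coefficients $a_{\mathfrak{m}}$ are governed by $F$-polynomial coefficients, i.e.\ by broken-line counts in the cluster scattering diagram of \cite{GHKK} (equivalently Euler characteristics of quiver Grassmannians, $\mathfrak{g}$ being simply laced). A vertex $v$ of $\mathcal{N}_i$ is exposed by a generic linear functional, so the associated leading part is the single monomial attached to $v$, reached only by the straight (unbent) broken line; this gives the implication ``vertex $\Rightarrow$ coefficient $1$''.

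The reverse implication, that an occurring non-vertex has coefficient $\ge 2$, is where I expect the real difficulty: it is false for arbitrary $F$-polynomials, whose Newton polytopes may contain interior lattice points of coefficient $1$, and must therefore use the special structure of the cells $L^{e,w_0}$. One route is an induction along a sequence of $2$- and $3$-term moves carrying $\ii$ to a word optimized for $i$, exactly as in the proof of Lemma \ref{prepare}, tracking both $\mathcal{N}_i$ and the coefficients under the mutations \eqref{X-mutation}: a coefficient exceeding $1$ can only be born from a binomial coefficient $\binom{m}{p}$ with $0<p<m$, produced by a mutation along an arrow of multiplicity $m\ge 2$ (such multiplicities being created by earlier mutations, even though $\Gamma_{\ii}$ itself is simply laced), and its exponent is then a strict convex combination of the two extremal exponents of that binomial. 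The crux, and the step I expect to be hardest, is to show that this local betweenness survives all later mutations and is never undone by cancellation in the (a priori merely rational) intermediate expressions, so that every coefficient-$>1$ monomial of the final polynomial remains a genuine convex combination of occurring coefficient-$1$ monomials. A more conceptual alternative, which would also answer the question of Remark \ref{ffei}, is to model the monomials of $\varsigma_{\ii,i}$ by combinatorial objects---the $\mathbf{i}$-trails of Section \ref{trails} in the minuscule case, or MV-polytope data in general---whose weight multiplicities are precisely the $a_{\mathfrak{m}}$, and to prove that such a weight lies on the boundary of the governing polytope if and only if it is attained with multiplicity one.
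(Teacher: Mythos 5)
The statement you are addressing is Conjecture \ref{mult2}; the paper does not prove it. It offers only supporting evidence (the $D_4$ example of Section \ref{exnonm}, where the unique coefficient-$2$ monomial of $\restr{W_2}{\mathcal{X}_{\ii}}$ produces the unique redundant inequality) and, in Remark \ref{ffei}, essentially the reduction you carry out: writing $\restr{W_k}{\mathcal{X}_{\ii}}=X_k(\ii)^{-1}F_k(\Sigma_{\ii})(X_1^{-1},\ldots,X_n^{-1})$, non-redundancy of the inequality attached to a monomial is equivalent to its exponent vector being a vertex of the Newton polytope of the $F$-polynomial. Your geometric reformulation is correct: by Lemma \ref{prepare}(1) and Proposition \ref{polynomial} the frozen coordinate equals $-1$ in every exponent vector, so a conic combination expressing $[\mathfrak{m}_0]_{trop}$ is automatically convex, and redundancy of $[\mathfrak{m}_0]_{trop}(x)\ge 0$ is equivalent to $v(\mathfrak{m}_0)$ failing to be a vertex of $\mathcal{N}_i$. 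Thus the honest content of your proposal is that Conjecture \ref{mult2} is equivalent to the dichotomy ``coefficient $1$ if and only if vertex'' for the monomials of $F_k(\Sigma_{\ii})$ --- which is Fei's conjecture \cite{F19} for these (generally non-acyclic) cluster algebras, exactly the ingredient the paper identifies as missing.

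Neither half of that dichotomy is actually established by your argument. For ``vertex $\Rightarrow$ coefficient $1$'' you assert that a vertex is reached only by the straight broken line; but the fact that a generic linear functional exposes a single monomial says nothing a priori about its coefficient, and the uniqueness of the contributing broken line at a vertex is precisely the nontrivial content of Fei's theorem, known only in the acyclic case, which does not cover the Berenstein--Fomin--Zelevinsky quivers $\Gamma_{\ii}$. For the converse you candidly leave the crux open: you give no mechanism ensuring that an exponent vector whose coefficient exceeds $1$ (created by a binomial expansion along a multiple arrow, or by two distinct terms colliding on the same exponent) remains a convex combination of occurring coefficient-$1$ exponent vectors after all subsequent mutations. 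Your side remark that the dichotomy ``is false for arbitrary $F$-polynomials'' contradicts Fei's conjecture as cited in the paper and is offered without a counterexample. In short, your reduction is sound and coincides with the paper's own Remark \ref{ffei}, but the proposal does not prove the conjecture; it relocates it to an equally open statement about $F$-polynomials.
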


\begin{remark}\label{ffei} Note that the coefficients of monomials of $\restr{W}{\mathcal{X}_{\ii}}$ are not visible in its tropicalization. Hence Conjecture \ref{mult2} suggests a criterion to determine facets of string cones which is not visible in the tropical version of the string cone inequalities. It would be very interesting to give a conceptual explanation of this. We suggest the following relation.

Recently Fei has showed in \cite{F19} for acyclic cluster algebras that the exponent vector of a monomial of the $F$-polynomial of any cluster variable gives rise to a vertex of its Newton polytope if and only if its coefficient is equal to $1$. He furthermore conjectures this to be true for any cluster algebras. Recall from the proof of Proposition \ref{polynomial} for $k\in I$
 \begin{equation*} \displaystyle\restr{W_k}{\mathcal{X}_{\ii}}=\prod_{i\in \tilde{M}}X_i(\ii)^{-c_{i,k}(\tilde{\Sigma}_k)}\prod_{j\in \tilde{M}}F_k(\Sigma_{\ii})(X^{-1}_1({\ii}),\ldots,X^{-1}_n(\ii))^{\left<e_j,e_k\right>_{\tilde\Sigma_k}},\end{equation*}
 where $\Sigma_k$ is an optimized seed. For our setup we may pick a seed $\Sigma_{\mathbf{j}}$ such that $\mathbf{j}\in W(w_0)$ and $j_N=k$. In this case the only arrow with target $v_k$ in $\Gamma_{\mathbf{j}}$ has source $v_{\ell}$ where $\ell^+=k$. Hence
  \begin{equation*} \displaystyle\restr{W_k}{\mathcal{X}_{\ii}}=X_{k}(\ii)^{-1}F_k(\Sigma_{\ii})(X^{-1}_1({\ii}),\ldots,X^{-1}_n(\ii)).\end{equation*}

Therefore the set of inequalities $[\restr{W_k}{\mathcal{X}_{\ii}}]_{trop}(t)\ge 0$ is non-redundant if and only if the exponent vector of every monomial of $F_k(\Sigma_{\ii})$ is a vertex of its Newton polytope. However, we are not aware of a relation between the exponents and the coefficients of the $F$-polynomial.

\end{remark}



\begin{thebibliography}{10}

\bibitem[BZ93]{BZ93}
Arkady Berenstein and Andrei Zelevinsky. 
\newblock String bases for quantum groups of type $A_r$. 
\newblock In I. M. Gelfand Seminar, volume 16 of Adv. Soviet Math., pages 51--89. Amer. Math. Soc., Providence, RI, 1993

\bibitem[BZ01]{BZ}
Arkady Berenstein and Andrei Zelevinsky.
\newblock Tensor product multiplicities, canonical bases and totally positive varieties. 
\newblock{\em Invent. Math.} 143, no. 1, 77--128, 2001.

\bibitem[BFZ05]{BFZ2}
Arkady Berenstein, Sergey Fomin, and Andrei Zelevinsky.
\newblock Cluster algebras. {III}. {U}pper bounds and double {B}ruhat cells.
\newblock {\em Duke Math. J.}, 126(1):1--52, 2005.

\bibitem[CKLP21]{CKLP}
Yunhyung Cho, Yoosik Kim, Eunjeong Lee, and Kyeong-Dong Park.
\newblock On the combinatorics of string polytopes,
\newblock {\em Journal of Combinatorial Theory, Series A}, Volume 184,105508, 2021.

\bibitem[FFL17]{FFL}
Xin Fang, Ghislain Fourier, and Peter Littelmann. 
\newblock On toric degenerations of flag varieties. 
\newblock Representation Theory - Current Trends and Perspectives,
edited by H. Krause et al, Series of Congress Reports, EMS, 2017.

\bibitem[F19]{F19}
Jiarui Fei.
\newblock Combinatorics of F-polynomials
\newblock Preprint. arXiv:1909.10151.

\bibitem[FG09]{FG} V. V. Fock, A. B. Goncharov.
\newblock Cluster ensembles, quantization and the dilogarithm.
\newblock {\em Annales Sci. de. l'Ecole Norm. Sup.} {\bf 42} 865--930, 2009.

\bibitem[FZ02]{FZ}
Sergey Fomin, and Andrei Zelevinsky.
\newblock Cluster algebras I: Foundations,
\newblock {\em J. Amer. Math. Soc.} 15 (2002), 497--529.
	
\bibitem[FZ07]{FZ2}
Sergey Fomin, and Andrei Zelevinsky.
\newblock Cluster algebras. IV. Coefficients.
\newblock {\em Compos. Math.}, 143 (2007), 112--164.

\bibitem[GHKK18]{GHKK}
Mark Gross, Paul Hacking, Sean Keel, and Maxim Kontsevich.
\newblock Canonical bases for cluster algebras.
\newblock \emph{J. Amer. Math. Soc.} 31 (2018), 497--608.

\bibitem[GKS21]{GKS21}
Volker Genz, Gleb Koshevoy and Bea Schumann.
\newblock Combinatorics of canonical bases revisited: Type A.
\newblock {\em Selecta Math. (N.S.)} 27, no. 4, Paper No. 67, 45 pp., 2021.

\bibitem[GKS20]{GKS20}
Volker Genz, Gleb Koshevoy and Bea Schumann.
\newblock Polyhedral parametrizations of canonical bases \& cluster duality. 
\newblock{\em Adv. Math.} 369, 2020.

\bibitem[GP00]{GP}
Oleg Gleizer and Alexander Postnikov.
\newblock Littlewood-{R}ichardson coefficients via {Y}ang-{B}axter equation.
\newblock {\em Internat. Math. Res. Notices}, (14):741--774, 2000.

\bibitem[KKN21]{KKN}
Yuki Kanakubo, Gleb Koshevoy and Toshiki Nakashima.
\newblock An algorithm for Berenstein-Kazhdan decoration functions and trails for minuscule representations.
\newblock Preprint. arXiv:2109.01997


\bibitem[K95]{Ka94}
Masaki Kashiwara.
\newblock On crystal bases.
\newblock {\em Representations of groups ({B}anff, {AB}, 1994)}, volume~16
  of {\em CMS 
  Conf. Proc.}, pages 155--197. Amer. Math. Soc., Providence, RI,
  1995.
  
  \bibitem[K]{Ke13}
  Bernhard Keller.
  \newblock Cluster algebras and derived categories.
 \newblock {\em Derived categories in algebraic geometry}, 2013, {\em EMS Ser. Congr. Rep.}, pages  123--183. Eur. Math. Soc., Z\"urich.
  
	
\bibitem[Lit98]{Lit}
Peter Littelmann.
\newblock Cones, crystals, and patterns.
\newblock {\em Transform. Groups}, 3(2):145--179, 1998.

\bibitem[P94]{P94}
Paolo Papi.
\newblock A characterization of a special ordering in a root system.
\newblock {\em Proc. Amer. Math. Soc.}, 120, no. 3, 661--665, 1994

\bibitem[SST18]{SST}
Ben Salisbury, Adam Schultze and Peter Tingley. 
\newblock Combinatorial descriptions of the crystal structure on certain PBW bases.
\newblock{\em Transform. Groups}, 23:501--525, 2018.
	\end{thebibliography}
	\end{document}